\definecolor{darkpastelgreen}{rgb}{0.01, 0.75, 0.24}
\definecolor{cadmiumgreen}{rgb}{0.0, 0.42, 0.24}
\definecolor{armygreen}{rgb}{0.29, 0.33, 0.13}
\DeclareMathOperator*{\argmin}{arg\,min}
\newtheorem{assumption}{Assumption}
\newtheorem{theorem}{Theorem}
\newtheorem{proposition}{Proposition}
\newtheorem{lemma}{Lemma}
\newtheorem{remark}{Remark}
\title{\textbf{Exploiting Local Convergence of Quasi-Newton Methods Globally: Adaptive Sample Size Approach}
\vspace{3mm}}
\author{Qiujiang Jin\thanks{Oden Institute for Computational Engineering and Sciences, The University of Texas at Austin, Austin, TX, USA. \{qiujiang@austin.utexas.edu\}.}\qquad  Aryan Mokhtari\thanks{Department of Electrical and Computer Engineering, The University of Texas at Austin, Austin, TX, USA. \{mokhtari@austin.utexas.edu\}.}}
\date{\empty}
\begin{document}

\maketitle

\begin{abstract}
\noindent In this paper, we study the application of quasi-Newton methods for solving empirical risk minimization (ERM) problems defined over a large dataset. Traditional deterministic and stochastic quasi-Newton methods can be executed to solve such problems; however, it is known that their global convergence rate may not be better than first-order methods, and their local superlinear convergence only appears towards the end of the learning process. In this paper, we use an adaptive sample size scheme that exploits the superlinear convergence of quasi-Newton methods globally and throughout the entire learning process. The main idea of the proposed adaptive sample size algorithms is to start with a small subset of data points and solve their corresponding ERM problem within its statistical accuracy, and then enlarge the sample size geometrically and use the optimal solution of the problem corresponding to the smaller set as an initial point for solving the subsequent ERM problem with more samples. We show that if the initial sample size is sufficiently large and we use quasi-Newton methods to solve each subproblem, the subproblems can be solved superlinearly fast (after at most three iterations), as we guarantee that the iterates always stay within a neighborhood that quasi-Newton methods converge superlinearly. Numerical experiments on various datasets confirm our theoretical results and demonstrate the computational advantages of our method.
\end{abstract}

\newpage

\section{Introduction}

In various machine learning problems, we aim to find an optimal model that minimizes the expected loss with respect to the probability distribution that generates data points, which is often referred to as expected risk minimization. In realistic settings, the underlying probability distribution of data is unknown and we only have access to a finite number of samples ($N$ samples) that are drawn independently according to the data distribution. As a result, we often settle for minimizing the sample average loss, which is also known as empirical risk minimization (ERM). The gap between expected risk and empirical risk which is known as generalization error is deeply studied in the statistical learning literature \cite{vapnik1998statistical,bousquet2002concentration,bartlett2006convexity,bousquet2008tradeoffs,frostig15}, and it is well-known that the gap between these two loss functions vanishes as the number of samples $N$ in the ERM problem becomes larger. 

Several variance reduced first-order methods have been developed to efficiently solve the ERM problem associated with a large dataset  \cite{roux2012stochastic,defazio2014saga,johnson2013accelerating,shalev2013stochastic,nguyen2017sarah,Allenzhu2017-katyusha,fang2018near,cutkosky2019momentum}. However, a common shortcoming of these \textit{first-order} methods is that their performance heavily depends on the problem condition number $\kappa:=L/\mu$, where $L$ is the risk function smoothness parameter and $\mu$ is the risk function strong convexity constant. As a result, these first-order methods suffer from slow convergence when the problem has a large condition number, which is often the case in datasets with thousands of features. In fact, the best-known bound achieved by first-order methods belongs to Katyusha \cite{Allenzhu2017-katyusha} which has a cost of $\mathcal{O}((N+\sqrt{\kappa N})\log{N})$ to achieve the statistical accuracy of an ERM problem with $N$ samples.

To resolve this issue, second-order methods such as Newton's method and quasi-Newton methods are often used, as they improve curvature approximation by exploiting second-order information. Several efficient implementations of second-order methods for solving large-scale ERM problems have been proposed recently, including incremental Newton-type methods \cite{gurbuzbalaban2015globally}, sub-sampled Newton algorithms~\cite{erdogdu2015convergence}, and stochastic quasi-Newton methods \cite{Schraudolph,mokhtari2014res,mokhtari2015global,MoritzNJ16,mokhtari2017iqn}. However, these methods still face two major issues. First, their global convergence requires selection of a small step size which slows down the convergence, or implementation of a line-search scheme which is computationally prohibitive for large-scale ERM problems, since it requires multiple passes over data. The second disadvantage of these (deterministic and stochastic) second-order methods is that their local quadratic or superlinear convergence only appears in a small local neighborhood of the optimal solution. This often means that, when we solve an ERM problem, the fast superlinear or quadratic convergence rate of these methods happens \textit{when the iterates have already reached the statistical accuracy of the problem} and the test accuracy would not improve, even if the ERM problem error (training error) decreases. 

To address these two shortcomings of second-order methods for solving large-scale ERM problems, the adaptive sample size Newton (Ada Newton) method was proposed in \cite{mokhtari2016adaptive}. In adaptive sample size methods, instead of solving the ERM corresponding to the full training set directly (as in deterministic methods) or using a subset of samples at each iteration (as in stochastic methods), we minimize a sequence of geometrically increasing ERM problems. Specifically, we solve the ERM problem corresponding to a subset of samples, e.g. $m$ samples, within their statistical accuracy, then we use the resulting solution as an initial point for the next ERM problem with $n=\alpha m$ samples ($\alpha > 1$), where the larger set with $n$ samples contains the smaller set with $m$ samples. The main principle of this idea is that all training samples are drawn from the same distribution, therefore the optimal values of the ERM problems with $m$ and $n$ samples are close to each other. The main result of Ada Newton in \cite{mokhtari2016adaptive} is that if one chooses the expansion factor $\alpha$ properly, the solution for the problem with $m$ samples is within the Newton quadratic convergence neighborhood of the next ERM problem with $n=\alpha m$ samples. Hence, each subproblem can be solved quickly by exploiting the quadratic convergence of Newton's method at every step of the learning process. This approach is promising as the overall number of gradient and Hessian computations for Ada Newton is $\mathcal{O}(N)$, while it requires computing  $\mathcal{O}(\log{N})$ Newton's directions. Note that $\mathcal{O}$ notation only hides absolute constants and these complexities are independent of problem condition number $\kappa$.  The main drawback of this approach, however, is the requirement of $\mathcal{O}(\log{N})$ Newton's directions computation, where each could cost $\mathcal{O}(d^3)$ arithmetic operations, where $d$ is the dimension. It also requires $\mathcal{O}(N)$ Hessian evaluations which requires $\mathcal{O}(Nd^2)$ arithmetic operations, and this could be computationally costly. 

A natural approach to reduce the computational cost of Newton's method is the use of quasi-Newton algorithms, in which, instead of exact computation of the Hessian and its inverse required for Newton update, we approximate the objective function Hessian and its inverse only by computing first-order information (gradients).  Since this approximation only requires matrix-vector product computations at each iteration, the computational complexity of quasi-Newton methods is reduced. It is known that quasi-Newton methods such as DFP and BFGS converge superlinearly in a neighborhood close to the optimal solution, i.e., the iterates $\{\mathbf{w}_k\}$ converge to the optimal solution $\mathbf{w}^*$ at a rate of $ \lim_{k \to +\infty}\frac{\|\mathbf{w}_{k+1} - \mathbf{w}^*\|}{\|\mathbf{w}_k - \mathbf{w}^*\|} = 0 $, where $k$ is the iteration number. To exploit quasi-Newton methods for the described adaptive sample size scheme, explicit superlinear convergence rate of these methods is needed to characterize the number of iterations required for solving each ERM problem. The non-asymptotic convergence rate of quasi-Newton methods was unknown until very recently, when a set of concurrent papers \cite{rodomanov2020rates,rodomanov2020ratesnew} and \cite{qiujiang2020quasinewton1}  showed that in a local neighborhood of the optimal solution, the iterates of DFP and BFGS converge to the optimal solution at a rate of $(1/k)^{k/2}$.

In this paper, we exploit the finite-time analysis of the quasi-Newton methods studied  in \cite{qiujiang2020quasinewton1} to develop an adaptive sample size quasi-Newton (AdaQN) method that reaches the statistical accuracy of the ERM corresponding to the full training set after $\mathcal{O}(N)$ gradient computations and $\mathcal{O}(\log{N})$ matrix-vector product evaluations and only $1$ matrix inversion. In Table~\ref{tab_1}, we formally compare the computation cost of our proposed AdaQN  algorithm and the Ada Newton method in \cite{mokhtari2016adaptive}. As shown in Table~\ref{tab_1}, the total number of gradient evaluations and matrix-vector product  computations for AdaQN are the same as the ones for Ada Newton, while the number of Hessian computations for AdaQN and Ada~Newton are  $m_0$ and $\mathcal{O}(N)$, respectively, where $m_0$ is the size of initial training set  in the adaptive sample size scheme, and it is substantially smaller than $N$, i.e., $m_0\ll N$. In our main theorem we show that $m_0$ is lower bounded by $\Omega\left(\max\left\{d, \kappa^2s\log{d}\right\}\right)$, where $s$ is a parameter determined by the loss function and the structures of the training data (see details in Remark~\ref{remark_1}). Hence, in this paper, we focus on the regime that the total number of samples satisfies $N \gg \max\left\{d, \kappa^2s\log{d}\right\}$. However, our numerical experiments in Section~\ref{sec:experiments} show that AdaQN often achieves small risk for $m_0$ much smaller than this lower bound and the effectiveness of AdaQN goes beyond this regime. Also, the number of times that Ada Newton requires to compute the inverse of a square matrix (or solving a linear system) is $\mathcal{O}(\log{N})$, while to implement AdaQN we only require one matrix inversion. We should also add that the implementation of Ada Newton requires a backtracking mechanism for the choice of growth factor $\alpha$. This backtracking scheme is needed to ensure that the size of training set does not grow too fast, and the iterates stay within the quadratic convergence neighborhood Newton's method for next ERM problem. Unlike Ada~Newton, our proposed AdaQN method does not require such backtracking scheme, and as long as the size of initial training set $m_0$ is sufficiently large, we can double the size of training set at the end of each phase, i.e., we set $\alpha=2$.

It is worth noting that in \cite{DMKVW18}, the authors study a similar approach to our  proposed adaptive sample size quasi-Newton method called the batch-expansion training (BET) method. The BET method differs from our proposed framework in terms of algorithm development and convergence analysis. Specifically, the BET method uses a similar idea of geometrically increasing the size of the training set, while using the limited-memory BFGS (L-BFGS) method for solving the Empirical Risk Minimization (ERM) subproblems. The authors leverage the linear convergence rate of L-BFGS to characterize the overall complexity of the BET algorithm. We should mention that since the linear convergence rate of L-BFGS is not provably better than first-order methods, the overall iteration complexity of BET is similar to the one for adaptive sample size first-order methods \cite{mokhtari2017first} which depends on the problem condition number. In contrast to \cite{DMKVW18}, in this paper, we leverage the local \textit{superlinear} convergence rate of the BFGS method to improve the overall complexity of adaptive sample size first-order methods, and hence, it also improves the complexity of the BET method.

\begin{table*}[t]
  \caption{Comparison of adaptive sample size methods, in terms of number of gradient computations, Hessian computations, matrix-vector product evaluations and matrix inversions.}
  \centering
  \begin{tabular}{ |c|c|c|c|c|}
    \hline
   Algorithm & gradient comp. & Hessian comp. & matrix-vec product & matrix inversion  \\
    \hline
    \hline
   Ada Newton  &$ \mathcal{O}(N )$ & $\mathcal{O}(N )$ & $\mathcal{O}(\log{N})$ & $\mathcal{O}(\log{N})$ \\
    \hline
    AdaQN & $\mathcal{O}(N)$ & $m_0$ & $\mathcal{O}(\log{N})$  & 1  \\
    \hline
  \end{tabular}
  \label{tab_1}
\end{table*}

\section{Problem formulation}

In this paper, we  focus on minimizing large-scale empirical risk minimization (ERM) problems. Consider the loss function $f: \mathbb{R}^d\times \mathcal{Z}\to  \mathbb{R}$ that takes inputs as the decision variable $\mathbf{w} \in \mathbb{R}^d$ and random variable $\mathbf{Z}\in \mathcal{Z}$ with a probability distribution $P$. The expected risk minimization problem aims at minimizing the expected loss over all possible choices of $\mathbf{Z}$, 
\begin{equation}\label{eq:exp_risk_min}
    \mathbf{w}^* := \argmin_{\mathbf{w}} R(\mathbf{w}) = \argmin_{\mathbf{w}} \mathbb{E}_{\mathbf{Z}\sim  P}[f(\mathbf{w}; \mathbf{Z})].
\end{equation}
Note that in this expression we defined the expected risk $R:\mathbb{R}^d \to \mathbb{R}$ as $R(\mathbf{w}) : = \mathbb{E}_{\mathbf{Z}\sim  P}[f(\mathbf{w}; \mathbf{Z})]$ and  $\mathbf{w}^*$ as an optimal solution of the expected risk minimization. In this paper we focus on loss functions $f$ that are strongly convex with respect to $\mathbf{w}$, hence the optimal solution $\mathbf{w}^*$ is unique. 

The probability distribution $P$ is often unknown, and we only have access to a finite number of realizations of the random variable~$\mathbf{Z}$, which we refer to as our training set $\mathcal{T} = \{\mathbf{z}_1,\dots, \mathbf{z}_N\}$. These $N$ realizations are assumed to be drawn independently and according to $P$. Hence, instead of minimizing the expected risk  in \eqref{eq:exp_risk_min}, we attempt to minimize the empirical risk corresponding to the training set $\mathcal{T} = \{\mathbf{z}_1,\dots, \mathbf{z}_N\}$. To formalize this, let us first define $\mathcal{S}_n = \{\mathbf{z}_1,\dots, \mathbf{z}_n\}$ as a subset of the training set $\mathcal{T}$ which contains its first $n$ elements. Without loss of generality we defined an ordering for the elements of the training set  $\mathcal{T}$. The empirical risk corresponding to the set $\mathcal{S}_n$ is defined as $R_n(\mathbf{w}) := \frac{1}{n}\sum_{i = 1}^{n}f(\mathbf{w}; \mathbf{z}_i)$ and its optimal solution $\mathbf{w}^*_n$ is given by
\begin{equation}\label{ERM_problem}
    \mathbf{w}^*_n := \argmin_{\mathbf{w}} R_n(\mathbf{w}) = \argmin_{\mathbf{w}} \frac{1}{n}\sum_{i = 1}^{n}f(\mathbf{w}; \mathbf{z}_i).
\end{equation}
Note that the ERM problem associated with the full training set $\mathcal{T}$ is a special case of  \eqref{ERM_problem} when $n=N$, and its unique optimal solution is denoted by $ \mathbf{w}^*_N $.

The gap between the optimal values of empirical risk $R_n$ and expected risk $R$ is well-studied in the statistical learning literature and here we assume the following upper bound holds
\begin{equation}\label{diff_function}
    \mathbb{E}\left[|R_n(\mathbf{w}^*_n) - R(\mathbf{w}^*)|\right] \leq V_n,
\end{equation}
where $V_n$ is a function of the sample size $n$ that approaches zero as $n$ becomes large. The expectation in \eqref{diff_function} is over the set $\mathcal{S}_n$. In this paper, we assume that all the expectations are with respect to the corresponding training set. Classic results have established bounds of the form $V_n = \mathcal{O}({1}/{\sqrt{n}})$ \citep{vapnik1998statistical,lee1998importance} and other recent papers including \cite{bousquet2002concentration,bartlett2006convexity,bousquet2008tradeoffs,frostig15} show that under stronger regularity conditions such as strong convexity, we have $V_n = \mathcal{O}({1}/{n})$, for $n = \Omega(\kappa^2\log{d})$. In this paper, we assume the requirements for the bound $V_n = \mathcal{O}({1}/{n})$ are satisfied. 

Since the gap between the optimal empirical and expected risks is always bounded above by the error $V_n$, there is no point to reducing the optimization error of minimizing $R_n$ beyond the statistical accuracy $V_n$. In other words, if we obtain a solution $\hat{\mathbf{w}}$ such that $\mathbb{E}\left[R_n(\hat{\mathbf{w}}) - R_n(\mathbf{w}^*_n)\right] = \mathcal{O}( V_n)$, there would be no benefit in further minimizing $R_n$. Due to this observation, we say $\hat{\mathbf{w}}$ solves the ERM in \eqref{ERM_problem} within its \textit{statistical accuracy} if $\mathbb{E}\left[R_n(\hat{\mathbf{w}}) - R_n(\mathbf{w}^*_n)\right] \leq V_n$. The ultimate goal is to efficiently find a solution $\mathbf{w}_N$ that reaches the statistical accuracy of the full training set~$\mathcal{T}$, i.e., $\mathbb{E}\left[R_N(\mathbf{w}_N) - R_N(\mathbf{w}^*_N)\right] \leq V_N$. Next, we state the notations and assumptions.

\begin{assumption}\label{assum_1}
For all values of $\mathbf{z}$, the loss function $f(\mathbf{w}; \mathbf{z})$ is twice differentiable and $\mu$-strongly convex with respect to $\mathbf{w}$ and its gradient is Lipschitz continuous with parameter $L > 0$. 
\end{assumption}

\begin{assumption}\label{assum_2}
For all values of $\mathbf{z}$, the loss function  $f(\mathbf{w}; \mathbf{z})$ is self-concordant with respect to $\mathbf{w}$. 
\end{assumption}

Assumptions \ref{assum_1} and \ref{assum_2} are customary for the analysis of quasi-Newton methods. These conditions also imply that the empirical risk $R_n$ is also self-concordant, strongly convex with $\mu$, and its gradient is Lipschitz continuous with $L$. Hence, the condition number of $R_n$ is $\kappa := {L}/{\mu}$.

\paragraph{Computational cost notation} We report the overall computational cost in terms of these parameters: (i)~$\tau_{grad}$ and $\tau_{Hess}$ which denote the cost of computing one gradient of size $d$ and one Hessian of size $d\times d$; (ii)~$\tau_{prod}$ which indicates the cost of computing the product of a square matrix of size $d\times d$ with a vector of size $d$; and (iii)~$\tau_{inv}$ which denotes the cost of computing the inverse of a square matrix of size $d\times d$ or the cost of solving a linear system with $d$ variables and $d$ equations.

\section{Algorithm}

\paragraph{Quasi-Newton methods} Before introducing our method, we first briefly recap the update of quasi-Newton (QN) methods. Given the current iterate $\mathbf{w}$, the QN update for the ERM problem in (\ref{ERM_problem}) is
\begin{equation}\label{QN_update}
    \mathbf{w}^+ = \mathbf{w} - \eta\ \! \mathbf{H}\ \nabla{R_n(\mathbf{w})},
\end{equation}
where $\eta>0$ is a step size and $\mathbf{H} \in \mathbb{R}^{d \times d}$ is a symmetric positive definite matrix approximating the Hessian inverse $\nabla^2{R_n(\mathbf{w})}^{-1}$. The main goal of quasi-Newton schemes is to ensure that the matrix $\mathbf{H}$ always stays close to $\nabla^2{R_n(\mathbf{w})}^{-1}$. There are several different approaches for updating the Hessian approximation matrix $\mathbf{H}$, but the two most-widely used updates are the DFP method defined as
\begin{equation}\label{DFP_update}
    \mathbf{H}^{+} = \mathbf{H} - \frac{\mathbf{H} \mathbf{y}\mathbf{y}^\top \mathbf{H}}{\mathbf{y}^\top \mathbf{H} \mathbf{y}} + \frac{\mathbf{s} \mathbf{s}^\top}{\mathbf{s}^\top \mathbf{y}},
\end{equation}
and the BFGS update defined as
\begin{equation}\label{BFGS_update}
   \mathbf{H}^{+}  = \left(\mathbf{I}-\frac{\mathbf{s} \mathbf{y}^\top}{\mathbf{s}^\top \mathbf{y}}\right) \mathbf{H} \left(\mathbf{I}-\frac{ \mathbf{y} \mathbf{s}^\top}{\mathbf{s}^\top \mathbf{y}}\right) +\frac{\mathbf{s}\mathbf{s}^\top}{\mathbf{s}^\top \mathbf{y}},
\end{equation}
where $\mathbf{s}: = \mathbf{w}^+ - \mathbf{w}$ is the variable variation and $\mathbf{y} : = \nabla{R_n(\mathbf{w}^+)} - \nabla{R_n(\mathbf{w})}$ is the gradient variation. If we follow the updates in (\ref{DFP_update}) or (\ref{BFGS_update}), then finding the new Hessian inverse approximation and consequently the new descent direction $-\mathbf{H}^{+} \nabla R_n(\mathbf{w^+})$ only require computing a few matrix-vector multiplications. Considering this point and the fact that each step of BFGS or DFP requires $n$ gradients evaluations, the computational cost of each step of BFGS and DFP is $\mathcal{O}(n \tau_{grad} + \tau_{prod})$. 

\subsection{Adaptive sample size quasi-Newton algorithm (AdaQN)}

BFGS, DFP or other quasi-Newton (QN) methods can be used to solve the ERM problem corresponding to the training set $\mathcal{T}$ with $N$ samples, but (i) the cost per iteration would be of $\mathcal{O}(N \tau_{grad} + \tau_{prod})$, (ii) the superlinear convergence only appears towards the end of learning process when the iterates approach the optimal solution and statistical accuracy is already achieved, and (iii)~they require a step size selection policy for their global convergence. To resolve the first issue and reduce the high computational cost of $\mathcal{O}(N \tau_{grad} + \tau_{prod})$, one could use stochastic or incremental QN methods that only use a subset of samples at each iteration; however, stochastic or incremental QN methods (similar to deterministic QN algorithms) outperform first-order methods only \textit{when the iterates are close to the optimal solution} and they also require \textit{line-search} schemes for selecting the step size. Hence, none of these schemes is able to exploit fast superlinear convergence rate of QN methods throughout the entire training process, while always using a constant step size of $1$.

\begin{figure}[t!]
  \centering
    \includegraphics[width=0.4\linewidth]{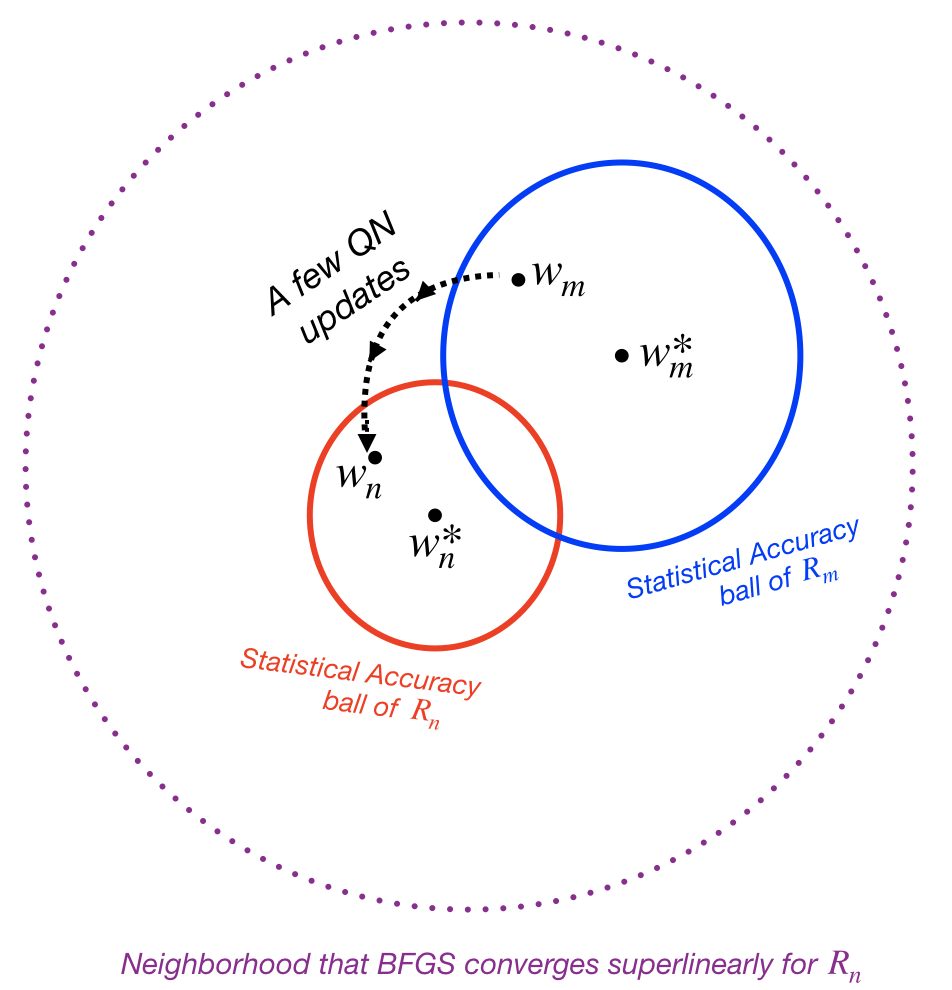}
  \caption{The interplay between superlinear convergence of BFGS and statistical accuracy  in AdaQN.}\label{diagram_ada_qn}
\end{figure}

Next, we introduce the adaptive sample size quasi-Newton (AdaQN) method that  addresses these drawbacks. Specifically, (i) AdaQN does not require any line-search scheme and (ii) exploits the superlinear rate of QN methods throughout the entire training process.  In a nutshell, AdaQN leverages the interplay between the statistical accuracy of ERM problems and superlinear convergence neighborhood of QN methods to solve ERM problems efficiently. It uses the fact that all samples are drawn from the same distribution, therefore the solution for ERM with $m$ samples is not far from the solution of ERM with $n$ samples, where $n$ samples contain the $m$ samples. Hence, the solution of the problem with less samples can be used as a warm-start for the problem with more samples. Note that there are two important points here that we should highlight. First, if $m$ and $n$ are chosen properly, the solution of the ERM problem $R_m$ corresponding to the set $ \mathcal{S}_m$ with $m$ samples will be in the superlinear convergence neighborhood of the next ERM problem corresponding to the set   $ \mathcal{S}_n$ with $n$ samples, where $ \mathcal{S}_m\subset \mathcal{S}_n$. Hence, each subproblem can be solved with a few iterations of quasi-Newton methods. Second, in each subproblem we only use a subset of samples, therefore the cost of running QN methods for solving subproblems with $m$ samples (where $m\ll N$) is significantly less than the update of QN methods for the full training set.

As shown in Figure~\ref{diagram_ada_qn}, in AdaQN we need to ensure that the approximate solution of the ERM problem with $m$ samples denoted by $\mathbf{w}_m$ is within the superlinear convergence neighborhood of BFGS for the ERM problem with $n=2m$ samples. Here, $\mathbf{w}_m^*$ and $\mathbf{w}_n^*$ are the optimal solutions of the risks $R_m$ and $R_n$ corresponding to the sets $\mathcal{S}_m$ and $\mathcal{S}_n$ with $m$ and $n$ samples, respectively, where $\mathcal{S}_m\subset \mathcal{S}_n$. The statistical accuracy region of $R_m$ is denoted by a blue circle, the statistical accuracy region of $R_n$ is denoted by a red circle, and the superlinear convergence neighborhood of BFGS for $R_n$ is denoted by a dotted purple circle. As we observe, any point within the statistical accuracy of $\mathbf{w}_m^*$ is within the superlinear convergence neighborhood of BFGS for $R_n$. Therefore, after a few steps (at most three steps as we show in our theoretical results) of BFGS, we find a new solution $\mathbf{w}_n$ that is within the statistical accuracy of $R_n$.

The steps of AdaQN are outlined in Algorithm \ref{algo_ada}. We start with a small subset of the full training set with  $m_0$ samples and solve its corresponding ERM problem within its statistical accuracy. The initial ERM problem can be solved using any iterative method and its cost will be negligible as it scales with $m_0$ instead of $N$, where $m_0 \ll N$. In the main loop (Step~2-Step~12) we implement AdaQN. Specifically, we first use the solution from the previous round (or $\mathbf{w}_{m_0}$ when $n = m_0$) as the initial iterate, while we set the initial Hessian inverse approximation as $\hat{ \mathbf{H}} = \mathbf{H}_{m_0}=\nabla^2{R_{m_0}(\mathbf{w}_{m_0})}^{-1}$ (Step~3). Then, we double the size of the training set by adding more samples to the active training set (Step~4). In Steps 5-10 we run the BFGS update for minimizing the loss $R_n$, while we keep updating the iterates and Hessian inverse approximation. Once, the required condition for convergence specified in Step~5 is obtained, we output the iterate $\mathbf{w}_n$ as the iterate that minimizes $R_n$ within its statistical accuracy. In Step~8 we used the update for BFGS, but one can simply replace this step with the update of the DFP method. As we ensure that iterates always stay within the neighborhood that BFGS (or DFP) converges superlinearly, the step size of these methods can be set as $\eta=1$. We repeat this procedure until we reach the whole dataset $\mathcal{T}$ with $N$ samples. In Algorithm \ref{algo_ada}, for the phase that we minimize $R_n$ (Steps 5-10), our goal is to find a solution $\mathbf{w}_n$ that satisfies $\mathbb{E}\left[R_n(\mathbf{w}_n) - R_n(\mathbf{w}^*_n)\right] \leq V_n$.  We use $t_n$ to indicate the maximum number of QN updates to find such a solution. Our theoretical result (Theorem~\ref{theorem_1}) suggests that due to fast convergence of QN methods, at most $t_n=3$ iterations required to solve each subproblem within its statistical accuracy. 

\begin{algorithm}[t!]
\caption{AdaQN}\label{algo_ada}
\begin{flushleft}
\vspace{-4mm}
    \textbf{Input: } The initial sample size $m_0$; The initial argument $\mathbf{w}_{m_0}$ within the statistical accuracy of $R_{m_0}$; The initial Hessian inverse $\mathbf{H}_{m_0} = \nabla^2{R_{m_0}(\mathbf{w}_{m_0})}^{-1}$;
\end{flushleft}
\vspace{-2mm}
\begin{algorithmic}[1] {
\STATE Set  $n\leftarrow m_0$;
\WHILE {$n \leq N$}
    \STATE Set the initial argument $\hat{\mathbf{w}}\leftarrow \mathbf{w}_n$, the initial matrix $\hat{ \mathbf{H}} \leftarrow \mathbf{H}_{m_0} $ and $counter=1$;
    \STATE Increase the sample size: $n \leftarrow \min\{2n ,N\}$;
    \WHILE{$counter\leq t_n$}
        \STATE $\hat{\mathbf{w}}^+ \leftarrow \hat{\mathbf{w}} - \hat{ \mathbf{H}}  \ \nabla{R_n(\hat{\mathbf{w}})}$;
        \STATE $\mathbf{s} \leftarrow\hat{\mathbf{w}}^+ - \hat{\mathbf{w}}, \  \mathbf{y} \leftarrow \nabla{R_n(\hat{\mathbf{w}}^+)}- \nabla{R_n(\hat{\mathbf{w}})}$;
        \STATE $\hat{\mathbf{H}}^{+}  = \left(\mathbf{I}-\frac{\mathbf{s} \mathbf{y}^\top}{\mathbf{s}^\top \mathbf{y}}\right) \hat{\mathbf{H}} \left(\mathbf{I}-\frac{ \mathbf{y} \mathbf{s}^\top}{\mathbf{s}^\top \mathbf{y}}\right) +\frac{\mathbf{s}\mathbf{s}^\top}{\mathbf{s}^\top \mathbf{y}}$;
        \STATE Set $\hat{\mathbf{w}} \leftarrow \hat{\mathbf{w}}^+$, $\hat{\mathbf{H}}\leftarrow \hat{\mathbf{H}}^{+}$ and  $counter \leftarrow counter +1$;
    \ENDWHILE
    \STATE Set $\mathbf{w}_n \leftarrow \hat{\mathbf{w}}$; 
\ENDWHILE }
\end{algorithmic}
\end{algorithm}

In Figure~\ref{diagram_ada_qn_2}, we illustrate the sequential steps of AdaQN moving from one stage to another stage. Note that in the initialization step we solve the first ERM problem with $m_0$ up to its statistical accuracy to find an approximate solution $\mathbf{w}_{m_0}$. Then, we compute the Hessian $\nabla^2 R_{m_0}(\mathbf{w}_{m_0})$ and its inverse $\nabla^2 R_{m_0}(\mathbf{w}_{m_0})^{-1}$. Once the initialization step is done, we perform BFGS updates on the loss of ERM problem with $2m_0$ samples starting from the point $\mathbf{w}_{m_0}$ and using the initial Hessian inverse approximation $\nabla^2 R_{m_0}(\mathbf{w}_{m_0})^{-1}$. Then, after three BFGS updates we find $\mathbf{w}_{2m_0}$ that is within the statistical accuracy of $R_{2m_0}$. In the next stage, with $4m_0$ samples, we use the original Hessian inverse approximation $\nabla^2 R_{m_0}(\mathbf{w}_{m_0})^{-1}$ and the new variable $\mathbf{w}_{2m_0}$ for the BFGS updates. We keep doing this procedure until the size of the training set becomes $N$. As we observe, we only perform $m_0$ Hessian computations to find $\nabla^2 R_{m_0}(\mathbf{w}_{m_0})$ and one matrix inversion to find its inverse $\nabla^2 R_{m_0}(\mathbf{w}_{m_0})^{-1}$.

\begin{figure}[t]
  \centering
    \includegraphics[width=0.9\linewidth]{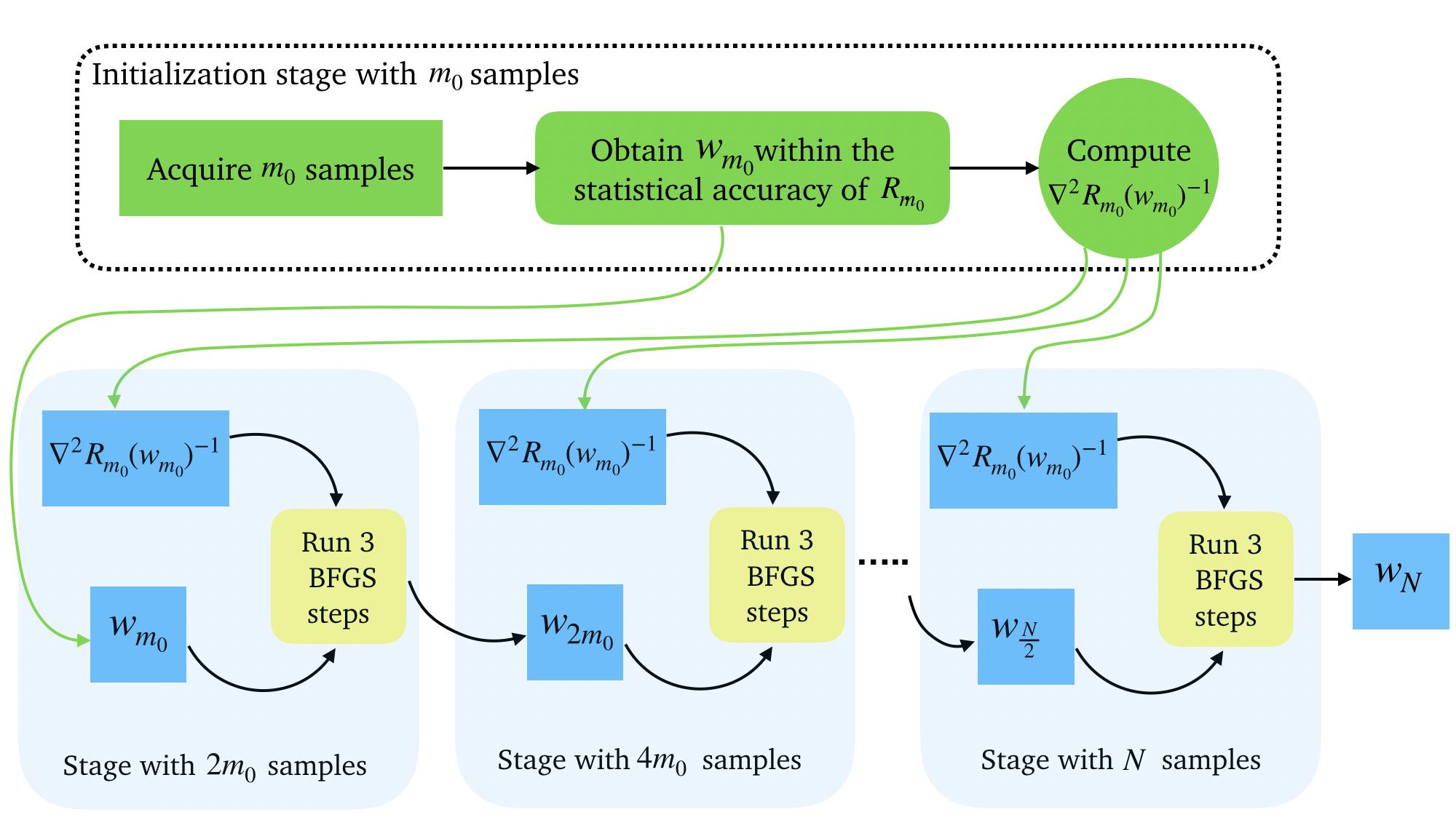}
  \caption{The phases of AdaQN.}\label{diagram_ada_qn_2}
\end{figure}

\begin{remark}\normalfont{
Note that a natural choice for the initial Hessian inverse approximation at each phase of AdaQN with $n$ samples is the Hessian inverse at the initial iterate of that phase, i.e., setting the initial Hessian inverse approximation as $\hat{\mathbf{H}} = \nabla^2{R_{n}(\mathbf{w}_{m})}^{-1}$, where $\mathbf{w}_m$ is the solution of the previous phase with $m=n/2$ samples. However, if we follow this initialization, then for each phase of AdaQN  we need to compute $n$ new Hessians to evaluate $\nabla^2{R_{n}(\mathbf{w}_{m})}$ and one matrix inversion to find $\nabla^2{R_{n}(\mathbf{w}_{m})}^{-1}$, which would increase the computational cost of AdaQN. To avoid this issue, for all values of $n$, we always use the initial Hessian inverse approximation matrix corresponding to the first phase with $m_0$ samples and set $\hat{ \mathbf{H}} = \mathbf{H}_{m_0}=\nabla^2{R_{m_0}(\mathbf{w}_{m_0})}^{-1}$ (Step~3). We show that even under this initialization the required condition for superlinear convergence of DFP or BFGS is always satisfied if the initial sample size $m_0$ is large enough. Note that by following this scheme, we only need to compute $m_0$ Hessians and a single matrix inversion to implement AdaQN. This is a significant gain compared to Ada Newton in \citep{mokhtari2016adaptive}, which requires computing $\mathcal{O}(N)$ Hessians and $\mathcal{O}(\log{N})$ matrix inversions.}
\end{remark}

\section{Convergence analysis}

In this section, we characterize the overall complexity of AdaQN. We only state the results for BFGS defined in \eqref{BFGS_update}, as the proof techniques and overall complexity bounds for adaptive sample size versions of DFP and BFGS are similar. First we state an upper bound for the sub-optimality of the variable $\mathbf{w}_m$ with respect to the empirical risk of $R_n$, given that $\mathbf{w}_m$ has achieved the statistical accuracy of the previous empirical risk $R_m$.

\begin{proposition}\label{proposition_1}
Consider $S_m$ and $S_n$ such that $S_m \subset S_n \subset \mathcal{T}$, where there are $m$ samples in $S_m$ and $n$ samples in $S_n$ and $n \geq m$. Consider the corresponding empirical risk functions $R_m$ and $R_n$ defined based on $S_m$ and $S_n$, respectively. Assume that $\mathbf{w}_m$ solves the ERM problem of  $R_m$ within its statistical accuracy, i.e., $\mathbb{E}\left[R_m(\mathbf{w}_m) - R_m(\mathbf{w}^*_m)\right] \leq V_m$. Then we have
\begin{equation}\label{proposition_1_1}
    \mathbb{E}\left[R_n(\mathbf{w}_m) - R_n(\mathbf{w}^*_n)\right] \leq 3V_m.
\end{equation}
\end{proposition}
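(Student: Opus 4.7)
The plan is to pivot through the population optimum $\mathbf{w}^*$ and exploit the fact that the $n-m$ samples in $\mathcal{S}_n\setminus\mathcal{S}_m$ are drawn independently of $\mathcal{S}_m$, and hence of $\mathbf{w}_m$. Writing $R'(\mathbf{w}) := \tfrac{1}{n-m}\sum_{i=m+1}^n f(\mathbf{w};\mathbf{z}_i)$ for the empirical risk over the fresh samples, we have the convex-combination identity $R_n = \tfrac{m}{n}R_m + \tfrac{n-m}{n}R'$. I would then split
\begin{equation*}
R_n(\mathbf{w}_m) - R_n(\mathbf{w}_n^*) = \bigl[R_n(\mathbf{w}_m) - R(\mathbf{w}^*)\bigr] + \bigl[R(\mathbf{w}^*) - R_n(\mathbf{w}_n^*)\bigr]
\end{equation*}
and handle the two brackets separately, aiming to show each contributes an additive $V_m$-order term.

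The right bracket is the easy one: taking expectation, (\ref{diff_function}) gives $\mathbb{E}[R(\mathbf{w}^*) - R_n(\mathbf{w}_n^*)]\le V_n$, and since $V$ is decreasing in the sample size and $n\ge m$ this is $\le V_m$. For the left bracket I would condition on $\mathcal{S}_m$ and use $\mathbb{E}[R'(\mathbf{w}_m)\mid\mathcal{S}_m] = R(\mathbf{w}_m)$, which follows from $\mathbf{w}_m$ being $\mathcal{S}_m$-measurable and the residual samples being fresh iid draws. This produces
\begin{equation*}
\mathbb{E}\bigl[R_n(\mathbf{w}_m)-R(\mathbf{w}^*)\bigr] = \tfrac{m}{n}\,\mathbb{E}\bigl[R_m(\mathbf{w}_m)-R(\mathbf{w}^*)\bigr] + \tfrac{n-m}{n}\,\mathbb{E}\bigl[R(\mathbf{w}_m)-R(\mathbf{w}^*)\bigr].
\end{equation*}
The first summand is controlled by inserting $R_m(\mathbf{w}_m^*)$: the hypothesis on $\mathbf{w}_m$ gives $\mathbb{E}[R_m(\mathbf{w}_m)-R_m(\mathbf{w}_m^*)]\le V_m$, and since $R_m(\mathbf{w}_m^*)\le R_m(\mathbf{w}^*)$ pointwise while $\mathbb{E}[R_m(\mathbf{w}^*)]=R(\mathbf{w}^*)$, we have $\mathbb{E}[R_m(\mathbf{w}_m^*)-R(\mathbf{w}^*)]\le 0$, so this summand is at most $V_m$.

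The main obstacle is the second summand $\mathbb{E}[R(\mathbf{w}_m)-R(\mathbf{w}^*)]$, an excess \emph{population} risk bound for the near-ERM iterate $\mathbf{w}_m$; this does not follow directly from (\ref{diff_function}), which pins down only $|R_n(\mathbf{w}_n^*)-R(\mathbf{w}^*)|$. I would pivot once more through $R_m(\mathbf{w}_m)$, writing it as $[R(\mathbf{w}_m)-R_m(\mathbf{w}_m)] + [R_m(\mathbf{w}_m)-R(\mathbf{w}^*)]$, and appeal to the same uniform-convergence-under-strong-convexity results that justify (\ref{diff_function}), applied in the form $\mathbb{E}[|R(\mathbf{w}_m)-R_m(\mathbf{w}_m)|]\le V_m$ for any $\mathbf{w}_m$ within statistical accuracy; combined with the $V_m$ bound on the second term this gives $\mathbb{E}[R(\mathbf{w}_m)-R(\mathbf{w}^*)]\le 2V_m$. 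Substituting back yields $\mathbb{E}[R_n(\mathbf{w}_m)-R(\mathbf{w}^*)]\le \tfrac{m}{n}V_m+\tfrac{n-m}{n}(2V_m)\le 2V_m$, and adding the $V_m$ from the second bracket gives the claimed $\mathbb{E}[R_n(\mathbf{w}_m)-R_n(\mathbf{w}_n^*)]\le 3V_m$.
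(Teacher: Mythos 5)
Your proposal reaches the $3V_m$ bound by a genuinely different route than the paper, and it is correct up to one invoked lemma that deserves scrutiny. The paper's proof telescopes the left-hand side of \eqref{proposition_1_1} into four differences,
\begin{equation*}
\bigl[R_n(\mathbf{w}_m) - R_m(\mathbf{w}_m)\bigr] + \bigl[R_m(\mathbf{w}_m) - R_m(\mathbf{w}^*_m)\bigr] + \bigl[R_m(\mathbf{w}^*_m) - R(\mathbf{w}^*)\bigr] + \bigl[R(\mathbf{w}^*) - R_n(\mathbf{w}^*_n)\bigr],
\end{equation*}
bounds the last three in expectation by $V_m$ each (the hypothesis on $\mathbf{w}_m$ and two applications of \eqref{diff_function}), and kills the first by writing $\mathbb{E}[R_n(\mathbf{w}_m) - R_m(\mathbf{w}_m)] = R(\mathbf{w}_m) - R(\mathbf{w}_m) = 0$, i.e., by treating the data-dependent point $\mathbf{w}_m$ as if it were deterministic. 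You instead pivot through $R(\mathbf{w}^*)$, use the convex-combination identity $R_n = \frac{m}{n}R_m + \frac{n-m}{n}R'$, and condition on $S_m$ so that $\mathbb{E}[R'(\mathbf{w}_m)\mid S_m] = R(\mathbf{w}_m)$ holds rigorously for the fresh samples. Your bound $\mathbb{E}[R_m(\mathbf{w}^*_m) - R(\mathbf{w}^*)] \leq 0$ (optimality of $\mathbf{w}^*_m$ plus unbiasedness of $R_m$ at the deterministic point $\mathbf{w}^*$) is also sharper than the paper's, which spends a full $V_m$ on that term via \eqref{diff_function}; this saving is exactly what lets you absorb an extra generalization term and still land on $3V_m$.

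The one genuine divergence is your appeal to $\mathbb{E}[|R(\mathbf{w}_m) - R_m(\mathbf{w}_m)|] \leq V_m$ at the data-dependent, approximately optimal point $\mathbf{w}_m$. This does not follow from \eqref{diff_function}, which constrains only the optimal values $R_m(\mathbf{w}^*_m)$, $R_n(\mathbf{w}^*_n)$ and $R(\mathbf{w}^*)$; it is an additional uniform-convergence or stability hypothesis (it does hold under the strong-convexity conditions in the references cited for $V_n = \mathcal{O}(1/n)$, but it is not among the paper's stated assumptions). Judged strictly against the stated assumptions, this step is a gap. You should know, however, that the paper's own proof hides precisely the same gap: since $R_n - R_m = \frac{n-m}{n}(R' - R_m)$, its claim that $\mathbb{E}[R_n(\mathbf{w}_m) - R_m(\mathbf{w}_m)] = 0$ is equivalent to asserting $\mathbb{E}[R(\mathbf{w}_m) - R_m(\mathbf{w}_m)] = 0$, and the expected generalization gap at a point fitted on $S_m$ is not zero in general. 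Indeed, patching the paper's argument with your bound while keeping its other estimates yields $4V_m$, not $3V_m$; recovering the stated constant requires your sharper treatment of the third term. So your proposal is not weaker than the paper's argument---it makes explicit, and pays for, a step the paper performs silently.
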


Proposition \ref{proposition_1} characterizes the sub-optimality of the variable $\mathbf{w}_m$ for the empirical risk $R_n$ which plays a fundamental role in our analysis. We use this bound later to show $\mathbf{w}_m$ is close enough to $\mathbf{w}^*_n$ so that $\mathbf{w}_m $ is in the superlinear convergence neighborhood of $R_n$.

Next, we require a bound on the number of iterations needed by BFGS to solve a subproblem, when the initial iterate is within the superlinear convergence neighborhood. We establish this bound by leveraging the result of \cite{qiujiang2020quasinewton1} which provides a non-asymptotic superlinear convergence for BFGS. 

\begin{proposition}\label{proposition_2}
Consider AdaQN in the phase that the active training set contains $n$ samples. If Assumptions \ref{assum_1}-\ref{assum_2} hold and the initial iterate $\mathbf{w}_m$ and Hessian approximation $\nabla^2{R_{m_0}(\mathbf{w}_{m_0})}$ satisfy
\begin{equation}\label{proposition_2_1}
\begin{split}
    & \|\nabla^2{R_n}(\mathbf{w}^*_n)^{\frac{1}{2}}(\mathbf{w}_m - \mathbf{w}^*_n)\| \leq \frac{1}{300},\\
    & \|\nabla^2{R_n}(\mathbf{w}^*_n)^{-\frac{1}{2}}[\nabla^2{R_{m_0}(\mathbf{w}_{m_0})} - \nabla^2{R_n}(\mathbf{w}^*_n)]\nabla^2{R_n}(\mathbf{w}^*_n)^{-\frac{1}{2}}\|_F \leq \frac{1}{7},
\end{split}
\end{equation}
then after $t_n$ iterations we achieve the output $\mathbf{w}_n$ with the following convergence result
\begin{equation}\label{proposition_2_2}
R_n(\mathbf{w}_n) - R_n(\mathbf{w}^*_n) \leq 1.1\left(\frac{1}{t_n}\right)^{t_n}\![R_n(\mathbf{w}_m) - R_n(\mathbf{w}^*_n)].
\end{equation}
\end{proposition}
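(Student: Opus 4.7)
The plan is to reduce Proposition~\ref{proposition_2} to a direct invocation of the non-asymptotic local superlinear convergence theorem for BFGS established in \cite{qiujiang2020quasinewton1}, applied to the empirical risk $R_n$ starting from the point $\mathbf{w}_m$ with initial Hessian inverse approximation $\mathbf{H}_{m_0}=\nabla^2 R_{m_0}(\mathbf{w}_{m_0})^{-1}$. The two hypotheses in (\ref{proposition_2_1}) have been stated precisely to coincide with the quantitative ``initial iterate closeness'' and ``initial Hessian closeness'' conditions required by that theorem: the $\nabla^2 R_n(\mathbf{w}_n^*)$-weighted distance of $\mathbf{w}_m$ to $\mathbf{w}_n^*$ is bounded by $1/300$, and the Frobenius discrepancy between the initial approximation $\nabla^2 R_{m_0}(\mathbf{w}_{m_0})$ and the true Hessian $\nabla^2 R_n(\mathbf{w}_n^*)$ at the optimum is bounded by $1/7$. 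Under Assumptions~\ref{assum_1}--\ref{assum_2}, $R_n$ is $\mu$-strongly convex, $L$-smooth, and self-concordant, so the hypotheses of the cited theorem are met.

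Given this setup, the first step is to apply the theorem of \cite{qiujiang2020quasinewton1} to obtain a superlinear bound of the form
\begin{equation*}
\|\nabla^2 R_n(\mathbf{w}_n^*)^{1/2}(\hat{\mathbf{w}}_k - \mathbf{w}_n^*)\| \;\leq\; \Bigl(\frac{1}{k}\Bigr)^{k/2}\,\|\nabla^2 R_n(\mathbf{w}_n^*)^{1/2}(\mathbf{w}_m - \mathbf{w}_n^*)\|
\end{equation*}
for every BFGS iterate $\hat{\mathbf{w}}_k$ with unit step size, along with the fact that all such iterates remain inside the local neighborhood where the bound is valid (so the rate applies uniformly in $k$ up to $t_n$). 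The second step is to convert this iterate-norm bound into a function-value bound. Because both $\mathbf{w}_m$ and $\mathbf{w}_n$ lie in a small neighborhood of $\mathbf{w}_n^*$ in the $\nabla^2 R_n(\mathbf{w}_n^*)$-norm (the radius $1/300$ is what ensures the self-concordance corrections are small), the standard self-concordant sandwich bound gives
\begin{equation*}
\tfrac{1}{2}(1-\varepsilon)\|\mathbf{w}-\mathbf{w}_n^*\|^2_{\nabla^2 R_n(\mathbf{w}_n^*)} \;\leq\; R_n(\mathbf{w})-R_n(\mathbf{w}_n^*) \;\leq\; \tfrac{1}{2}(1+\varepsilon)\|\mathbf{w}-\mathbf{w}_n^*\|^2_{\nabla^2 R_n(\mathbf{w}_n^*)}
\end{equation*}
with $\varepsilon$ an absolute constant controlled by the $1/300$ radius. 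Squaring the superlinear iterate bound and combining the two sides of the sandwich gives a factor $(1+\varepsilon)/(1-\varepsilon)$ multiplying $(1/t_n)^{t_n}\,[R_n(\mathbf{w}_m)-R_n(\mathbf{w}_n^*)]$; with the chosen radius $1/300$ this ratio is at most $1.1$, yielding (\ref{proposition_2_2}).

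The main obstacle is bookkeeping of the absolute constants. In particular, I need to verify that the two hypotheses (\ref{proposition_2_1}) are tight enough that (i) the qualitative conditions of \cite{qiujiang2020quasinewton1} are satisfied with the clean superlinear factor $(1/k)^{k/2}$ (no extra multiplicative blowup), (ii) all BFGS iterates $\hat{\mathbf{w}}_1,\dots,\hat{\mathbf{w}}_{t_n}$ remain inside the self-concordant neighborhood so that the sandwich inequality applies at both ends of the comparison, and (iii) the combined constant from the self-concordance distortion on both sides fits under $1.1$. A secondary point worth checking is that the Hessian approximation generated by BFGS stays close enough to $\nabla^2 R_n(\mathbf{w}_n^*)$ throughout the $t_n$ steps, which is handled by the monotonicity-type invariants used in the proof of \cite{qiujiang2020quasinewton1}. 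No new machinery beyond these two ingredients is required.
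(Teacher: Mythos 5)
Your proposal is correct and matches the paper's approach: the paper's entire proof of Proposition~\ref{proposition_2} is a citation of Corollary 5.5 of \cite{qiujiang2020quasinewton1}, whose hypotheses and constants (the $1/300$ radius, the $1/7$ Frobenius bound, and the $1.1\,(1/t_n)^{t_n}$ function-value rate) are exactly what the proposition restates for $R_n$. Your reconstruction of the internal mechanics (iterate-norm superlinear rate plus a self-concordance sandwich to pass to function values) is a reasonable account of what the cited corollary encapsulates, but it is not a different route.
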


Proposition~\ref{proposition_2} shows that if the initial Hessian approximation error is small and the initial iterate is close to the optimal solution, then the iterates of BFGS with step size $\eta=1$ converge to the optimal solution at a superlinear rate of $(1/{k})^k$ after $k$ iterations. The inequalities in \eqref{proposition_2_1} identify the required conditions to ensure that $\mathbf{w}_m$ is within the superlinear convergence neighborhood of BFGS for $R_n$. 

In the next two propositions we show that if the initial sample size $m_0$ is sufficiently large, both $\mathbf{w}_m$ and $\nabla^2{R_{m_0}(\mathbf{w}_{m_0})}$ satisfy the conditions in (\ref{proposition_2_1}) in expectation which are required to achieve the local superlinear convergence.

\begin{proposition}\label{proposition_3}
Consider Algorithm~\ref{algo_ada} for the phase that the active training set contains $n$ samples with empirical risk $R_n$. Suppose Assumptions \ref{assum_1}-\ref{assum_2} hold, and further suppose we are given $\mathbf{w}_m$ which is within the statistical accuracy of $R_m$, i.e, $\mathbb{E}\left[R_m(\mathbf{w}_m) - R_m(\mathbf{w}^*_m)\right] \leq V_m,$ and $n = 2m$. If the sample size $m$ is lower bounded by $m = \Omega\left(\kappa^2\log{d}\right)$, then 
\begin{equation}\label{proposition_3_1}
    \mathbb{E}\left[\|\nabla^2{R_n}(\mathbf{w}^*_n)^{\frac{1}{2}}(\mathbf{w}_m - \mathbf{w}^*_n)\|\right] \leq \frac{1}{300}.
\end{equation}
\end{proposition}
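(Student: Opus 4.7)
The plan is to translate the statistical accuracy guarantee on $\mathbf{w}_m$ (via Proposition~\ref{proposition_1}) into a bound on the weighted distance $\|\nabla^2 R_n(\mathbf{w}_n^*)^{1/2}(\mathbf{w}_m - \mathbf{w}_n^*)\|$ by combining strong convexity of $R_n$ (which converts function sub-optimality into squared Euclidean distance) with Lipschitz smoothness (which upper bounds the Hessian norm), and finally applying Jensen's inequality to pull the expectation inside the square root.

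Concretely, I would proceed in four steps. First, invoke Proposition~\ref{proposition_1} with $n=2m$ to obtain $\mathbb{E}[R_n(\mathbf{w}_m) - R_n(\mathbf{w}_n^*)] \leq 3V_m$. Second, since $\nabla R_n(\mathbf{w}_n^*) = 0$ and $R_n$ is $\mu$-strongly convex by Assumption~\ref{assum_1}, the standard strong convexity bound yields
\begin{equation*}
    \tfrac{\mu}{2}\|\mathbf{w}_m - \mathbf{w}_n^*\|^2 \leq R_n(\mathbf{w}_m) - R_n(\mathbf{w}_n^*).
\end{equation*}
Combining this with the upper bound $\nabla^2 R_n(\mathbf{w}_n^*) \preceq L\mathbf{I}$ (from $L$-Lipschitz continuity of the gradient), we get
\begin{equation*}
    \|\nabla^2 R_n(\mathbf{w}_n^*)^{1/2}(\mathbf{w}_m - \mathbf{w}_n^*)\|^2 \leq L\,\|\mathbf{w}_m - \mathbf{w}_n^*\|^2 \leq 2\kappa\,[R_n(\mathbf{w}_m) - R_n(\mathbf{w}_n^*)],
\end{equation*}
where $\kappa = L/\mu$. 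Third, taking expectations and applying Jensen's inequality to the concave function $\sqrt{\cdot}$,
\begin{equation*}
    \mathbb{E}\!\left[\|\nabla^2 R_n(\mathbf{w}_n^*)^{1/2}(\mathbf{w}_m - \mathbf{w}_n^*)\|\right] \leq \sqrt{2\kappa\,\mathbb{E}[R_n(\mathbf{w}_m) - R_n(\mathbf{w}_n^*)]} \leq \sqrt{6\kappa V_m}.
\end{equation*}
Fourth, invoke the statistical accuracy bound $V_m \leq C/m$, which is valid precisely in the regime $m = \Omega(\kappa^2\log d)$ assumed in the statement. This yields $\sqrt{6\kappa V_m} \leq \sqrt{6C\kappa/m}$, and picking the hidden constant in $m = \Omega(\kappa^2\log d)$ (equivalently, $m = \Omega(\kappa)$, which is automatically subsumed) large enough drives this below $1/300$.

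The main conceptual point — and the only place where one must be careful — is the loss of a square root when converting a function-value guarantee into a weighted-norm guarantee, which costs a factor of $\sqrt{\kappa}$ in the final bound. This is why the lower bound on $m$ scales with $\kappa$, and why it is important that the statistical accuracy estimate $V_m = \mathcal{O}(1/m)$ already requires $m = \Omega(\kappa^2\log d)$: the latter dominates and so no new condition has to be introduced. Otherwise, all steps are routine applications of strong convexity, smoothness, and Jensen, chained to the sub-optimality transfer established in Proposition~\ref{proposition_1}.
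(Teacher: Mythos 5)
Your proposal is correct and follows essentially the same route as the paper's own proof: bound the weighted distance by $\sqrt{2\kappa\,[R_n(\mathbf{w}_m)-R_n(\mathbf{w}^*_n)]}$ via strong convexity and smoothness, apply Jensen's inequality together with Proposition~\ref{proposition_1} to get $\sqrt{6\kappa V_m}$, and absorb the resulting $m=\Omega(\kappa)$ requirement into the assumed $m=\Omega(\kappa^2\log d)$ under which $V_m=\mathcal{O}(1/m)$ holds. No gaps; the argument matches the paper step for step.
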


Proposition \ref{proposition_3} shows that if the training set size is sufficiently large, then the solution of the previous phase is within the BFGS superlinear convergence neighborhood of the current problem, and the first condition in \eqref{proposition_2_1} holds in expectation. This condition is indeed satisfied throughout the entire learning process, if it holds for the first training set with $m_0$ samples, i.e., $m_0 = \Omega\left(\kappa^2\log{d}\right)$. 

Next we establish under what condition the Hessian approximation used in adaptive sample size method which is the Hessian evaluated with respect to $m_0$ samples, i.e., $\nabla^2{R_{m_0}(\mathbf{w}_{m_0})}$, satisfies the second condition in \eqref{proposition_2_1} in expectation which is required for the superlinear convergence of BFGS.

\begin{proposition}\label{proposition_4}
Consider AdaQN in the phase that the active training set contains $n$ samples. If Assumptions \ref{assum_1}-\ref{assum_2} hold and the initial sample size $m_0$ satisfies 
   $ m_0 = \Omega\left(\max\left\{d, \kappa^2s\log{d}\right\}\right)$,
where $s$ is defined as $s := \sup_{\mathbf{w}, n}\left(\frac{\mathbb{E}\left[\|\nabla^2{R_n}(\mathbf{w}) - \nabla^2{R}(\mathbf{w})\|_F\right]}{\mathbb{E}\left[\|\nabla^2{R_n}(\mathbf{w}) - \nabla^2{R}(\mathbf{w})\|\right]}\right)^2$. Then for any $n\geq 2m_0$ we have
\begin{equation}\label{proposition_4_1}
    \mathbb{E}\left[\|\nabla^2{R_n}(\mathbf{w}^*_n)^{-\frac{1}{2}}[\nabla^2{R_{m_0}(\mathbf{w}_{m_0})} - \nabla^2{R_n}(\mathbf{w}^*_n)]\nabla^2{R_n}(\mathbf{w}^*_n)^{-\frac{1}{2}}\|_F\right] \leq \frac{1}{7}.
\end{equation}
\end{proposition}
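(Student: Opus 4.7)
The plan is to reduce the weighted Frobenius-norm expression in (\ref{proposition_4_1}) to a sum of three standard quantities: two sample-average concentration errors and one population-Hessian change-of-point term. First, by $\mu$-strong convexity we have $\|\nabla^2 R_n(\mathbf{w}^*_n)^{-1/2}\|_{op}\leq 1/\sqrt{\mu}$, so the sandwich inequality $\|AMB\|_F\leq \|A\|_{op}\|M\|_F\|B\|_{op}$ bounds the left-hand side of (\ref{proposition_4_1}) by $(1/\mu)\,\mathbb{E}[\|\nabla^2 R_{m_0}(\mathbf{w}_{m_0})-\nabla^2 R_n(\mathbf{w}^*_n)\|_F]$. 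Inserting $\pm\nabla^2 R(\mathbf{w}_{m_0})$ and $\pm\nabla^2 R(\mathbf{w}^*_n)$ inside the norm and applying the triangle inequality yields the upper bound $\mathbb{E}[T_1]+\mathbb{E}[T_2]+\mathbb{E}[T_3]$, where $T_1=\|\nabla^2 R_{m_0}(\mathbf{w}_{m_0})-\nabla^2 R(\mathbf{w}_{m_0})\|_F$, $T_2=\|\nabla^2 R(\mathbf{w}_{m_0})-\nabla^2 R(\mathbf{w}^*_n)\|_F$, and $T_3=\|\nabla^2 R(\mathbf{w}^*_n)-\nabla^2 R_n(\mathbf{w}^*_n)\|_F$.

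For the two sampling terms $T_1$ and $T_3$, the definition of $s$ converts the Frobenius-norm expectation directly into a spectral-norm expectation: $\mathbb{E}[T_1]\leq \sqrt{s}\,\mathbb{E}[\|\nabla^2 R_{m_0}(\mathbf{w}_{m_0})-\nabla^2 R(\mathbf{w}_{m_0})\|]$, and analogously for $T_3$. Under Assumption~\ref{assum_1}, each summand $\nabla^2 f(\mathbf{w};\mathbf{z}_i)$ satisfies $\mu I \preceq \nabla^2 f(\mathbf{w};\mathbf{z}_i) \preceq L I$, so a standard matrix Bernstein / Hoeffding bound yields $\mathbb{E}[\|\nabla^2 R_k(\mathbf{w})-\nabla^2 R(\mathbf{w})\|]=O(L\sqrt{\log d/k})$ uniformly in $\mathbf{w}$, for any sample size $k$. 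Since $n\geq 2m_0$, both $\mathbb{E}[T_1]$ and $\mathbb{E}[T_3]$ are at most $O(L\sqrt{s\log d/m_0})$.

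For the change-of-point term $T_2$, I use $\|A\|_F\leq \sqrt{d}\,\|A\|_{op}$ together with a local Lipschitz bound on $\nabla^2 R$ near $\mathbf{w}^*$; such a bound follows from the self-concordance in Assumption~\ref{assum_2} combined with strong convexity and smoothness in a neighborhood of $\mathbf{w}^*$. The statistical-accuracy hypothesis $\mathbb{E}[R_{m_0}(\mathbf{w}_{m_0})-R_{m_0}(\mathbf{w}^*_{m_0})]\leq V_{m_0}$ together with $\mu$-strong convexity gives $\mathbb{E}[\|\mathbf{w}_{m_0}-\mathbf{w}^*_{m_0}\|]=O(1/\sqrt{\mu m_0})$; combined with the standard bound $\mathbb{E}[\|\mathbf{w}^*_k-\mathbf{w}^*\|]=O(1/\sqrt{\mu k})$ for $k\in\{m_0,n\}$ (obtained by applying $\mu$-strong convexity to $V_k=O(1/k)$), the triangle inequality yields $\mathbb{E}[\|\mathbf{w}_{m_0}-\mathbf{w}^*_n\|]=O(1/\sqrt{\mu m_0})$, whence $\mathbb{E}[T_2]=O(\sqrt{d/m_0})$, with constants absorbing the Hessian-Lipschitz constant and $\mu$.

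Assembling the three bounds and multiplying by $1/\mu$, one obtains an upper bound of the form $C_1\kappa\sqrt{s\log d/m_0}+C_2\sqrt{d/m_0}$ with absolute constants $C_1,C_2$. Forcing each summand to be at most $1/14$ gives exactly $m_0=\Omega(\max\{d,\kappa^2 s\log d\})$, matching the statement. The main technical obstacle is the $T_2$ step: promoting self-concordance, which is naturally phrased in the self-concordant norm at a reference point, into a clean Euclidean Hessian-Lipschitz estimate that can be applied at the random pair $(\mathbf{w}_{m_0},\mathbf{w}^*_n)$. A secondary subtlety appears in $T_1$, where $\mathbf{w}_{m_0}$ depends on the very sample $\mathcal{S}_{m_0}$ entering the concentration argument; this is handled cleanest by using a matrix concentration inequality that is uniform in $\mathbf{w}$, so that the data-dependence of the evaluation point is immaterial.
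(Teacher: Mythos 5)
Your decomposition and your treatment of the sampling terms $T_1,T_3$ are essentially sound (they play the role of the paper's Lemma~\ref{lemma_2} and Lemma~\ref{lemma_3} and produce the $\kappa^2 s\log d$ part of the threshold), but the handling of the change-of-point term $T_2$ contains a genuine gap: the ``constants absorbing the Hessian-Lipschitz constant and $\mu$'' are not constants --- they are powers of $\kappa$, and tracking them breaks your conclusion. Concretely, self-concordance plus $L$-smoothness yields a Euclidean Hessian-Lipschitz constant of order $L^{3/2}$ (this is the best available, since $|D^3R(\mathbf{w})[u,u,u]| \leq 2\,(u^\top\nabla^2R(\mathbf{w})\,u)^{3/2} \leq 2L^{3/2}\|u\|^3$), and strong convexity gives $\mathbb{E}\left[\|\mathbf{w}_{m_0}-\mathbf{w}^*_n\|\right] = \mathcal{O}\left(\sqrt{V_{m_0}/\mu}\right)$. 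After multiplying by your outer factor $1/\mu$, the $T_2$ contribution is
\begin{equation*}
\frac{1}{\mu}\cdot\sqrt{d}\cdot \mathcal{O}(L^{3/2})\cdot\mathcal{O}\left(\sqrt{\frac{V_{m_0}}{\mu}}\right) \;=\; \mathcal{O}\left(\kappa^{3/2}\sqrt{\frac{d}{m_0}}\right),
\end{equation*}
so forcing it below a fixed constant requires $m_0=\Omega(\kappa^3 d)$, not $\Omega(d)$. Since $\kappa^3 d$ is not dominated by $\max\{d,\kappa^2 s\log d\}$ (take, e.g., $s=\mathcal{O}(1)$ and $d\geq\kappa^2\log d$), your argument only yields the conclusion under a strictly stronger hypothesis than the one in the proposition; as written it does not establish the stated result.

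The paper avoids exactly this loss by never leaving the affine-invariant geometry in the change-of-point step. It splits at the empirical Hessian $\nabla^2R_n(\mathbf{w}_{m_0})$ rather than at population Hessians, keeps the weighting matrices $\nabla^2R_n(\mathbf{w}^*_n)^{-1/2}$ in place, and argues as follows: a Proposition~\ref{proposition_3}-type bound plus Markov's inequality give $r:=\|\nabla^2R_n(\mathbf{w}^*_n)^{1/2}(\mathbf{w}_{m_0}-\mathbf{w}^*_n)\|\leq 1/2$ with high probability; Lemma~\ref{lemma_4} (self-concordance) then bounds the \emph{weighted} spectral norm of $\nabla^2R_n(\mathbf{w}_{m_0})-\nabla^2R_n(\mathbf{w}^*_n)$ by $6r$, hence its Frobenius version by $6\sqrt{d}\,r$; finally $w(t)\geq t^2/3$ together with Proposition~\ref{proposition_1} and Jensen's inequality give an expected bound of $18\sqrt{dV_{m_0}}=\mathcal{O}\left(\sqrt{d/m_0}\right)$ --- with no $\kappa$ whatsoever --- so $m_0=\Omega(d)$ suffices for this term. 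If you want to salvage your route, you must measure both the distance $\mathbf{w}_{m_0}-\mathbf{w}^*_n$ and the Hessian perturbation in the local norm at $\mathbf{w}^*_n$ throughout, instead of converting to Euclidean quantities and back, since each conversion costs a power of $\kappa$. A secondary remark: the uniform-in-$\mathbf{w}$ matrix concentration you invoke to handle the dependence of $\mathbf{w}_{m_0}$ on $\mathcal{S}_{m_0}$ is not free (it requires a covering argument and introduces additional dimension factors); the paper's own proof simply applies its pointwise Lemma~\ref{lemma_3} at the random point $\mathbf{w}_{m_0}$, so this issue does not distinguish your proposal from the paper, but calling it ``immaterial'' is optimistic.
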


Based on Proposition \ref{proposition_4}, if the size of initial training set satisfies $m_0=\Omega\left(\max\left\{d, \kappa^2s\log{d}\right\}\right)$, then the second condition in \eqref{proposition_2_1} holds in expectation. By combining the results of Propositions \ref{proposition_3} and \ref{proposition_4}, we obtain the required conditions for $m_0$. Moreover, Proposition \ref{proposition_2} quantifies the maximum number of iterations required to solve each ERM problem to its corresponding statistical accuracy. In the following theorem, we exploit these results to characterize the overall computational complexity of AdaQN to reach the statistical accuracy of the full training set with $N$ samples. 

\begin{theorem}\label{theorem_1}
Consider AdaQN described in Algorithm~\ref{algo_ada} for the case that we use BFGS updates in  (\ref{BFGS_update}). Suppose Assumptions \ref{assum_1}-\ref{assum_2} hold, and the initial sample size $m_0$ is lower bounded by
\begin{equation}\label{theorem_1_1}
    m_0 = \Omega\left(\max\left\{d, \kappa^2s\log{d}\right\}\right),
\end{equation}
where $s = \sup_{\mathbf{w}, n}\left(\frac{\mathbb{E}\left[\|\nabla^2{R_n}(\mathbf{w}) - \nabla^2{R}(\mathbf{w})\|_F\right]}{\mathbb{E}\left[\|\nabla^2{R_n}(\mathbf{w}) - \nabla^2{R}(\mathbf{w})\|\right]}\right)^2$. Then at the stage with $n$ samples, AdaQN finds $\mathbf{w}_n$ within the statistical accuracy of $R_n$ after at most $\boxed{t_n=3}$ iterations. Further, the computational cost of AdaQN to reach the statistical accuracy of the full training set $\mathcal{T}$ is
\begin{equation}\label{eq:comp_bound}
\tau_{inv} + m_0\tau_{Hess}+   6N    \tau_{grad} + 3 \left(1+\log ({N}/{m_0})\right)
\tau_{prod}.
\end{equation}
\end{theorem}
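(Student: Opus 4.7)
The argument is an induction over the phases of AdaQN, combining Propositions~\ref{proposition_1}--\ref{proposition_4} with a doubling geometric sum for the computational cost. The base case is given: $\mathbf{w}_{m_0}$ is supplied within the statistical accuracy of $R_{m_0}$. For the inductive step, I assume the iterate $\mathbf{w}_m$ produced by the previous phase satisfies $\mathbb{E}[R_m(\mathbf{w}_m)-R_m(\mathbf{w}_m^*)]\leq V_m$ and show that after $t_n=3$ BFGS iterations on $R_n$, where $n=2m$, the new iterate $\mathbf{w}_n$ satisfies $\mathbb{E}[R_n(\mathbf{w}_n)-R_n(\mathbf{w}_n^*)]\leq V_n$.

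The verification of the superlinear-convergence hypotheses \eqref{proposition_2_1} is the first main step. Proposition~\ref{proposition_3} together with the lower bound $m_0=\Omega(\kappa^2\log d)$ (and $m\geq m_0$) guarantees the first condition on $\|\nabla^2 R_n(\mathbf{w}_n^*)^{1/2}(\mathbf{w}_m-\mathbf{w}_n^*)\|$ in expectation, where the inductive hypothesis on $\mathbf{w}_m$ is exactly what is needed. Proposition~\ref{proposition_4}, under the stronger bound $m_0=\Omega(\max\{d,\kappa^2 s\log d\})$, guarantees the second, Hessian-approximation condition in expectation for every $n\geq 2m_0$. The only subtlety is that Proposition~\ref{proposition_2} assumes the conditions deterministically; I will invoke its conclusion in expectation, consistent with how Propositions~\ref{proposition_3}--\ref{proposition_4} are stated (equivalently, one may appeal to Markov's inequality on the probabilistic event).

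Next I combine Proposition~\ref{proposition_1} and Proposition~\ref{proposition_2} to pin down $t_n$. Proposition~\ref{proposition_1} yields $\mathbb{E}[R_n(\mathbf{w}_m)-R_n(\mathbf{w}_n^*)]\leq 3V_m$, and Proposition~\ref{proposition_2} then gives
\begin{equation*}
    \mathbb{E}[R_n(\mathbf{w}_n)-R_n(\mathbf{w}_n^*)] \leq 1.1\left(\tfrac{1}{t_n}\right)^{t_n}\!\cdot 3V_m = 3.3\left(\tfrac{1}{t_n}\right)^{t_n} V_m.
\end{equation*}
Using $V_n=\mathcal{O}(1/n)$ with $n=2m$, I have $V_m\leq 2V_n$, so it suffices to choose $t_n$ with $6.6\,(1/t_n)^{t_n}\leq 1$. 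Trying $t_n=2$ gives $6.6/4=1.65>1$, while $t_n=3$ gives $6.6/27<1$. Hence three BFGS steps per phase are enough, and crucially the choice is independent of $n$ and of the condition number.

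Finally, I tally the computational cost by summing over phases $n\in\{m_0,2m_0,4m_0,\ldots,N\}$. The single initialization requires evaluating $\nabla^2 R_{m_0}(\mathbf{w}_{m_0})$ at a cost of $m_0\tau_{Hess}$ and inverting it once at cost $\tau_{inv}$; both occur only once because every later phase reuses $\mathbf{H}_{m_0}$ as the initial Hessian inverse approximation (the essential feature of AdaQN). In each of the $1+\log_2(N/m_0)$ BFGS phases, the three BFGS iterations contribute three matrix--vector products (the product $\hat{\mathbf{H}}\nabla R_n(\hat{\mathbf{w}})$, noting that the BFGS update \eqref{BFGS_update} can be arranged to reuse this product), giving the term $3(1+\log(N/m_0))\tau_{prod}$; the gradient evaluations contribute $3n\tau_{grad}$ per phase (one fresh full-batch gradient per iteration, reusing $\nabla R_n(\hat{\mathbf{w}}^+)$ as $\nabla R_n(\hat{\mathbf{w}})$ of the next iteration), and the geometric sum $\sum_{k}3\cdot 2^k m_0 \leq 6N$ yields the $6N\tau_{grad}$ term. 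Adding these produces the bound in \eqref{eq:comp_bound}.

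The main obstacle is the bookkeeping that makes $t_n=3$ independent of $n$ and of $\kappa$: it hinges on the interplay between the factor $3$ coming from Proposition~\ref{proposition_1}, the prefactor $1.1$ in the BFGS bound \eqref{proposition_2_2}, and the halving of the statistical accuracy $V_n=V_m/2$ provided by doubling the sample size. Everything else is either a direct application of the propositions or a geometric-sum computation.
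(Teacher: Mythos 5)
Your proposal is correct and follows essentially the same route as the paper's proof: you verify the hypotheses of Proposition~\ref{proposition_2} via Propositions~\ref{proposition_3} and~\ref{proposition_4} (applying the conclusion in expectation, exactly as the paper does), chain Propositions~\ref{proposition_1} and~\ref{proposition_2} with $V_m \approx 2V_n$ to reduce the requirement to $6.6\,(1/t_n)^{t_n}\leq 1$, and obtain the cost bound from the same doubling geometric sum plus the one-time $m_0\tau_{Hess}+\tau_{inv}$ initialization. The only cosmetic difference is that you certify $t_n=3$ by direct computation ($6.6/27<1$) whereas the paper invokes Lemma~\ref{lemma_6} to get the threshold $t_n\geq 1+\ln(6.6)$.
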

Theorem \ref{theorem_1} states that if the initial sample size is sufficiently large,  the number of required BFGS updates for solving each subproblem is at most $3$ iterations. Further, it characterizes the overall computational cost of AdaQN.

\begin{remark}\label{remark_1}
Since $ \|\cdot\| \leq \|\cdot\|_{F} \leq \sqrt{d}\|\cdot\|$, parameter $s $ in Theorem~\ref{theorem_1} belongs to the interval $[1, d]$. Hence, in the worst case $s = d$ and $m_0 = \Omega\left(\kappa^2d\log{d}\right)$. However, for many common classes of problems including linear regression and logistic regression and for training datasets with specific structures $s$ could be $\mathcal{O}(1)$. In those cases the initial sample size is lower bounded by $\Omega\left(\max\left\{d, \kappa^2\log{d}\right\}\right)$; see Section~\ref{sec:analysis_of_s} in the appendix for details. In fact, our numerical experiments also verify this observation and show that the choice of $m_0$  could be much smaller than the worst-case bound of $m_0 = \Omega\left(\kappa^2d\log{d}\right)$; see Section~\ref{sec:experiments}.
\end{remark}

\begin{remark}
In the above complexity bound, we neglect the cost of finding the initial solution $\mathbf{w}_{m_0}$, as the cost of finding an approximate solution for the first ERM problem with $m_0$ samples is negligible compared to the overall cost of the AdaQN. For instance, if one solves the first ERM problem using the Katyusha algorithm \cite{Allenzhu2017-katyusha}, the cost of the initial stage would be $\mathcal{O}\left( \tau_{grad}(m_0 + \sqrt{m_0\kappa_{m_0}} )\log{m_0} \right)$ (where $\kappa_{m_0}$ is the condition number for ERM with $m_0$ samples), which is indeed dominated by the term  $\mathcal{O}\left( N\tau_{grad}\right)$, when we are in the regime that $N \gg m_0$.
\end{remark}

\begin{remark}
If one uses Katyusha, which is an optimal first-order method, to solve an ERM with $N$ samples, the overall gradient computation to achieve accuracy $\epsilon=V_N$ would be $\mathcal{O}((N+\sqrt{N\kappa})\log{N})$. The gradient computation cost of AdaQN considering the initialization step is $\mathcal{O}(((m_0+\sqrt{m_0\kappa})\log{m_0})+N) $. Indeed, if we are in the regime that the size of initial set $m_0$ is sufficiently smaller than the size of full training set $N$, (i.e., $N \gg m_0\log{m_0}$), then AdaQN gradient complexity scales as $\mathcal{O}(N+\sqrt{m_0\kappa}\log{m_0})$ which is smaller than $\mathcal{O}(N\log{N}+\sqrt{N\kappa}\log{N})$ cost of Katyusha. However, AdaQN requires evaluating one additional matrix inversion, $m_0$ Hessian evaluations and $\mathcal{O}(\log{({N}/{m_0})})$ matrix-vector product computations.
\end{remark}

\section{Numerical experiments}\label{sec:experiments}

Next, we practically evaluate the performance of AdaQN to solve large-scale ERM problems. We consider a binary classification problem with $l_2$ regularized logistic regression loss function, where $\mu > 0$ is the regularization parameter. The logistic loss is convex, and, therefore, all functions in our experiments are $\mu$-strongly convex. We normalize all data points with the unit norm so that the loss function gradient is Lipschitz continuous with $L \leq \mu + 1$, hence condition number is $\kappa \leq 1 + {1}/{\mu}$. Moreover, the logistic regression loss function is self-concordant. Thus, Assumptions~\ref{assum_1}-\ref{assum_2} hold.

We compare AdaQN with adaptive sample size Newton method (Ada Newton)   \cite{mokhtari2016adaptive}, standard BFGS quasi-Newton method, the L-BFGS quasi-Newton method \cite{liu1989limited}, the stochastic quasi-Newton (SQN) method proposed in \cite{byrd2016stochastic}, and three stochastic first-order methods, including stochastic gradient descent (SGD), the Katyusha algorithm \cite{Allenzhu2017-katyusha}, and SAGA which is a variance reduced method \cite{defazio2014saga}. In our experiments, we start with the initial point $\mathbf{w}_0 = c*\vec{\mathbf{1}}$ where $c > 0$ is a tuned parameter and $\vec{\mathbf{1}} \in \mathbb{R}^d$ is the one vector. First we conduct several iterations of the gradient descent method on the initial sub-problem with $m_0$ samples until the initial condition $\|\nabla{R_{m_0}}(\mathbf{w}_{m_0})\| \leq \sqrt{2\mu V_{m_0}}$ is satisfied.  Note that $R_{m_0}(\mathbf{w}_{m_0}) - R_{m_0}(\mathbf{w}^*_{m_0}) \leq \frac{1}{2\mu}\|\nabla{R_{m_0}}(\mathbf{w}_{m_0})\|^2$ implies that this initial condition guarantees $\mathbf{w}_{m_0}$ is within the statistical accuracy of $R_{m_0}$. We use $\mathbf{w}_{m_0}$ as the initial point of AdaQN and Ada Newton as presented in Algorithm~\ref{algo_ada}. We use $\mathbf{w}_{0} = c*\vec{\mathbf{1}}$ as the initial point of all other algorithms. We include the cost of finding the proper initialization for AdaQN in our comparisons. 
We compare these methods over (i) MNIST dataset of handwritten digits \cite{lecun2010mnist}, (ii) Epsilon dataset from PASCAL challenge 2008 \cite{epsilon}, (iii) GISETTE handwritten digit classification dataset from the NIPS 2003 feature selection challenge \cite{gisette} and (iv) Orange dataset of customer relationship management from KDD Cup 2009 \cite{orange}.\footnote{We use LIBSVM \cite{libsvm} with license: \url{https://www.csie.ntu.edu.tw/~cjlin/libsvm/COPYRIGHT}.} More details provided in Table \ref{tab_2}.
 
In our experiments, we observe that even when the initial sample size $m_0$ is smaller than the threshold in \eqref{theorem_1_1}, AdaQN performs well and converges superlinearly in each subproblem. This is because \eqref{theorem_1_1} is a sufficient condition to guarantee our theoretical superlinear rate, and in practice smaller choices of $m_0$ also work. For Ada Newton we use the same scheme descried in Algorithm~\ref{algo_ada} and replace the QN update with Newton's method with step size $1$. The step sizes of the standard BFGS method and the L-BFGS method are determined by the Wolfe condition \cite{byrd1987global} using the backtracking line search algorithm to guarantee they converge on the whole dataset. All hyper-parameters (initialization parameter $c$, step size, batch size, etc.) of BFGS, L-BFGS, stochastic quasi-Newton method, SGD, SAGA, and Katyusha have been tuned to achieve the best performance on each dataset.

The convergence results are shown in Figures \ref{fig_1}-\ref{fig_4} for the considered  datasets. We report both training error, i.e.,  $R_N(\mathbf{w}) - R_N(\mathbf{w}^{*}_{N})$, and test error for all algorithms in terms of number of effective passes over dataset and in terms of runtime. In general AdaQN mostly outperforms the first-order optimization methods (SGD, Katyusha and SAGA). This is caused by the fact that our considered problems are highly ill-conditioned. For instance, for the results of Epsilon dataset which has $\kappa\approx 10^4$, there is a substantial gap between the performance of AdaQN and first-order methods. 

We observe that AdaQN outperforms BFGS, L-BFGS and SQN in all considered settings. As discussed earlier, this observation is expected since AdaQN is the only quasi-Newton algorithm among these methods that exploits superlinear convergence of quasi-Newton methods throughout the entire training process. Moreover, AdaQN does not require a line search scheme, while to obtain the best performance of BFGS and L-BFGS, we used a line-search scheme which is often computationally costly. 

\begin{table}[t]
  \caption{Datasets information: sample size $N$, dimension $d$, initial set size $m_0$ and regularization parameter $\mu$.}
  \vspace{3mm}
  \centering
  \begin{tabular}{ |c|c|c|c|c| }
    \hline
    Dataset & $N$ & $d$ & $m_0$ & $\mu$ \\
    \hline
    \hline
    MNIST & 11774 & 784 & 1024 & 0.05 \\
    \hline
    GISETTE & 6000 & 5000 & 1024 & 0.05 \\
    \hline
    Orange & 40000 & 14000 & 8192 & 0.1 \\
    \hline
    Epsilon & 80000 & 2000 & 4096 & 0.0001 \\
    \hline
  \end{tabular}
  \label{tab_2}
  \vspace{-2mm}
\end{table}

\begin{figure}[t!]
  \centering
    \includegraphics[width=0.25\linewidth]{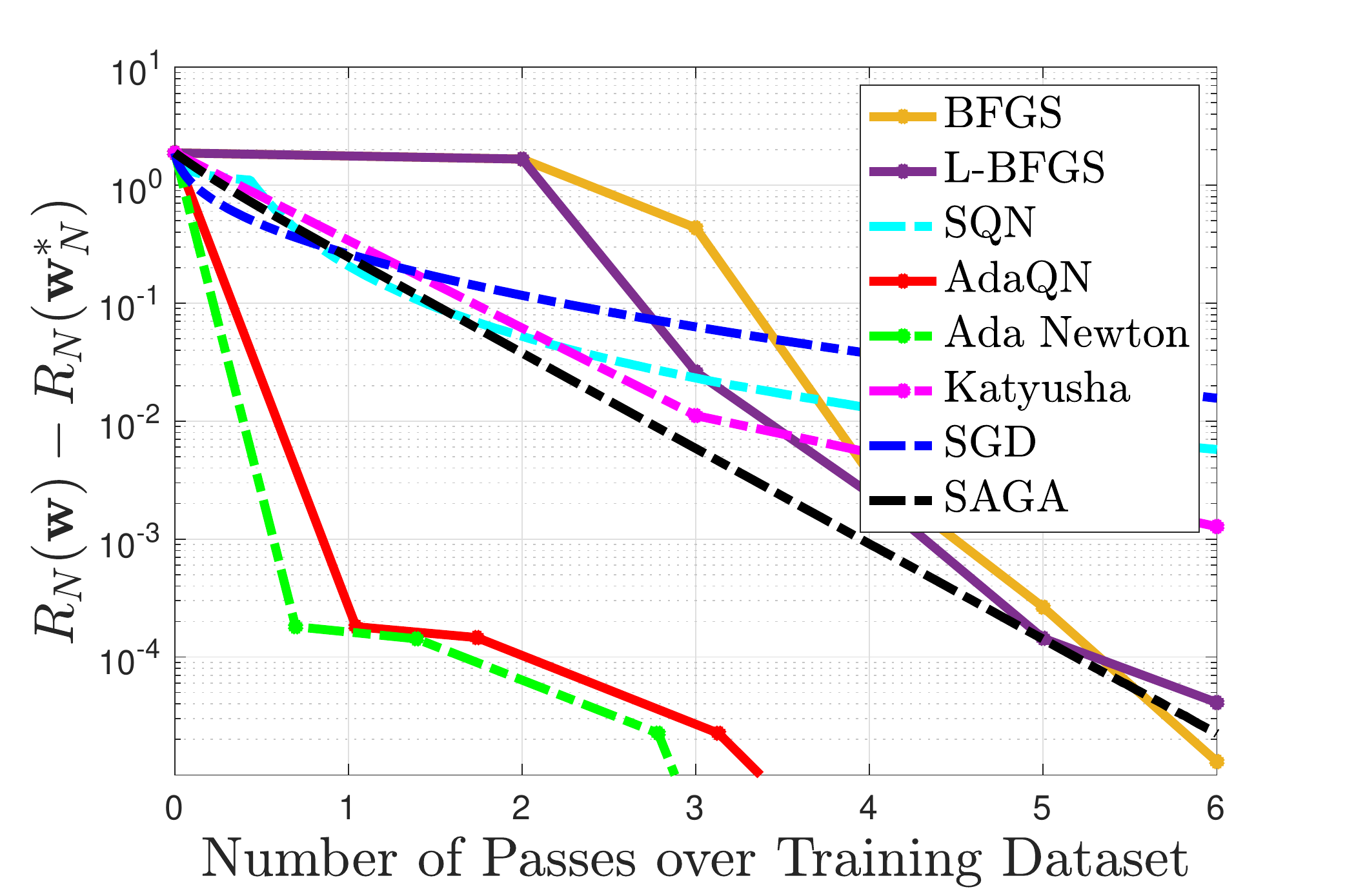}
    \hspace{-4mm}
    \includegraphics[width=0.25\linewidth]{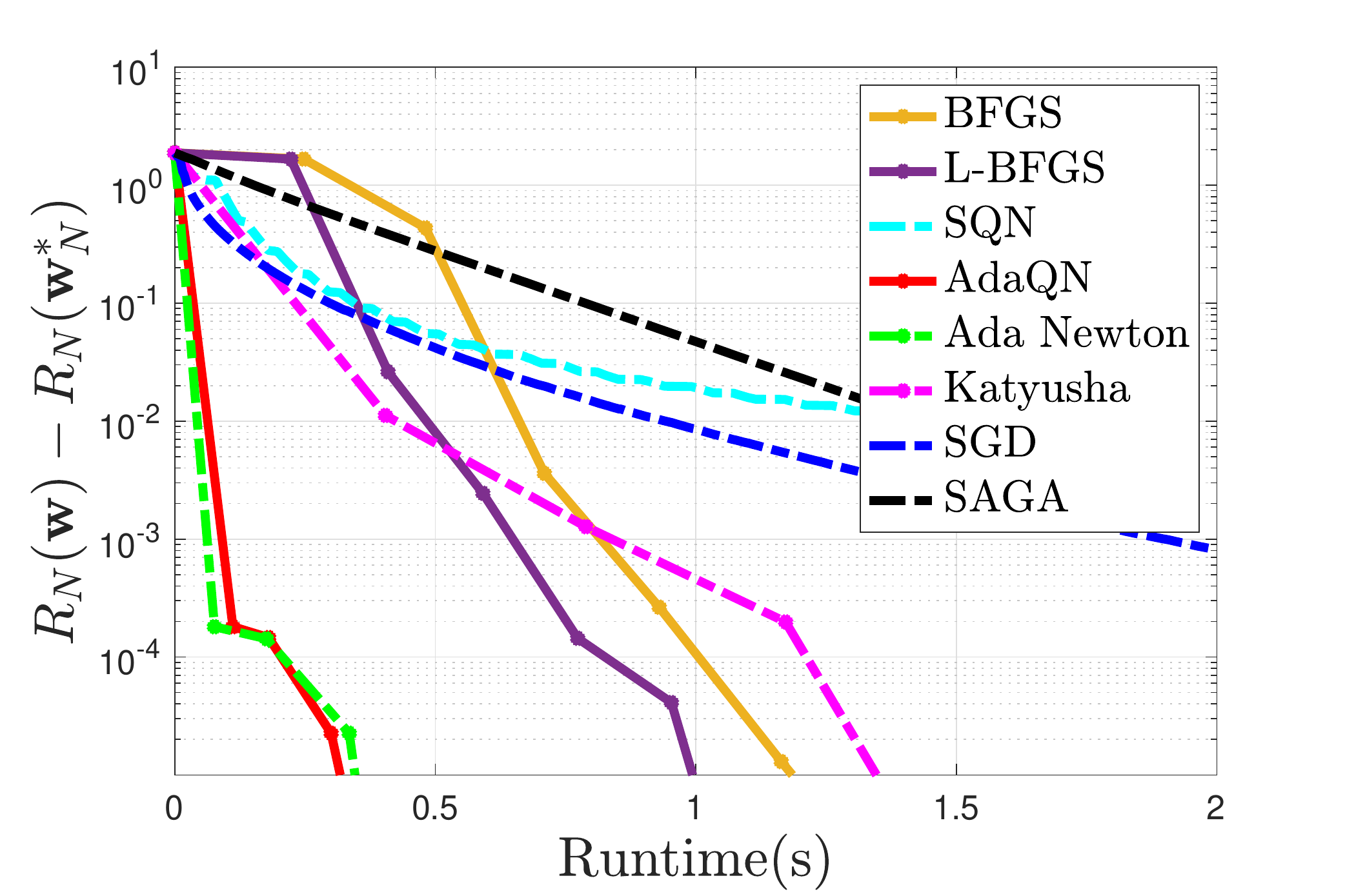}
    \hspace{-4mm}
    \includegraphics[width=0.25\linewidth]{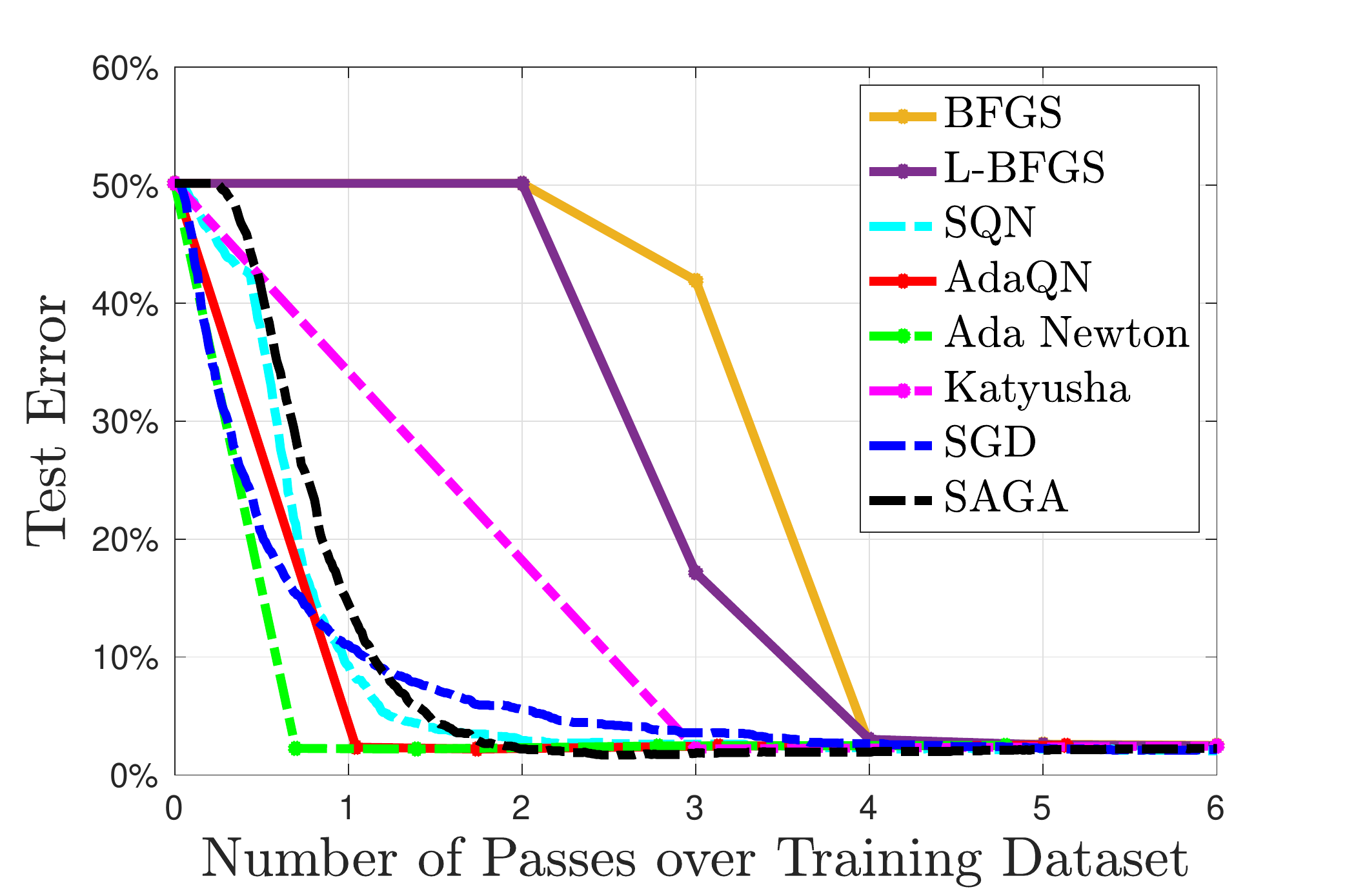}
    \hspace{-4mm}
    \includegraphics[width=0.25\linewidth]{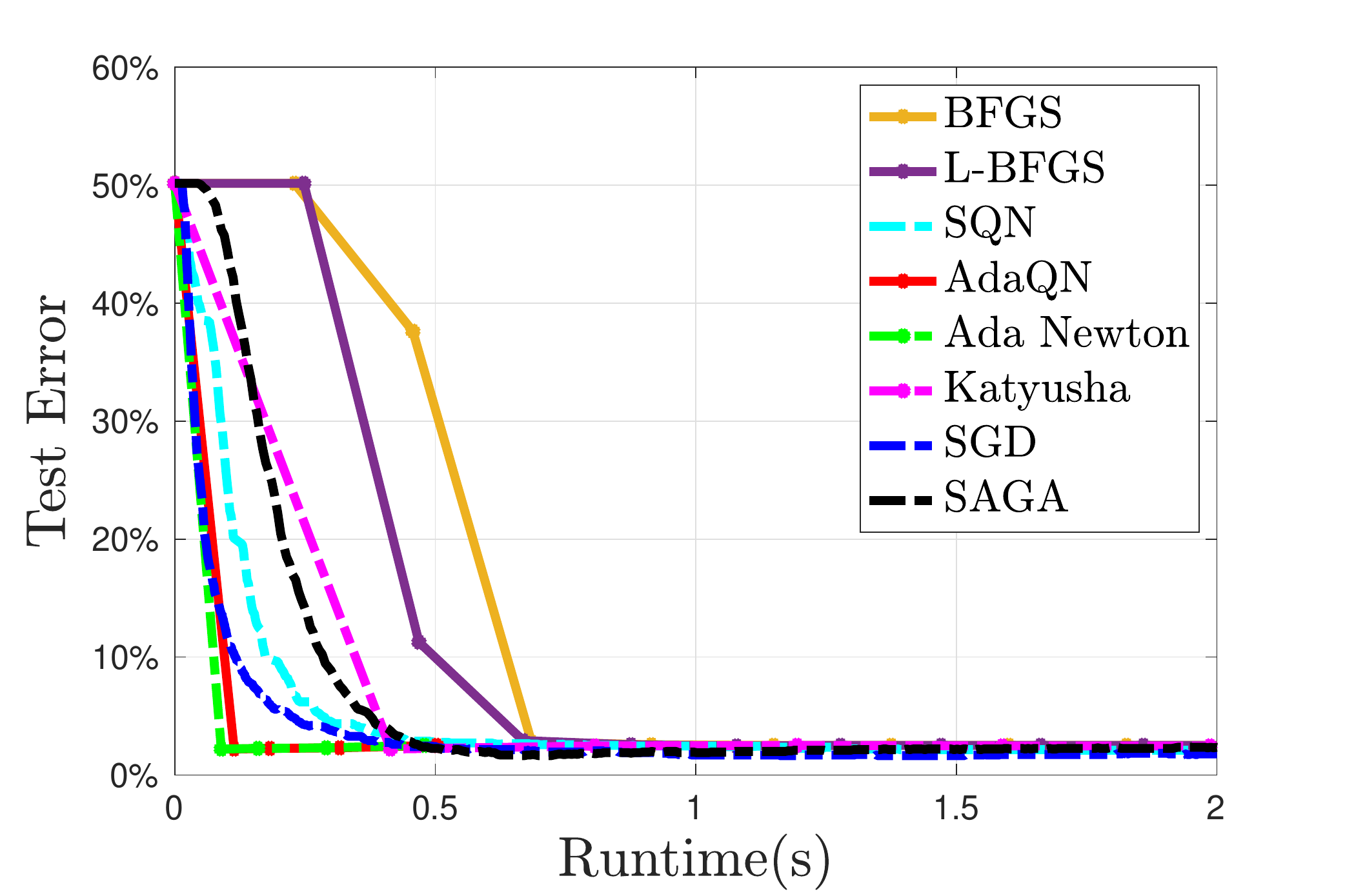}
  \vspace{-1mm}
  \caption{Training error (first two plots) and test error (last two plots) in terms of number of passes over dataset and runtime for MNIST dataset.}
  \vspace{-5mm}
  \label{fig_1}
\end{figure}

\begin{figure}[t!]
  \centering
    \includegraphics[width=0.25\linewidth]{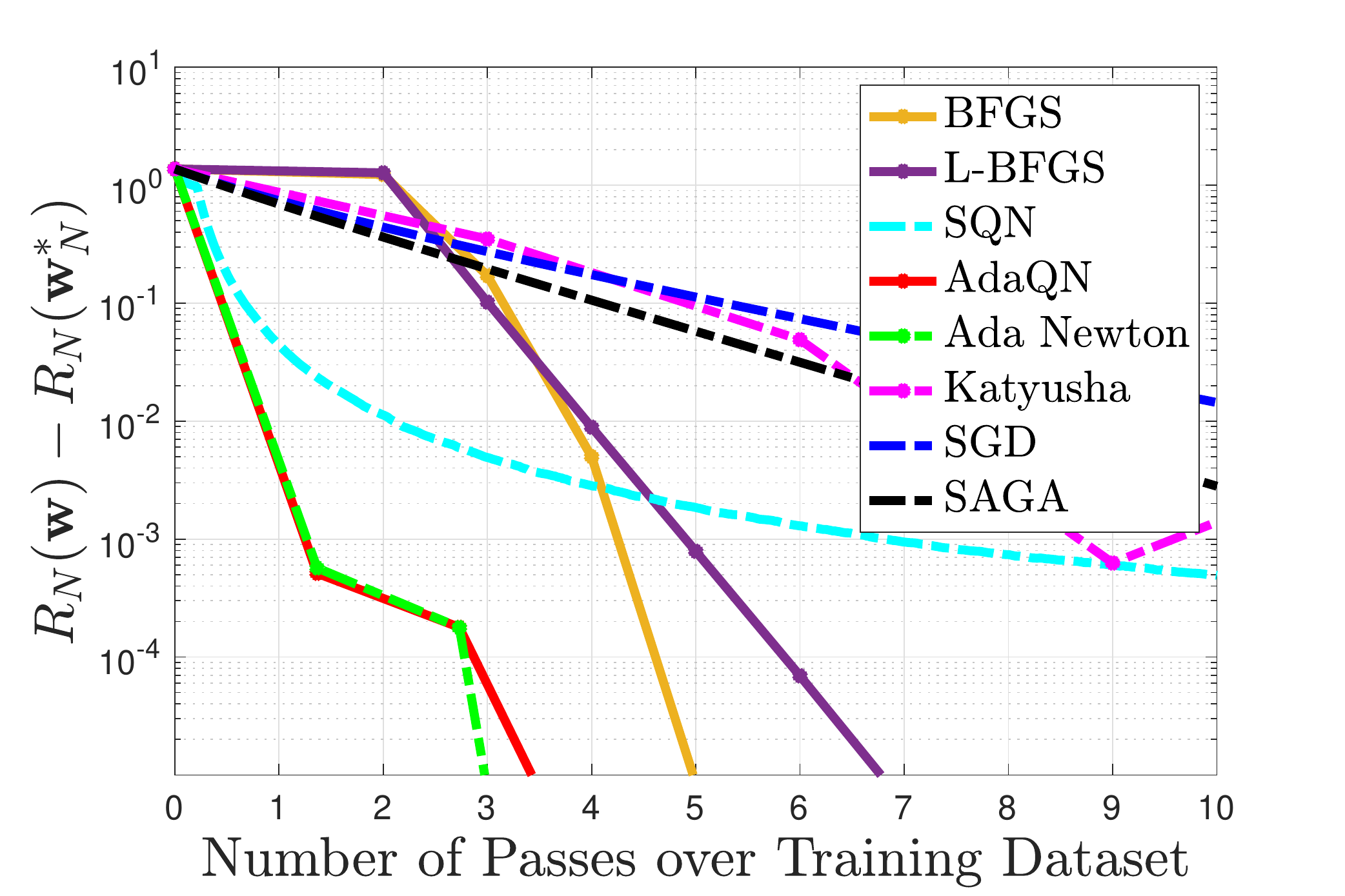}
    \hspace{-4mm}
    \includegraphics[width=0.25\linewidth]{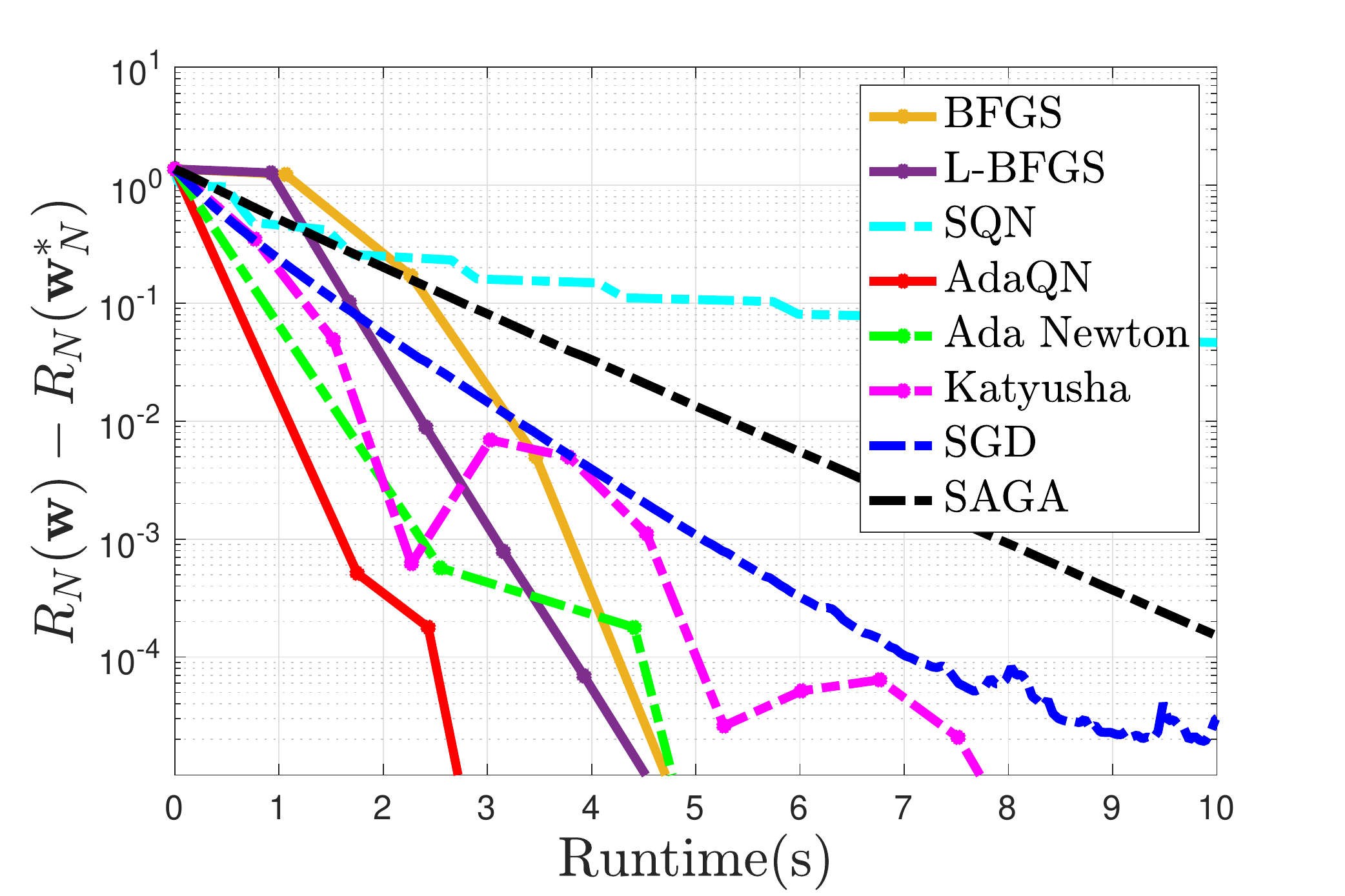}
    \hspace{-4mm}
    \includegraphics[width=0.25\linewidth]{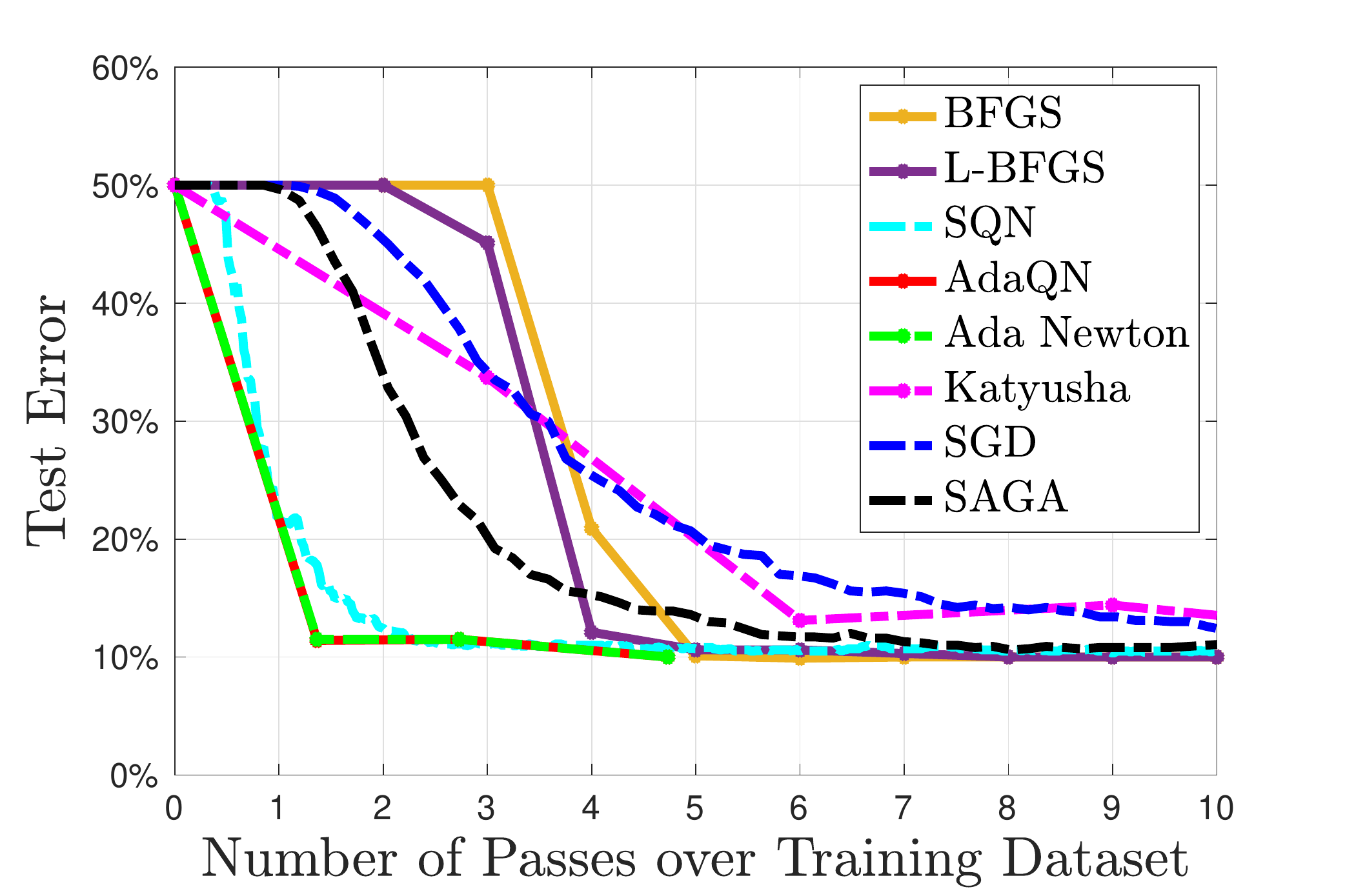}
    \hspace{-4mm}
    \includegraphics[width=0.25\linewidth]{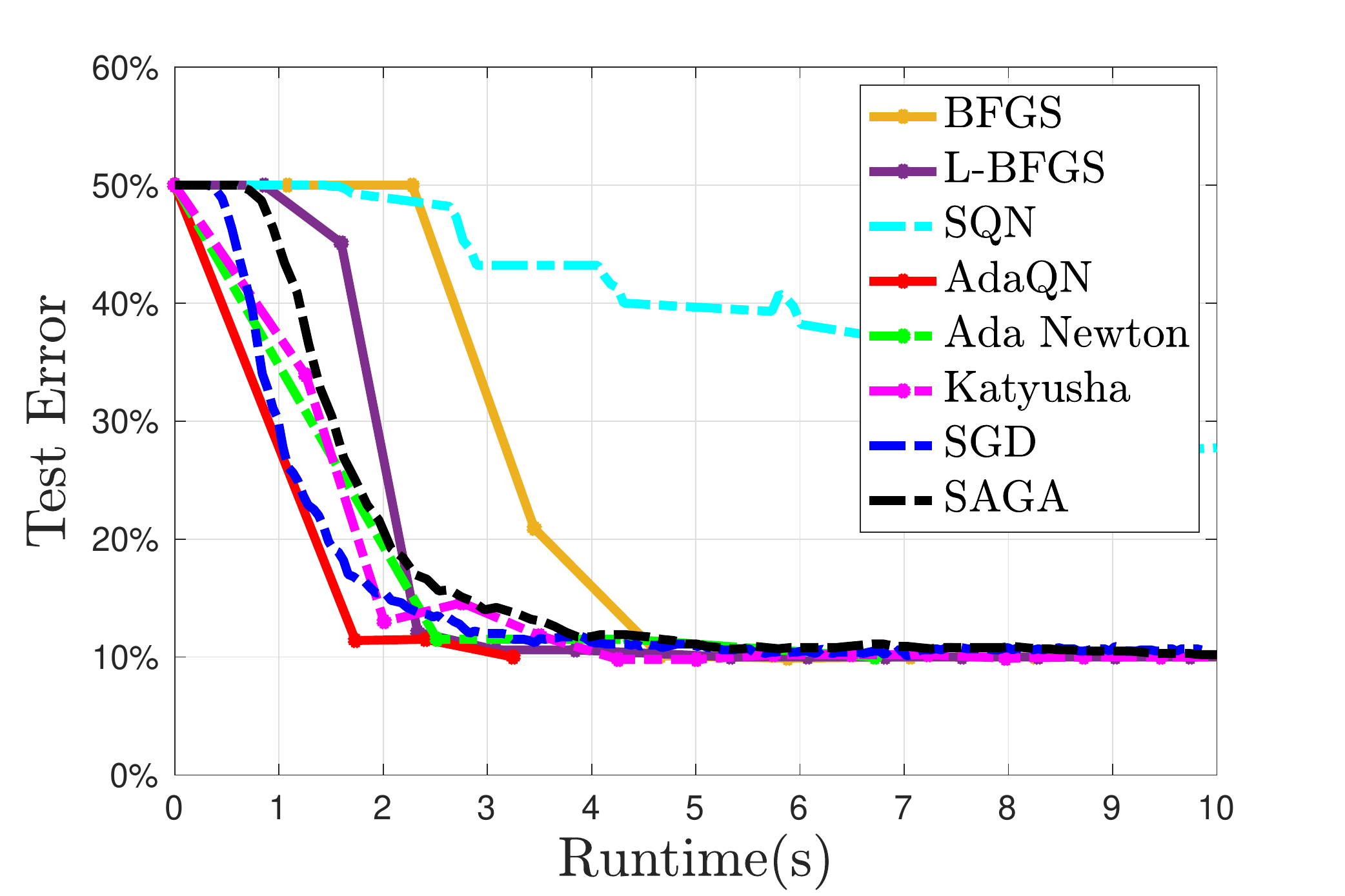}
  \vspace{-1mm}
  \caption{Training error (first two plots) and test error (last two plots) in terms of number of passes over dataset and runtime for GISETTE dataset.}
  \vspace{-5mm}
  \label{fig_2}
\end{figure}

\begin{figure}[t!]
  \centering
    \includegraphics[width=0.25\linewidth]{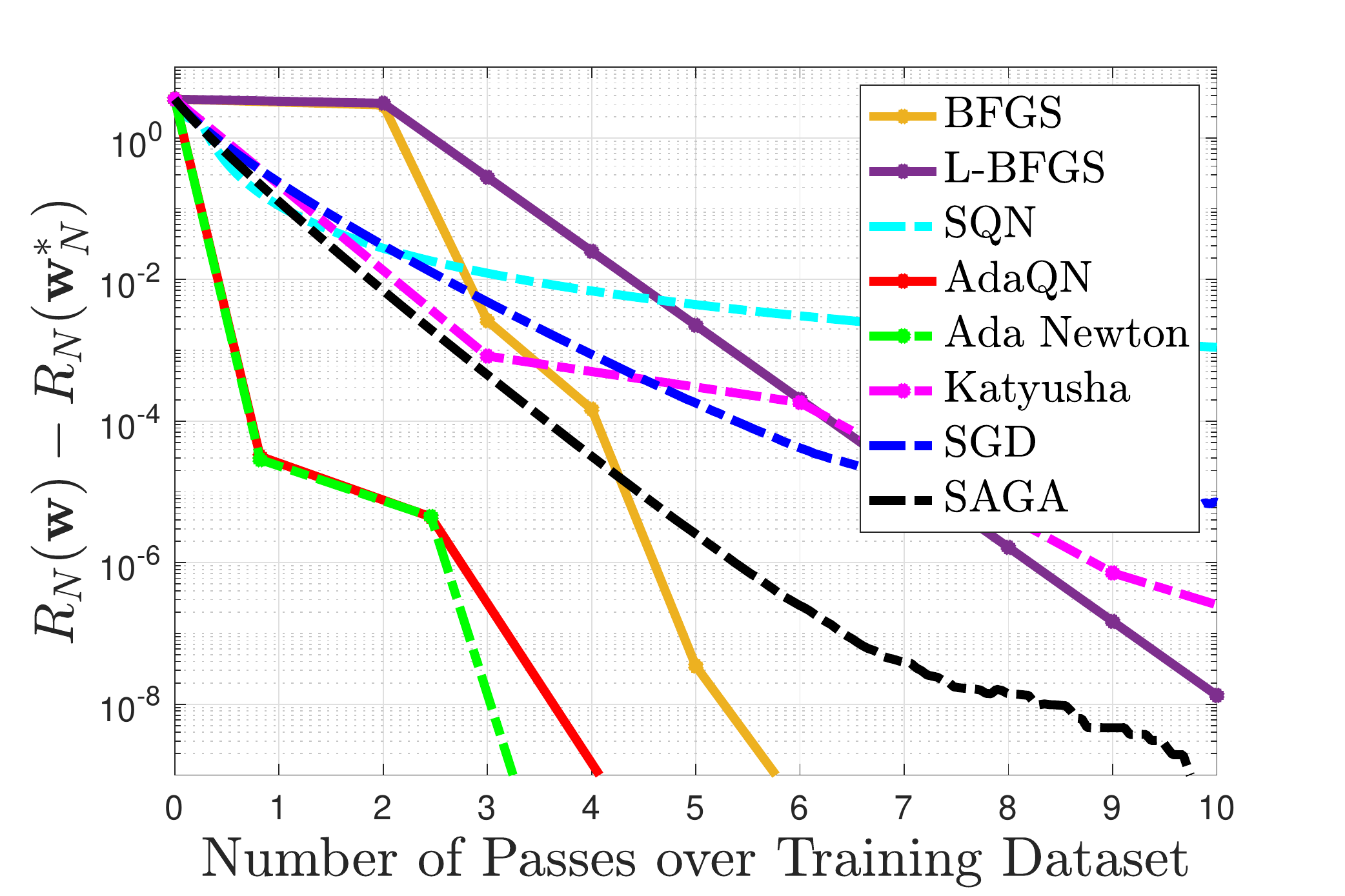}
    \hspace{-4mm}
    \includegraphics[width=0.25\linewidth]{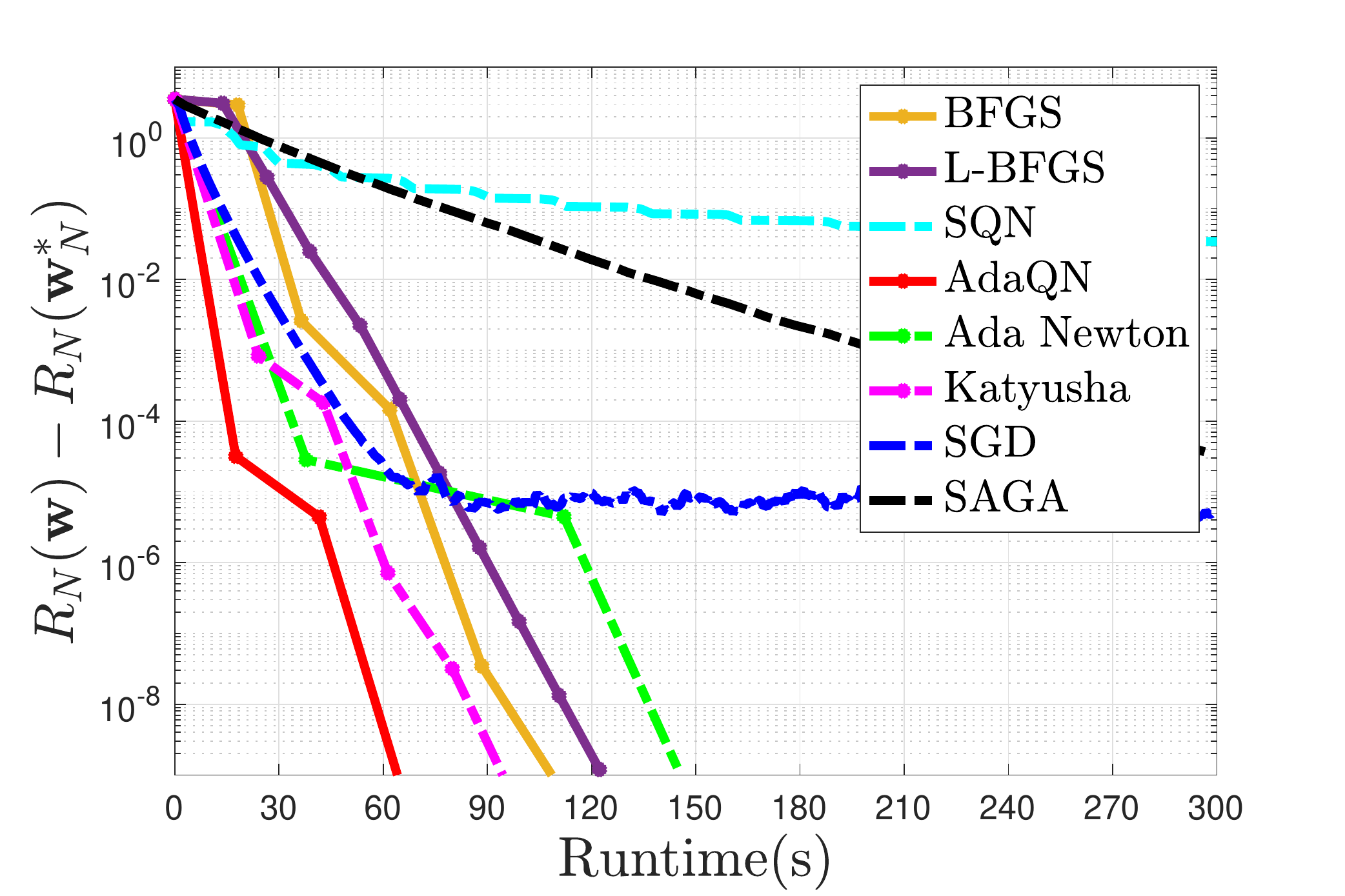}
    \hspace{-4mm}
    \includegraphics[width=0.25\linewidth]{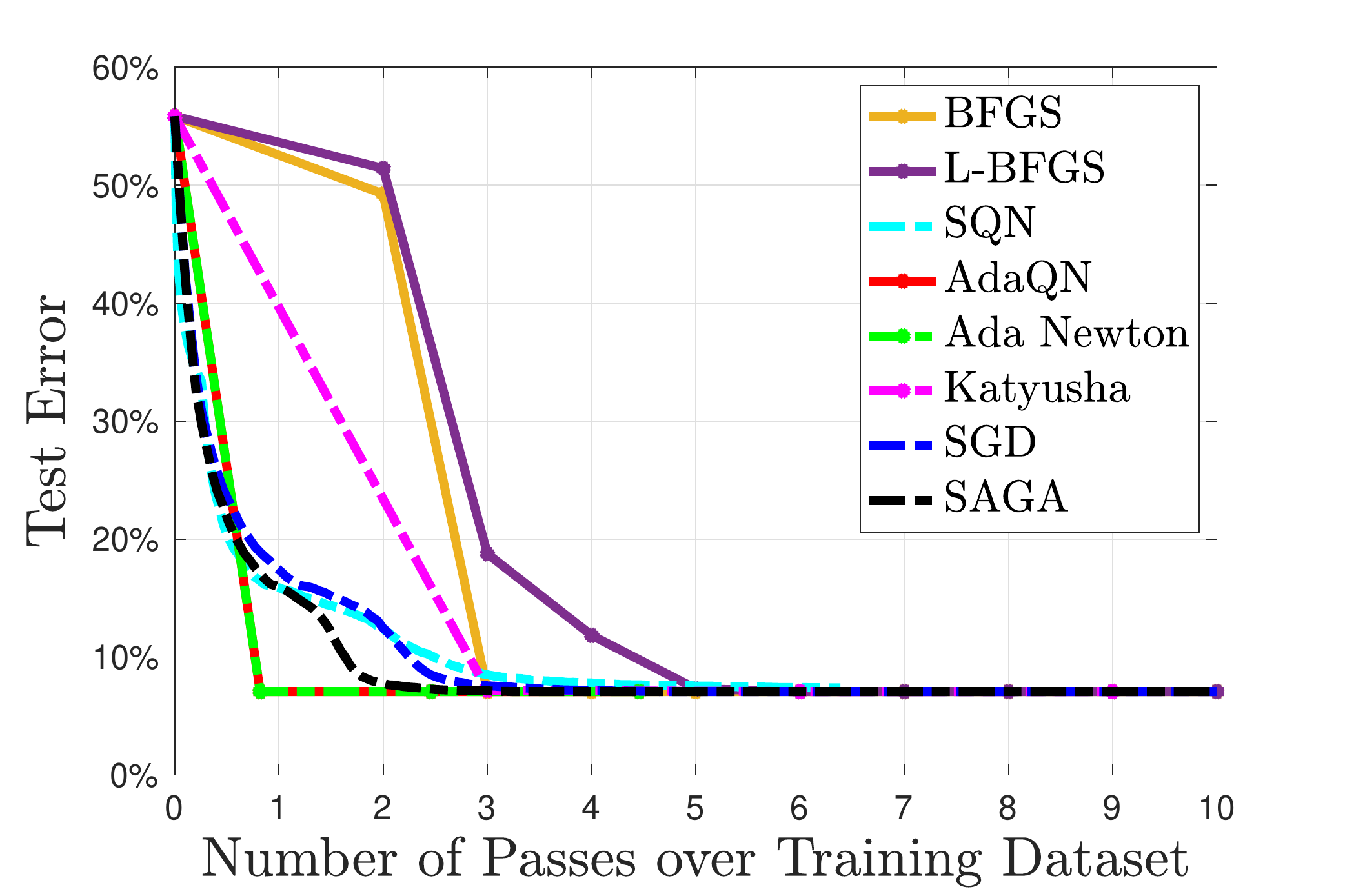}
    \hspace{-4mm}
    \includegraphics[width=0.25\linewidth]{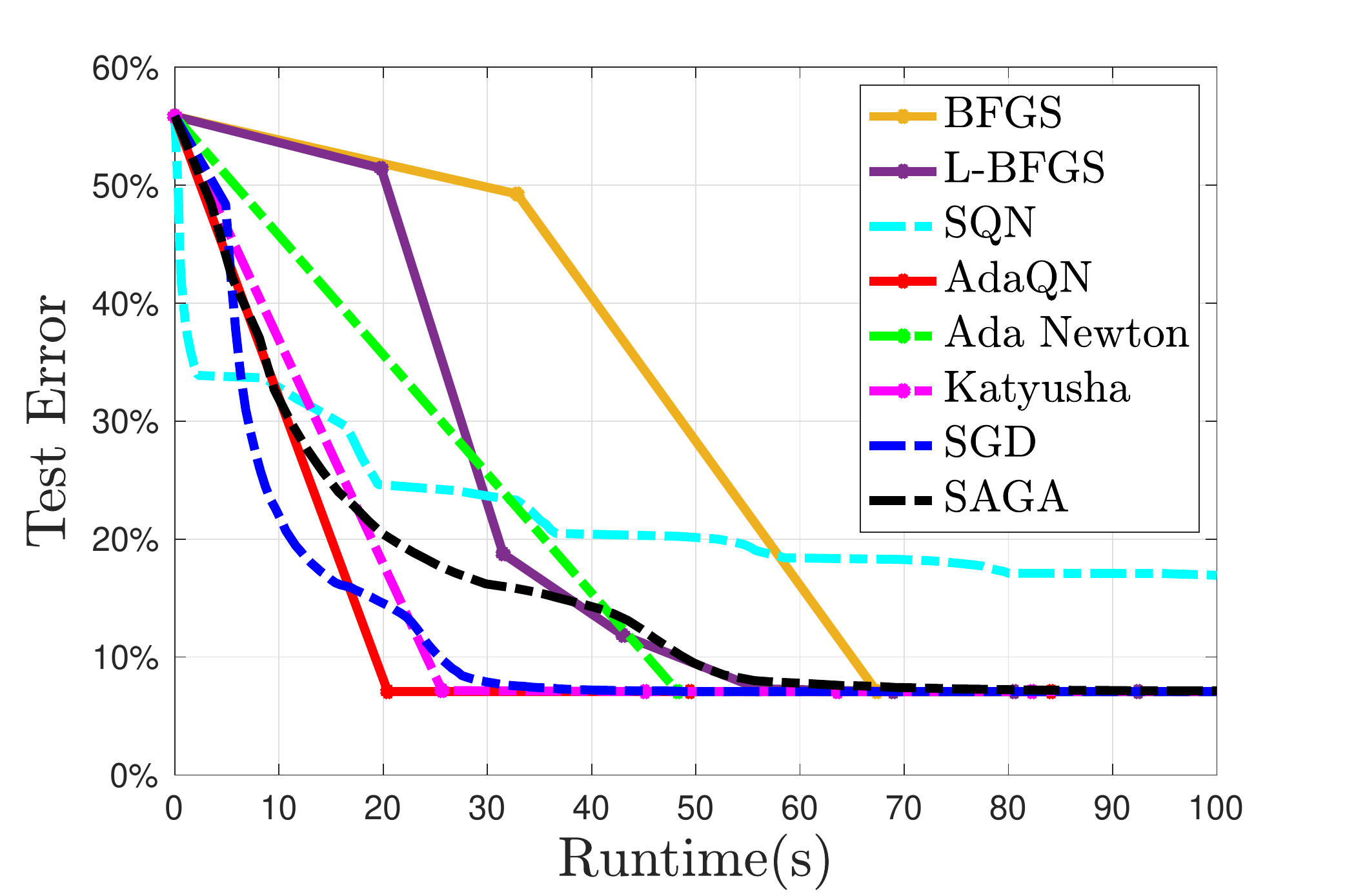}
  \vspace{-1mm}
  \caption{Training error (first two plots) and test error (last two plots) in terms of number of passes over dataset and runtime for Orange dataset.}
  \vspace{-5mm}
  \label{fig_3}
\end{figure}

\begin{figure}[t!]
  \centering
    \includegraphics[width=0.25\linewidth]{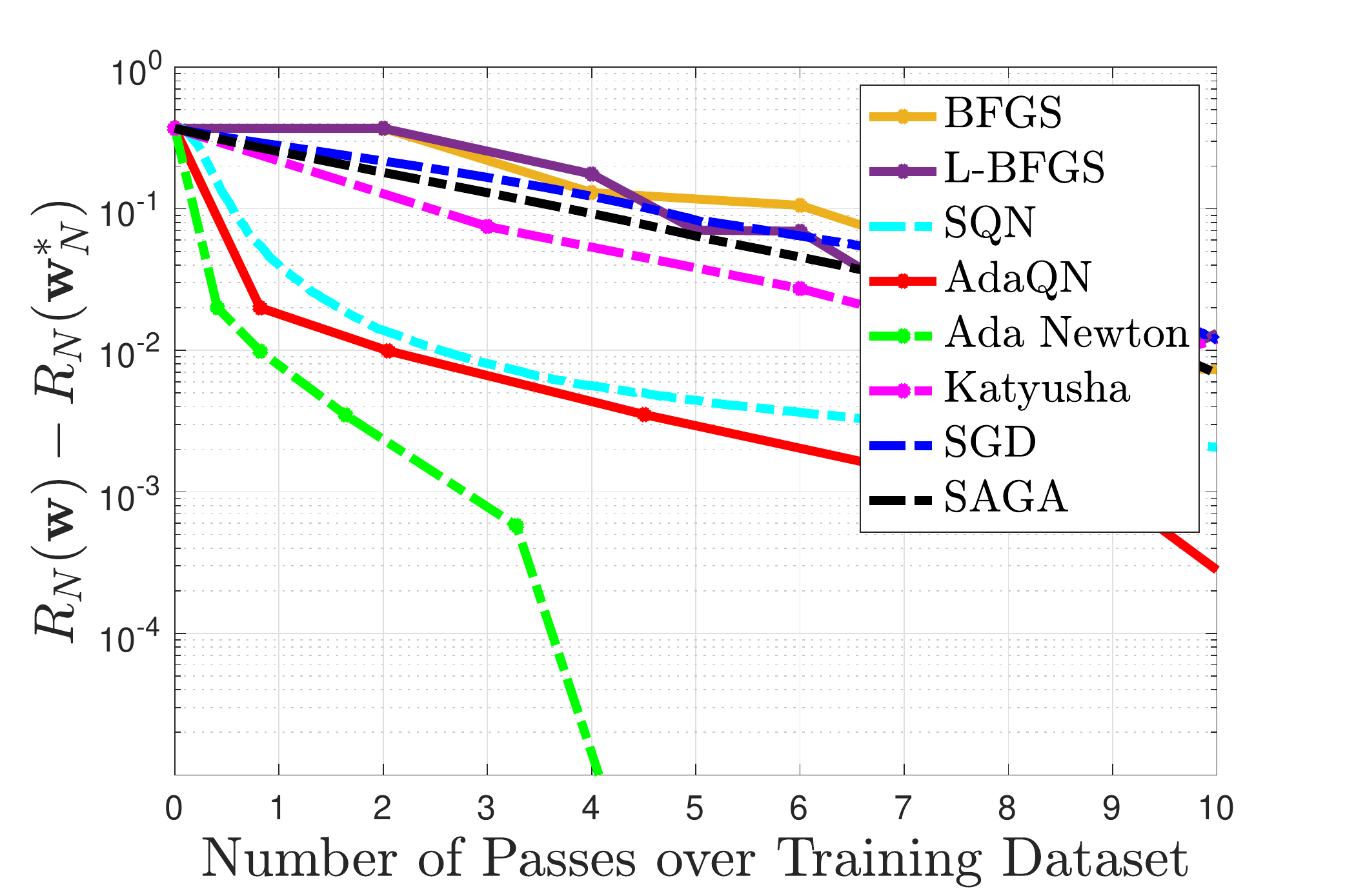}
    \hspace{-4mm}
    \includegraphics[width=0.25\linewidth]{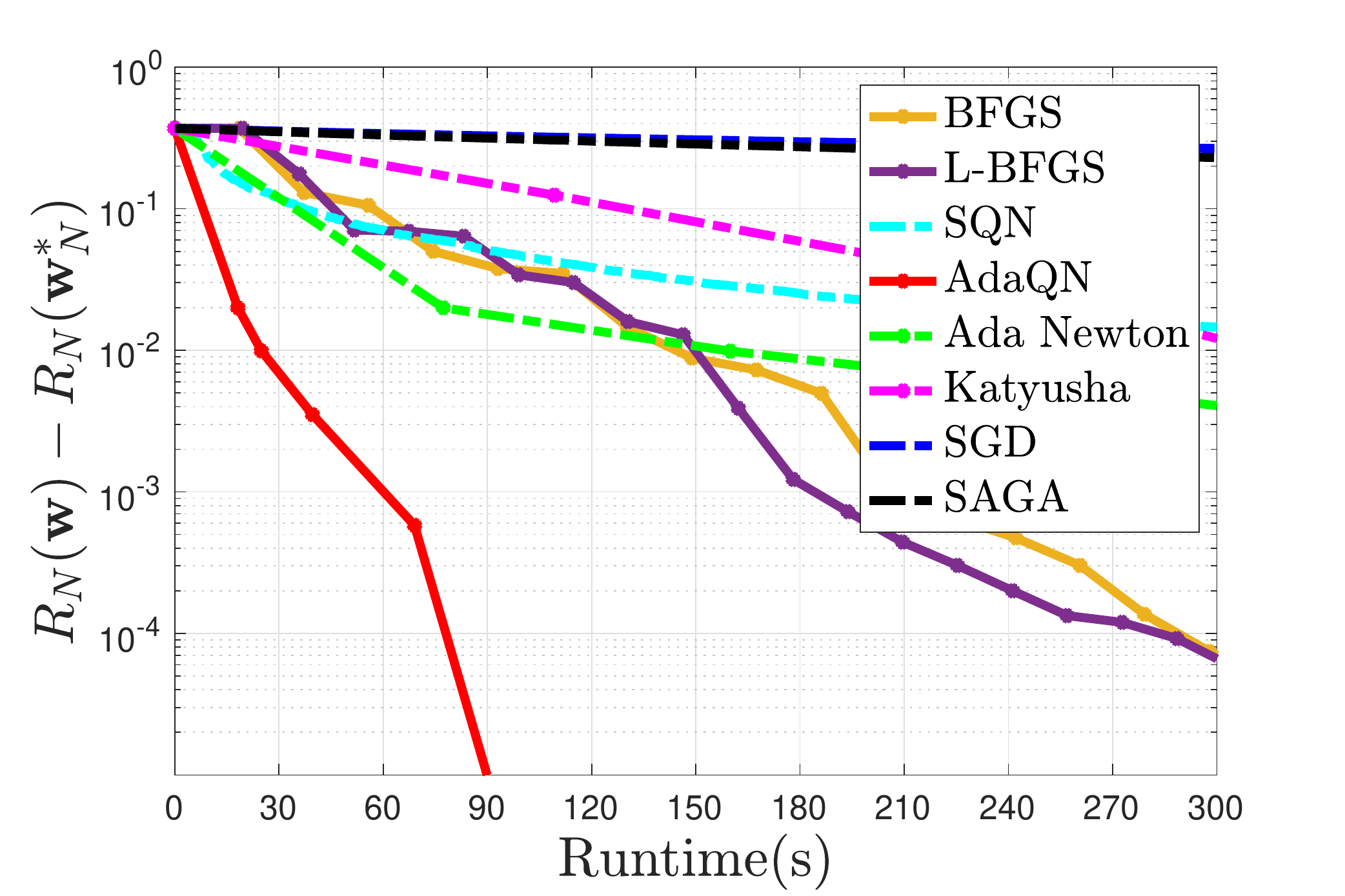}
    \hspace{-4mm}
    \includegraphics[width=0.25\linewidth]{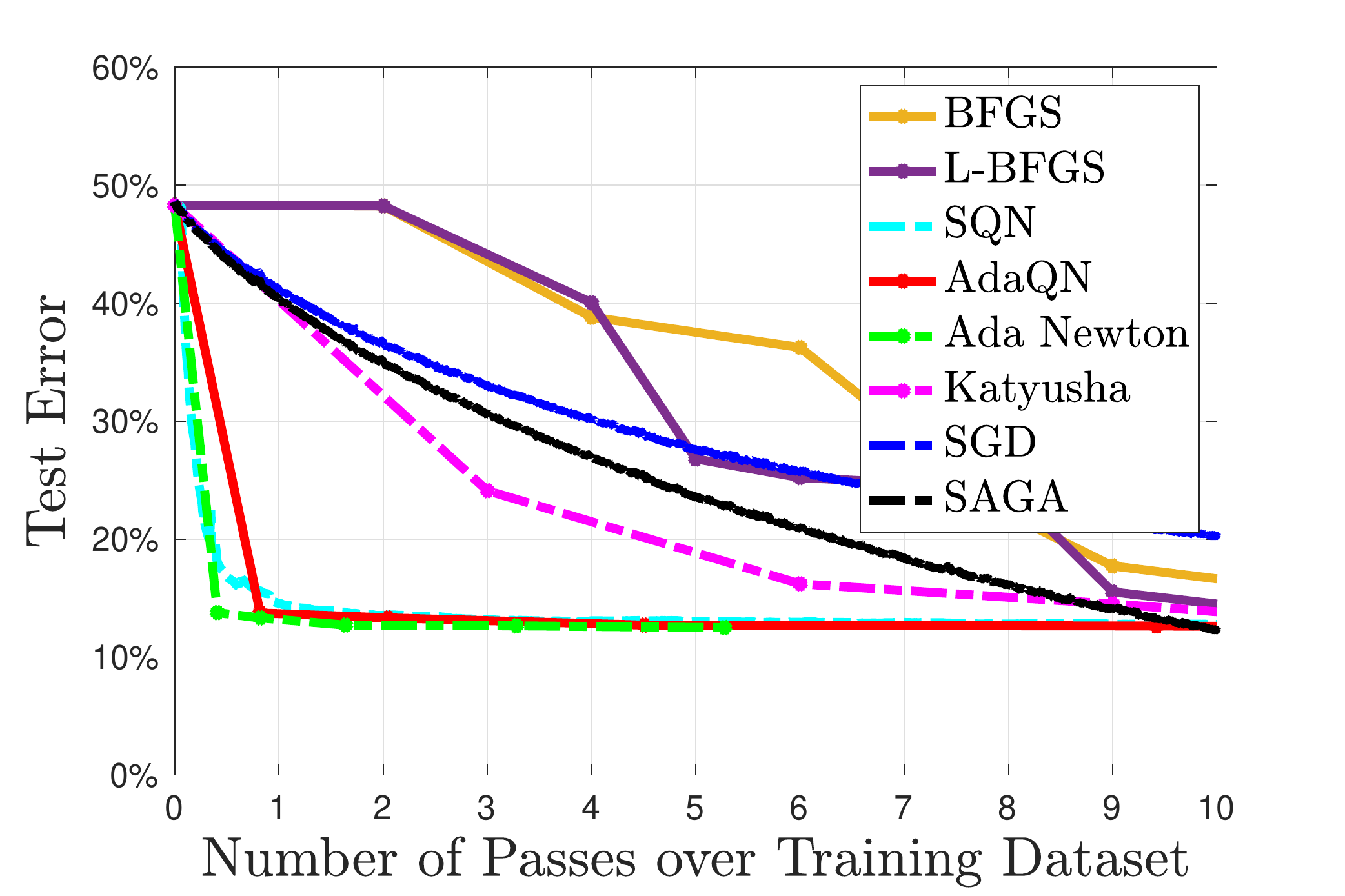}
    \hspace{-4mm}
    \includegraphics[width=0.25\linewidth]{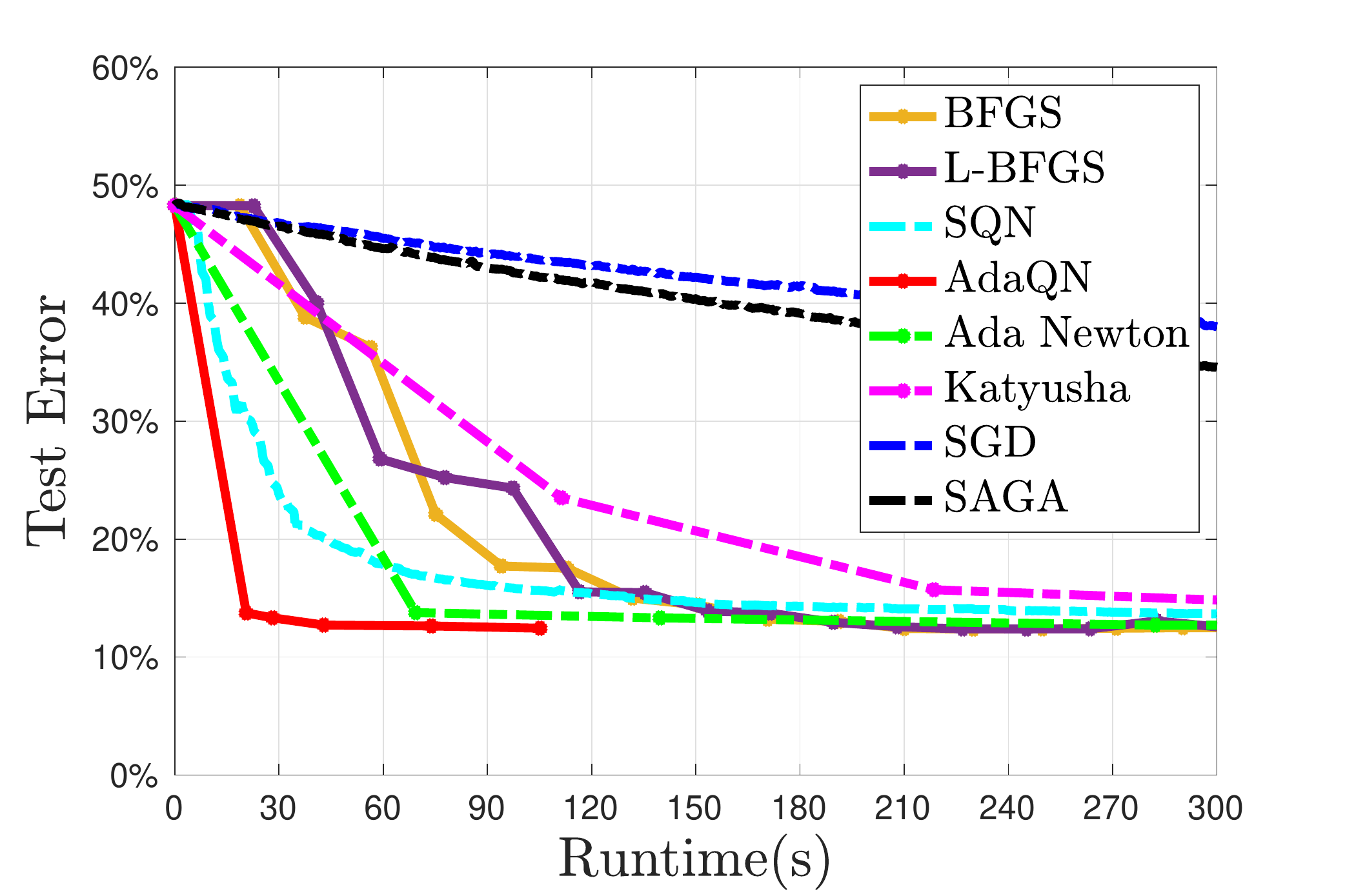}
  \vspace{-1mm}
  \caption{Training error (first two plots) and test error (last two plots) in terms of number of passes over dataset and runtime for Epsilon dataset.}
  \vspace{-5mm}
  \label{fig_4}
\end{figure}

Next we compare Ada Newton and AdaQN. As shown in Figure \ref{fig_1}, for the MNIST dataset with low dimension ($d=784$) and moderate number of samples ($N\approx11000$), AdaQN performs very similar to Ada Newton. However, for settings where either the problem dimension $d$ or the number of samples $N$ is very large AdaQN outperforms Ada Newton significantly in terms of runtime. For instance, in Figures \ref{fig_2} and \ref{fig_3}, which correspond to GISETTE dataset with $N =6000$ samples and $d=5000$ features and Orange dataset with $N =40000$ samples and $d=14000$ features, Ada Newton and AdaQN have almost similar convergence paths when compared in terms of number of passes over data, but in terms of runtime AdaQN is substantially faster than Ada Newton. This gap comes from the fact that in these two examples the problem dimension $d$ is large and as a result the cost of inverting a matrix at each iteration, which is required for Ada Newton, is prohibitive.

For Epsilon dataset that has a relatively large number of samples $N = 80,000$ and moderate dimension $d=2000$, as shown in Figure \ref{fig_4}, Ada Newton outperforms AdaQN in terms of number of passes over data in both training and test errors. However, their relative performance is reversed when we compare them in terms of runtime. This time the slow runtime convergence of Ada Newton is due to the fact that sample size $N$ is very large. Note that Ada~Newton requires $2N$ Hessian computations, while for AdaQN we only need $m_0$ Hessian evaluations.

\begin{remark}
Theorem~\ref{theorem_1} requires the size of the initial training set $m_0$ to satisfy the condition in~\eqref{theorem_1_1}. In our experiments, however, we observe that $m_0$ could be much smaller than the threshold in \eqref{theorem_1_1}. We believe that the main reason for this mismatch is that the recent non-asymptotic convergence analysis of BFGS that we exploit in our analysis may not be tight, and there is room for improving this bound. Specifically, establishing a less strict condition on the initial Hessian approximation error of BFGS required for achieving a superlinear convergence rate could lead to a much smaller lower bound for the parameter. Another possible reason for the gap between our theory and experiments is that our analysis is a worst-case analysis and it studies a lower bound that works for any setting. There could be some hard instances or corner cases that our current experiments do not capture, and for these hard instances we might need the number of samples required by our theoretical study. Identifying hard instances of empirical risk minimization problems for which a larger initial sample size is required is a research direction that requires further investigation.
\end{remark}

\vspace{-0.5mm}
\section{Discussion and future work}
\vspace{-0.5mm}

We proposed AdaQN algorithm that leverages the interplay between the superlinear convergence of quasi-Newton methods and statistical accuracy of ERM problems to efficiently minimize the ERM corresponding to a large dataset. We showed that if the initial sample size $m_0$ is sufficiently large, then we can double the size of training set at each iteration and use BFGS method to solve ERM subproblems, while we ensure the solution from previous round stays within the superlinear convergence neighborhood of the next problem. As a result, each subproblem can be solved with at most three BFGS updates with step size $1$. In comparison with Ada~Newton, AdaQN has three major advantages. (I) Ada~Newton requires a backtracking technique to determine the growth factor, while for AdaQN we can double the size of training set if the size of initial set is sufficiently large. (II) Ada~Newton implementation requires computing the Hessian inverse at each iteration, while AdaQN only requires computing a single matrix inversion. (III) Ada Newton requires $2N$ Hessian computations overall, while AdaQN only needs $m_0$ Hessian evaluations at the initialization step.

In this paper, we showed that using quasi-Newton methods we only require constant number of iterations that is independent of the condition number $\kappa$ and the dimension $d$ to solve each subproblem to its statistical accuracy. More specifically, as the superlinear convergence rate of $({1}/{\sqrt{t}})^{t}$ for BFGS is independent of the problem parameters, each subproblem can be solved with at most three iterations. On the other hand, any other second-order method that enjoys a local convergence rate (either linear or superlinear) that is independent of the problem parameters could be capable of solving each subproblem to its statistical accuracy with a finite constant number of iterations that is independent of the condition number $\kappa$ and dimension $d$. Hence, studying the application of such second-order or quasi-Newton methods for the considered adaptive sample size scheme is an interesting direction of research that requires further investigation.

One limitation of our results is that AdaQN provably outperforms other benchmarks only in the regime that $N \gg  \max\left\{d, \kappa^2s\log{d}\right\}$. This is due to the lower bound on the size of initial training set $m_0$. Notice that in many large-scale learning problems where $N$ is extremely large, this condition usually holds. We believe this limitation of our analysis could be resolved by tighter non-asymptotic analysis of BFGS method. Moreover, our guarantees only hold for  convex settings and extension of our results to nonconvex settings for achieving first-order or second-order stationary points is a natural future direction.

\vspace{-1mm}
\section*{Acknowledgement}
\vspace{-1mm}

This research is supported in part by NSF Grant 2007668, ARO Grant W911NF2110226, the Machine Learning Laboratory at UT Austin, and the NSF AI Institute for Foundations of Machine Learning. 

\appendix

\section*{Appendix}

\section{Proof of propositions and the main theorem}\label{sec:proof}

We start the proof by providing the following lemmas. These lemmas play fundamental roles in the proof of all our propositions and Theorem~\ref{theorem_1}.

\begin{lemma}\label{lemma_1}
\textbf{Matrix Bernstein Inequality.} Consider a finite sequence $\{S_k\}_{k = 1}^n$ of independent random symmetric matrix of dimension $d$. Suppose that $\mathbb{E}\left[S_k\right] = 0$ and $\|S_k\| \leq B$ for all $k$. Define that $Z = \sum_{k = 1}^{n}S_k$ and we can bound the expectation,
\begin{equation}\label{lemma_1_1}
    \mathbb{E}\left[\|Z\|\right] \leq \sqrt{2\mathbb{V}\left[Z\right]\log{d}} + \frac{B}{3}\log{d},
\end{equation}
where $\mathbb{V}\left[Z\right] = \|\mathbb{E}\left[Z^2\right]\|$ is the matrix variance statistic of $Z$.
\end{lemma}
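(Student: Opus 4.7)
The plan is to prove this matrix concentration bound by the classical matrix Laplace transform method of Ahlswede--Winter as sharpened by Tropp, converting a moment generating function estimate into an expectation bound on the operator norm. Since $Z$ is symmetric, $\|Z\| = \max\{\lambda_{\max}(Z), \lambda_{\max}(-Z)\}$, so it suffices to bound $\mathbb{E}[\lambda_{\max}(Z)]$ (and then apply the same argument to the sequence $\{-S_k\}$, which has the same variance). The starting point is the Laplace transform inequality: for every $\theta > 0$,
\begin{equation*}
\exp\!\bigl(\theta\, \mathbb{E}[\lambda_{\max}(Z)]\bigr) \;\leq\; \mathbb{E}\bigl[\exp(\theta\, \lambda_{\max}(Z))\bigr] \;\leq\; \mathbb{E}\bigl[\operatorname{tr} \exp(\theta Z)\bigr],
\end{equation*}
where the first step is Jensen's inequality and the second uses $\exp(\lambda_{\max}(A)) \leq \operatorname{tr} \exp(A)$ for any symmetric $A$.

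The heart of the argument is controlling the trace MGF $\mathbb{E}[\operatorname{tr} \exp(\theta Z)]$. Here I would invoke Lieb's concavity theorem, which states that $X \mapsto \operatorname{tr} \exp(H + X)$ is concave on the symmetric matrices for any fixed symmetric $H$. Conditioning on $S_1,\dots,S_{n-1}$ and applying Jensen gives $\mathbb{E}[\operatorname{tr} \exp(\theta Z)] \leq \mathbb{E}\bigl[\operatorname{tr} \exp\bigl(\theta \sum_{k<n} S_k + \log \mathbb{E}[\exp(\theta S_n)]\bigr)\bigr]$, and iterating the argument yields
\begin{equation*}
\mathbb{E}\bigl[\operatorname{tr} \exp(\theta Z)\bigr] \;\leq\; \operatorname{tr} \exp\!\Bigl(\sum_{k=1}^n \log \mathbb{E}[\exp(\theta S_k)]\Bigr).
\end{equation*}
Next, for each centered summand with $\|S_k\| \leq B$, the elementary scalar inequality $e^{\theta x} \leq 1 + \theta x + g(\theta)\, x^2$ for $|x| \leq B$, with $g(\theta) = (e^{\theta B} - 1 - \theta B)/B^2$, lifts to the semidefinite bound $\mathbb{E}[\exp(\theta S_k)] \preceq \exp\!\bigl(g(\theta)\, \mathbb{E}[S_k^2]\bigr)$ using $\mathbb{E}[S_k] = 0$ and the operator monotonicity of $\log$. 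Plugging this into the trace bound, summing, and using $\sum_k \mathbb{E}[S_k^2] = \mathbb{E}[Z^2]$ gives $\mathbb{E}[\operatorname{tr} \exp(\theta Z)] \leq d \exp\!\bigl(g(\theta)\, \mathbb{V}[Z]\bigr)$.

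Combining the two displays and taking logarithms yields the master bound
\begin{equation*}
\mathbb{E}[\lambda_{\max}(Z)] \;\leq\; \frac{\log d}{\theta} + \frac{g(\theta)\, \mathbb{V}[Z]}{\theta}, \qquad \theta > 0.
\end{equation*}
The remaining step is to optimize over $\theta$. Using the sharper estimate $g(\theta)/\theta \leq \theta / (2(1 - B\theta/3))$ valid for $0 < \theta < 3/B$, and choosing $\theta$ to balance the two terms (the optimal choice is of order $\sqrt{2\log d / \mathbb{V}[Z]}$, regularized by $B$), gives $\mathbb{E}[\lambda_{\max}(Z)] \leq \sqrt{2\mathbb{V}[Z] \log d} + \tfrac{1}{3} B \log d$. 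Repeating the argument for $-Z$ and taking a maximum yields the claimed bound on $\mathbb{E}[\|Z\|]$.

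The main obstacle is Lieb's concavity theorem, which is the nontrivial piece of matrix analysis driving the whole argument; I would cite it rather than reprove it, as its proof via joint convexity of quantum relative entropy is a substantial detour. A secondary subtlety is the final optimization over $\theta$: the naive completion of the square gives the $\sqrt{2\mathbb{V}[Z]\log d}$ term cleanly, but obtaining the exact constant $\tfrac{1}{3}$ in front of $B \log d$ requires carefully handling the region $\theta$ near $3/B$ where $g(\theta)$ blows up, which is why the sharpened rational bound on $g(\theta)/\theta$ above is needed.
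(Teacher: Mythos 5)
The paper does not actually prove this lemma: its entire ``proof'' is a citation to Theorem 6.6.1 of Tropp's matrix concentration monograph, and what you have written is essentially a sketch of Tropp's own proof of that theorem (the matrix Laplace transform method, subadditivity of the trace MGF via Lieb's concavity theorem, the Bernstein-type semidefinite bound on the matrix MGF, and the rational bound on $g(\theta)/\theta$ followed by optimization over $\theta$). All of those steps are correct and standard, and your constants in the optimization do come out as claimed.

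However, your final step contains a genuine flaw. You bound $\mathbb{E}[\lambda_{\max}(Z)]$ and $\mathbb{E}[\lambda_{\max}(-Z)]$ separately and then say that ``taking a maximum yields the claimed bound on $\mathbb{E}[\|Z\|]$.'' Since $\|Z\| = \max\{\lambda_{\max}(Z), \lambda_{\max}(-Z)\}$ and $\mathbb{E}[\max\{X,Y\}] \geq \max\{\mathbb{E}[X], \mathbb{E}[Y]\}$ in general, knowing the two expectations individually does not bound the expectation of the maximum. The correct way to handle the two tails is to merge them inside the MGF \emph{before} taking expectations:
\begin{equation*}
e^{\theta\,\mathbb{E}[\|Z\|]} \;\leq\; \mathbb{E}\bigl[e^{\theta\|Z\|}\bigr] \;\leq\; \mathbb{E}\bigl[\operatorname{tr} e^{\theta Z} + \operatorname{tr} e^{-\theta Z}\bigr] \;\leq\; 2d\,\exp\bigl(g(\theta)\,\mathbb{V}[Z]\bigr),
\end{equation*}
or equivalently, apply your $\lambda_{\max}$ argument to the $2d$-dimensional block matrix $\mathrm{diag}(Z,-Z)$, whose summands are bounded by $B$ and whose variance statistic is again $\mathbb{V}[Z]$. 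This costs $\log(2d)$ in place of $\log d$, so the bound your argument actually delivers is $\mathbb{E}[\|Z\|] \leq \sqrt{2\mathbb{V}[Z]\log(2d)} + \tfrac{B}{3}\log(2d)$ --- which is exactly what the cited Theorem 6.6.1 gives for symmetric matrices, since its dimension factor is $\log(d_1+d_2)$. Note in passing that the lemma as stated in the paper, with $\log d$ attached to $\mathbb{E}[\|Z\|]$ rather than to $\mathbb{E}[\lambda_{\max}(Z)]$, is not literally correct --- at $d=1$ it would assert $\mathbb{E}[|Z|] \leq 0$ --- but the $\log d$ versus $\log(2d)$ discrepancy is immaterial everywhere the lemma is used, since only the order of magnitude enters the paper's bounds.
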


\begin{proof}
Check Theorem 6.6.1 of \cite{matrixconcentration}.
\end{proof}

\begin{lemma}\label{lemma_2}
Suppose Assumptions \ref{assum_1}-\ref{assum_2} hold and the sample size $n = \Omega(\log{d})$. Then the expectation of the difference between Hessian of the empirical risk $R_n$ and expected risk $R$ is bounded above by $\mathcal{O}\left(L\sqrt{\frac{s\log{d}}{n}}\right)$ in terms of the Frobenius norm for any $\mathbf{w}$, i.e,
\begin{equation}\label{lemma_2_1}
    \mathbb{E}\left[\|\nabla^2{R_n}(\mathbf{w}) - \nabla^2{R}(\mathbf{w})\|_{F}\right] = \mathcal{O}\left(L\sqrt{\frac{s\log{d}}{n}}\right),
\end{equation}
\end{lemma}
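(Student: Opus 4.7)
\textbf{Proof proposal for Lemma~\ref{lemma_2}.} The plan is to first obtain an operator-norm concentration bound for $\nabla^2 R_n(\mathbf{w}) - \nabla^2 R(\mathbf{w})$ via the Matrix Bernstein inequality (Lemma~\ref{lemma_1}), and then convert this to a Frobenius-norm bound using the very definition of the parameter $s$. The key structural observation is that $\nabla^2 R_n(\mathbf{w}) - \nabla^2 R(\mathbf{w})$ decomposes naturally as a sum of i.i.d.\ centered symmetric matrices, which makes the matrix Bernstein inequality directly applicable.

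Concretely, I would define, for each sample index $k = 1, \dots, n$,
\[
    S_k := \frac{1}{n}\left[\nabla^2 f(\mathbf{w};\mathbf{z}_k) - \nabla^2 R(\mathbf{w})\right],
\]
so that $Z := \sum_{k=1}^n S_k = \nabla^2 R_n(\mathbf{w}) - \nabla^2 R(\mathbf{w})$. Since the samples are drawn i.i.d.\ from $P$, each $S_k$ is symmetric with $\mathbb{E}[S_k] = 0$. Assumption~\ref{assum_1} gives $\|\nabla^2 f(\mathbf{w};\mathbf{z})\| \le L$ pointwise and hence $\|\nabla^2 R(\mathbf{w})\| \le L$, so by the triangle inequality I get the uniform bound $\|S_k\| \le 2L/n =: B$. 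For the variance, using $\mathbb{E}[Z^2] = \sum_k \mathbb{E}[S_k^2]$ together with $\|\mathbb{E}[S_k^2]\| \le \mathbb{E}[\|S_k\|^2] \le 4L^2/n^2$, the triangle inequality yields $\mathbb{V}[Z] = \|\mathbb{E}[Z^2]\| \le 4L^2/n$. Plugging these into \eqref{lemma_1_1} gives
\[
    \mathbb{E}\!\left[\|\nabla^2 R_n(\mathbf{w}) - \nabla^2 R(\mathbf{w})\|\right] \le \sqrt{\frac{8 L^2 \log d}{n}} + \frac{2L \log d}{3n}.
\]
Under the hypothesis $n = \Omega(\log d)$, the first term dominates, so the operator-norm deviation is of order $\mathcal{O}\!\left(L\sqrt{\log d / n}\right)$.

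The final step is to pass from the operator norm to the Frobenius norm using the definition of $s$. By construction,
\[
    s = \sup_{\mathbf{w}, n}\left(\frac{\mathbb{E}\left[\|\nabla^2 R_n(\mathbf{w}) - \nabla^2 R(\mathbf{w})\|_F\right]}{\mathbb{E}\left[\|\nabla^2 R_n(\mathbf{w}) - \nabla^2 R(\mathbf{w})\|\right]}\right)^{\!2},
\]
so for any $\mathbf{w}$ and $n$ we have $\mathbb{E}[\|\nabla^2 R_n(\mathbf{w}) - \nabla^2 R(\mathbf{w})\|_F] \le \sqrt{s}\cdot \mathbb{E}[\|\nabla^2 R_n(\mathbf{w}) - \nabla^2 R(\mathbf{w})\|]$. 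Combining this with the operator-norm bound established above yields the claimed rate $\mathcal{O}\!\left(L\sqrt{s \log d / n}\right)$.

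The main obstacle, and really the only nontrivial point, is correctly bounding the matrix variance $\mathbb{V}[Z]$: one has to be careful that $\|\mathbb{E}[S_k^2]\| \le \mathbb{E}[\|S_k^2\|]$ (an application of Jensen's inequality for the spectral norm, which follows from convexity of $\|\cdot\|$) and that $\|S_k^2\| = \|S_k\|^2$ since $S_k$ is symmetric. The rest of the argument is a routine instantiation of Lemma~\ref{lemma_1} and a direct appeal to the definition of $s$, so no further technical work is anticipated.
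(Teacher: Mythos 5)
Your proposal is correct and follows essentially the same route as the paper's own proof: the identical decomposition $S_k = \frac{1}{n}[\nabla^2 f(\mathbf{w};\mathbf{z}_k) - \nabla^2 R(\mathbf{w})]$, the same bounds $B = 2L/n$ and $\mathbb{V}[Z] \leq 4L^2/n$, the same application of the Matrix Bernstein inequality under $n = \Omega(\log d)$, and the same final conversion to the Frobenius norm via the definition of $s$. The only difference is that you spell out the justification for $\|\mathbb{E}[S_k^2]\| \leq \mathbb{E}[\|S_k\|^2]$ slightly more explicitly than the paper does, which is a welcome but inessential refinement.
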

where $s = \sup_{\mathbf{w}, n}\left(\frac{\mathbb{E}\left[\|\nabla^2{R_n}(\mathbf{w}) - \nabla^2{R}(\mathbf{w})\|_F\right]}{\mathbb{E}\left[\|\nabla^2{R_n}(\mathbf{w}) - \nabla^2{R}(\mathbf{w})\|\right]}\right)^2$.

\begin{proof}
We use Lemma~\ref{lemma_1} and define that $S_k = \frac{1}{n}[\nabla^2{f(\mathbf{w};\textbf{z}_k)} - \nabla^2{R(\mathbf{w})}]$ and thus $Z = \sum_{k = 1}^{n}S_k = \nabla^2{R_n(\mathbf{w})} - \nabla^2{R(\mathbf{w})}$. Notice that $\mathbb{E}\left[S_k\right] = 0$ and $\|S_k\| \leq \frac{2L}{n}$ for all $k$ so we know that $B = \frac{2L}{n}$. Since each sample is drawn independently we get
\begin{equation*}
\mathbb{V}\left[Z\right] = \|\sum_{k = 1}^{n}\mathbb{E}\left[S^2_k\right]\| \leq \sum_{k = 1}^{n}\|\mathbb{E}\left[S^2_k\right]\| \leq \sum_{k = 1}^{n}\mathbb{E}\left[\|S_k\|^2\right] \leq \sum_{k = 1}^{n}\frac{4L^2}{n^2} = \frac{4L^2}{n}.
\end{equation*}
By the Matrix Bernstein Inequality of \eqref{lemma_1_1} we know,
\begin{equation*}
    \mathbb{E}\left[\|\nabla^2{R_n}(\mathbf{w}) - \nabla^2{R}(\mathbf{w})\|\right] \leq \sqrt{\frac{8L^2\log{d}}{n}} + \frac{2L}{3n}\log{d} = (2\sqrt{2}L + \frac{2L}{3}\sqrt{\frac{\log{d}}{n}})\sqrt{\frac{\log{d}}{n}}.
\end{equation*}
Notice that the sample size $n = \Omega(\log{d})$, thus we have that
\begin{equation*}
2\sqrt{2}L + \frac{2L}{3}\sqrt{\frac{\log{d}}{n}} = \mathcal{O}(L),
\end{equation*}
\begin{equation*}
    \mathbb{E}\left[\|\nabla^2{R_n}(\mathbf{w}) - \nabla^2{R}(\mathbf{w})\|\right] = \mathcal{O}\left(L\sqrt{\frac{\log{d}}{n}}\right).
\end{equation*}
By the definition of $s = \sup_{\mathbf{w}, n}\left(\frac{\mathbb{E}\left[\|\nabla^2{R_n}(\mathbf{w}) - \nabla^2{R}(\mathbf{w})\|_F\right]}{\mathbb{E}\left[\|\nabla^2{R_n}(\mathbf{w}) - \nabla^2{R}(\mathbf{w})\|\right]}\right)^2$ we know 
\begin{equation*}
\mathbb{E}\left[\|\nabla^2{R_n(\mathbf{w})} - \nabla^2{R(\mathbf{w})}\|_{F}\right] \leq \sqrt{s}\mathbb{E}\left[\|\nabla^2{R_n(\mathbf{w})} - \nabla^2{R(\mathbf{w})}\|\right],
\end{equation*}
thus we have
\begin{equation*}
    \mathbb{E}\left[\|\nabla^2{R_n(\mathbf{w})} - \nabla^2{R(\mathbf{w})}\|_{F}\right] = \mathcal{O}\left(L\sqrt{\frac{s\log{d}}{n}}\right).
\end{equation*}
This is correct for all $\mathbf{w}$ so we prove the conclusion \eqref{lemma_2_1}.
\end{proof}

\begin{lemma}\label{lemma_3}
Suppose $S_m$ and $S_n$ are two datasets and $S_m \subset S_n \subset \mathcal{T}$. Assume that there are $m$ samples in $S_m$ and $n$ samples in $S_n$ and $n \geq m = \Omega(\log{d})$. Consider the corresponding empirical risk loss function $R_m$ and $R_n$ defined on $S_m$ and $S_n$, respectively. Then for any $\mathbf{w}$ the expectation of the difference between their Hessians is bounded above by
\begin{equation}\label{lemma_3_1}
    \mathbb{E}\left[\|\nabla^2{R_n}(\mathbf{w}) - \nabla^2{R_m}(\mathbf{w})\|_{F}\right] =  \mathcal{O}\left(L\sqrt{\frac{s\log{d}}{m}}\right),
\end{equation}
where $s = \sup_{\mathbf{w}, n}\left(\frac{\mathbb{E}\left[\|\nabla^2{R_n}(\mathbf{w}) - \nabla^2{R}(\mathbf{w})\|_F\right]}{\mathbb{E}\left[\|\nabla^2{R_n}(\mathbf{w}) - \nabla^2{R}(\mathbf{w})\|\right]}\right)^2$.
\end{lemma}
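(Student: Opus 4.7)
The plan is to prove Lemma~\ref{lemma_3} by inserting the expected risk Hessian $\nabla^2 R(\mathbf{w})$ as a pivot and invoking Lemma~\ref{lemma_2} twice. Specifically, by the triangle inequality for the Frobenius norm,
\begin{equation*}
\|\nabla^2 R_n(\mathbf{w}) - \nabla^2 R_m(\mathbf{w})\|_F \leq \|\nabla^2 R_n(\mathbf{w}) - \nabla^2 R(\mathbf{w})\|_F + \|\nabla^2 R_m(\mathbf{w}) - \nabla^2 R(\mathbf{w})\|_F,
\end{equation*}
and taking expectations preserves the inequality. Here we are free to use the linearity of expectation without worrying about the coupling $S_m \subset S_n$, because each term is individually a function of a single i.i.d.\ subsample (namely, $S_n$ for the first and $S_m$ for the second), which is exactly the setting Lemma~\ref{lemma_2} was stated for.

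Next, since the hypothesis $m = \Omega(\log d)$ together with $n \geq m$ gives $n = \Omega(\log d)$ as well, Lemma~\ref{lemma_2} applies to both empirical risks and yields
\begin{equation*}
\mathbb{E}\!\left[\|\nabla^2 R_n(\mathbf{w}) - \nabla^2 R(\mathbf{w})\|_F\right] = \mathcal{O}\!\left(L\sqrt{\tfrac{s\log d}{n}}\right), \qquad \mathbb{E}\!\left[\|\nabla^2 R_m(\mathbf{w}) - \nabla^2 R(\mathbf{w})\|_F\right] = \mathcal{O}\!\left(L\sqrt{\tfrac{s\log d}{m}}\right).
\end{equation*}
Since $n \geq m$, the $m$-term dominates (i.e., $\sqrt{1/n} \leq \sqrt{1/m}$), so summing the two bounds and absorbing constants gives $\mathbb{E}[\|\nabla^2 R_n(\mathbf{w}) - \nabla^2 R_m(\mathbf{w})\|_F] = \mathcal{O}(L\sqrt{s\log d/m})$, which is exactly \eqref{lemma_3_1}. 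Because the bound in Lemma~\ref{lemma_2} is uniform in $\mathbf{w}$ (both the constants and the parameter $s$ are defined as suprema over $\mathbf{w}$ and $n$), the conclusion also holds uniformly in $\mathbf{w}$.

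There is essentially no obstacle here: the lemma is a direct corollary of Lemma~\ref{lemma_2} via the triangle inequality. The only subtlety worth mentioning is the potential worry that the nested structure $S_m \subset S_n$ could make the two Hessian deviations statistically dependent, but this is immaterial since the triangle inequality is a pointwise bound on the random matrices themselves, so no independence between the two deviation terms is required to take expectations. Hence the proof is a short deduction, and no further tools beyond Lemma~\ref{lemma_2} and the monotonicity $\sqrt{1/n}\leq\sqrt{1/m}$ are needed.
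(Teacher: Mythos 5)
Your proposal is correct and follows essentially the same argument as the paper's proof: the triangle inequality with $\nabla^2 R(\mathbf{w})$ as the pivot, two applications of Lemma~\ref{lemma_2} (valid since $n \geq m = \Omega(\log d)$), and the monotonicity $\sqrt{1/n} \leq \sqrt{1/m}$ to combine the bounds. Your added remark that the coupling $S_m \subset S_n$ is harmless because the triangle inequality holds pointwise is a nice clarification, but does not change the route.
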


\begin{proof}
Using triangle inequality we have that
\begin{equation*}
    \|\nabla^2{R_n}(\mathbf{w}) - \nabla^2{R_m}(\mathbf{w})\|_{F} \leq \|\nabla^2{R_n}(\mathbf{w}) - \nabla^2{R}(\mathbf{w})\|_{F} + \|\nabla^2{R_m}(\mathbf{w}) - \nabla^2{R}(\mathbf{w})\|_{F}.
\end{equation*}
By \eqref{lemma_2_1} of Lemma~\ref{lemma_2} and $n \geq m = \Omega(\log{d})$, we have
\begin{equation*}
\mathbb{E}\left[\|\nabla^2{R_m}(\mathbf{w}) - \nabla^2R(\mathbf{w})\|_{F}\right]  = \mathcal{O}\left(L\sqrt{\frac{s\log{d}}{m}}\right).
\end{equation*}
\begin{equation*}
\mathbb{E}\left[\|\nabla^2{R_n}(\mathbf{w}) - \nabla^2R(\mathbf{w})\|_{F}\right]  = \mathcal{O}\left(L\sqrt{\frac{s\log{d}}{n}}\right) = \mathcal{O}\left(L\sqrt{\frac{s\log{d}}{m}}\right).
\end{equation*}
Leveraging the above three inequalities we prove the conclusion \eqref{lemma_3_1}.
\end{proof}

\begin{lemma}\label{lemma_4}
Assume that $f(x)$ is a strictly convex and self-concordant function. Suppose that $x, y \in dom(f)$, then we have
\begin{equation}\label{lemma_4_1}
    f(y) \geq f(x) + \nabla{f(x)}^\top(y - x) + w(\|\nabla^{2}f(x)^\frac{1}{2}(y - x)\|),
\end{equation}
where $w(t) := t - \ln{(1 + t)}$ and for $t \geq 0$ we have
\begin{equation}\label{lemma_4_2}
    w(t) \geq \frac{t^2}{2(1 + t)}.
\end{equation}
Moreover, if $x, y \in dom(f)$ satisfy that $\|\nabla^{2}f(x)^\frac{1}{2}(y - x)\| \leq \frac{1}{2}$, then we have:
\begin{equation}\label{lemma_4_3}
    \frac{1}{1 + 6\|\nabla^{2}f(x)^\frac{1}{2}(y - x)\|}\nabla^{2}f(x) \preceq \nabla^{2}f(y) \preceq (1 + 6\|\nabla^{2}f(x)^\frac{1}{2}(y - x)\|)\nabla^{2}f(x).
\end{equation}
\end{lemma}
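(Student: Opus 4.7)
The plan is to reduce all three claims to the one-dimensional self-concordance machinery by restricting $f$ to the line segment joining $x$ and $y$. Specifically, I would introduce $\phi(t) := f(x + t(y-x))$ for $t \in [0,1]$, which is strictly convex and self-concordant as a function of one variable (self-concordance is preserved by affine restriction). Set $r := \|\nabla^2 f(x)^{1/2}(y-x)\|$, so $\phi'(0) = \nabla f(x)^\top (y-x)$ and $\phi''(0) = r^2$. The central tool from 1D self-concordance is that $t \mapsto \phi''(t)^{-1/2}$ is $1$-Lipschitz, which follows by integrating the defining inequality $|\phi'''(t)| \le 2\phi''(t)^{3/2}$.

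For inequality \eqref{lemma_4_1}, I would use the 1-Lipschitz property to obtain $\phi''(t)^{-1/2} \le 1/r + t$ for $t \in [0,1]$, hence $\phi''(t) \ge r^2/(1+rt)^2$. Writing
\begin{equation*}
f(y) - f(x) - \nabla f(x)^\top(y-x) \;=\; \phi(1) - \phi(0) - \phi'(0) \;=\; \int_0^1 (1-t)\,\phi''(t)\,dt
\end{equation*}
by integration by parts, and plugging in the pointwise lower bound, the task reduces to evaluating $\int_0^1 (1-t) r^2/(1+rt)^2\, dt$. The substitution $u = 1+rt$ turns this into $\int_1^{1+r}[(r+1)/u^2 - 1/u]\, du$, which is an elementary integral equal to $r - \ln(1+r) = w(r)$. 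For \eqref{lemma_4_2}, I would just do calculus on $g(t) := w(t) - t^2/[2(1+t)]$: one computes $g'(t) = t^2/[2(1+t)^2] \ge 0$, and since $g(0) = 0$ the claim follows on $[0,\infty)$.

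For the Hessian bound \eqref{lemma_4_3}, I would appeal to the standard Dikin-ball stability estimate for self-concordant functions: whenever $r = \|\nabla^2 f(x)^{1/2}(y-x)\| < 1$, one has $(1-r)^2 \nabla^2 f(x) \preceq \nabla^2 f(y) \preceq (1-r)^{-2} \nabla^2 f(x)$, again obtained by restriction to the segment and using the 1-Lipschitzness of $\phi''^{-1/2}$. It then suffices to verify the two polynomial inequalities $(1-r)^{-2} \le 1+6r$ and $(1-r)^{2} \ge 1/(1+6r)$ on $r \in [0, 1/2]$. Both reduce to checking $(1-r)^2(1+6r) \ge 1$; expanding gives $r(4 - 11r + 6r^2) \ge 0$, and since the quadratic $6r^2 - 11r + 4$ has roots $1/2$ and $4/3$, it is nonnegative on $[0,1/2]$, which closes the argument.

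The only genuinely delicate step I expect is matching the constant $6$ in part \eqref{lemma_4_3}: the tight Dikin bounds $(1-r)^{\pm 2}$ agree with $1+6r$ and $1/(1+6r)$ only at the endpoints $r=0$ and $r=1/2$, so the inequality is sharp at both endpoints and the polynomial check above is what makes the constant $6$ work precisely on the interval $[0,1/2]$. The rest is standard self-concordance calculus (reduction to a line, the 1-Lipschitz property of $\phi''^{-1/2}$, and a clean integration by parts).
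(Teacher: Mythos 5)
Your proof is correct, and it differs from the paper's in how much it actually proves. The paper's proof of this lemma is essentially a citation: inequalities \eqref{lemma_4_1} and \eqref{lemma_4_2} are outsourced to Theorem 4.1.7 and Lemma 5.1.5 of Nesterov's book, and \eqref{lemma_4_3} is obtained by quoting the Dikin-ball estimate $(1-r)^2\nabla^2 f(x) \preceq \nabla^2 f(y) \preceq (1-r)^{-2}\nabla^2 f(x)$ (Theorem 4.1.6 there) followed by the elementary bound $\frac{1}{(1-a)^2} = 1 + \frac{2-a}{(1-a)^2}a \leq 1+6a$ on $[0,\tfrac{1}{2}]$. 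You instead reconstruct the textbook results: your line-restriction argument, the $1$-Lipschitzness of $\phi''(t)^{-1/2}$, the lower bound $\phi''(t) \geq r^2/(1+rt)^2$, and the integration by parts yielding exactly $r - \ln(1+r)$ are precisely the standard proofs behind Nesterov's theorems, and your monotonicity argument for \eqref{lemma_4_2} (with $g'(t) = t^2/[2(1+t)^2]$) is a clean direct verification. For \eqref{lemma_4_3} your route coincides with the paper's: same Dikin estimate, and your polynomial check $(1-r)^2(1+6r) \geq 1$, i.e. $r(6r^2-11r+4) \geq 0$ with roots $\tfrac{1}{2}$ and $\tfrac{4}{3}$, is equivalent to the paper's algebra (and your observation that the constant $6$ is sharp at both endpoints is a nice addition). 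What your approach buys is self-containedness; what the paper's buys is brevity.

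One small imprecision worth flagging: you assert that the two-sided Hessian bound is obtained "again by restriction to the segment and the $1$-Lipschitzness of $\phi''^{-1/2}$." Restricting along the single segment from $x$ to $y$ only controls $(y-x)^\top \nabla^2 f(\cdot)(y-x)$; to bound $\nabla^2 f(y)$ as a matrix in \emph{all} directions one needs, for each fixed direction $u$, the function $\psi(t) = u^\top \nabla^2 f(x+t(y-x))u$ and the bound $|\psi'(t)| \leq 2\psi(t)\,\|\nabla^2 f(x+t(y-x))^{1/2}(y-x)\|$, which rests on the trilinear-form consequence of self-concordance rather than on the one-dimensional restriction alone. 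This is exactly the content of the result the paper cites, so your appeal to "the standard Dikin-ball stability estimate" is legitimate, but the parenthetical justification as stated would not compile into a proof.
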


\begin{proof}
Check Theorem 4.1.7 and Lemma 5.1.5 of \citep{nesterov2004introductory} for \eqref{lemma_4_1} and \eqref{lemma_4_2}. If $x, y \in dom(f)$ satisfy that $\|\nabla^{2}f(x)^\frac{1}{2}(y - x)\| \leq \frac{1}{2} < 1$, by Theorem 4.1.6 of \citep{nesterov2004introductory} we obtain that

\begin{equation}\label{proof_lemma_4_1}
    (1 - \|\nabla^{2}f(x)^\frac{1}{2}(y - x)\|)^2\nabla^{2}f(x) \preceq \nabla^{2}f(y) \preceq \frac{1}{(1 - \|\nabla^{2}f(x)^\frac{1}{2}(y - x)\|)^2}\nabla^{2}f(x).
\end{equation}

Notice that for $a \in [0, \frac{1}{2}]$ we have

\begin{equation}\label{proof_lemma_4_2}
\frac{1}{(1 - a)^2} = 1 + \frac{2 - a}{(1 - a)^2}a \leq 1 + 6a.
\end{equation}

Combining \eqref{proof_lemma_4_1} and \eqref{proof_lemma_4_2} we can obtain that  \eqref{lemma_4_3} holds. 
\end{proof}

\begin{lemma}\label{lemma_6}
Suppose $t > 0$ satisfies the following condition
\begin{equation}\label{lemma_6_1}
    t \geq C_1 + \ln{\frac{1}{C_2}},
\end{equation}
where $C_1$ and $C_2$ are two positive constants. Then we can obtain the following inequality
\begin{equation}\label{lemma_6_2}
    \left(\frac{C_1}{t}\right)^{t} \leq C_2.
\end{equation}
\end{lemma}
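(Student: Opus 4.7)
The plan is to take logarithms and reduce the claim to an elementary scalar inequality. Taking the natural logarithm of both sides of \eqref{lemma_6_2}, the conclusion becomes equivalent to
\[
t \ln\!\left(\frac{t}{C_1}\right) \;\geq\; \ln\!\left(\frac{1}{C_2}\right),
\]
so it suffices to produce a lower bound on $t\ln(t/C_1)$ that we can compare against the right-hand side, using only the hypothesis $t \geq C_1 + \ln(1/C_2)$.

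The key intermediate step is the well-known inequality $\ln u \geq 1 - 1/u$ for $u \geq 1$ (equivalent to $\ln v \leq v-1$ with $v=1/u$). Multiplying by $u \geq 1$ gives $u \ln u \geq u - 1$. Setting $u = t/C_1$ (which is $\geq 1$ since the hypothesis implies $t \geq C_1$ whenever $C_2 \leq 1$, the relevant regime) yields
\[
t\,\ln\!\left(\frac{t}{C_1}\right) \;=\; C_1 \cdot \frac{t}{C_1}\ln\!\left(\frac{t}{C_1}\right) \;\geq\; C_1\!\left(\frac{t}{C_1}-1\right) \;=\; t - C_1.
\]
Now combine this with the hypothesis $t - C_1 \geq \ln(1/C_2)$ from \eqref{lemma_6_1} to chain the inequalities and recover the displayed logarithmic form of \eqref{lemma_6_2}, then exponentiate.

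The only subtlety worth noting is the edge case where $C_2 \geq 1$: then $\ln(1/C_2) \leq 0$, so the hypothesis does not force $t \geq C_1$, and the monotonic step above could fail. However, in that regime $C_2 \geq 1 \geq (C_1/t)^t$ whenever $t \geq C_1$, and otherwise the bound is only used in the paper with $C_2$ small (representing statistical accuracy or a convergence tolerance), so the case $C_2 \leq 1$ is what matters. I would state the lemma's proof cleanly under $0 < C_2 \leq 1$ and remark that the other case is either vacuous or trivially handled. There is no substantial obstacle here; the entire argument is a two-line calculus fact once the logarithmic reformulation is made.
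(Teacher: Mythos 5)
Your proposal is correct and is essentially the paper's own proof: both take logarithms to reduce \eqref{lemma_6_2} to $t\ln(t/C_1) \geq \ln(1/C_2)$ and then invoke $\ln u \geq 1 - 1/u$ to get $t\ln(t/C_1) \geq t - C_1$, which the hypothesis \eqref{lemma_6_1} bounds below by $\ln(1/C_2)$. The only difference is that your edge-case worry about $C_2 \geq 1$ is unnecessary: the inequality $\ln u \geq 1 - 1/u$ holds for \emph{all} $u > 0$ (not just $u \geq 1$), so multiplying it by the positive quantity $t = C_1 u$ yields $t\ln(t/C_1) \geq t - C_1$ unconditionally, and the lemma as stated holds for every $C_2 > 0$ with no case split, exactly as in the paper.
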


\begin{proof}
Denote $s = \frac{t}{C_1}$ and based on \eqref{lemma_6_2} we have the following inequality
\begin{equation*}
\left(\frac{1}{s}\right)^{C_1 s} \leq C_2.
\end{equation*}
This is equivalent to
\begin{equation*}
s^{C_1 s} \geq \frac{1}{C_2}.
\end{equation*}
Take the nature logarithm on both sides we get
\begin{equation}\label{proof_lemma_6_1}
C_1 s\ln{s} \geq \ln{\frac{1}{C_2}}.
\end{equation}
This shows that condition \eqref{proof_lemma_6_1} is equivalent to condition \eqref{lemma_6_2}. Notice that for any $x > 0$ we have the following inequality
\begin{equation*}
\ln{x} \geq 1 - \frac{1}{x}.
\end{equation*}
So if $s$ satisfies
\begin{equation}\label{proof_lemma_6_2}
    C_1 s (1 - \frac{1}{s}) \geq \ln{\frac{1}{C_2}},
\end{equation}
we can derive that
\begin{equation*}
C_1 s\ln{s} \geq C_1 s (1 - \frac{1}{s}) \geq \ln{\frac{1}{C_2}}.
\end{equation*}
And since $t = C_1 s$ the condition \eqref{proof_lemma_6_2} is equivalent to
\begin{equation*}
C_1 s - C_1 \geq \ln{\frac{1}{C_2}},
\end{equation*}
\begin{equation*}
t \geq C_1 + \ln{\frac{1}{C_2}}.
\end{equation*}
This is just the condition \eqref{lemma_6_1}. So if $t$ satisfies the condition \eqref{lemma_6_1} which is equivalent to condition \eqref{proof_lemma_6_2}, condition \eqref{proof_lemma_6_1} will be satisfied which is equivalent to condition \eqref{lemma_6_2}.
\end{proof}

Now we present the proof of Propositions~\ref{proposition_1}-\ref{proposition_4}.

\subsection{Proof of Proposition \ref{proposition_1}}\label{sec:proof_pro_1}
Note that the difference $R_n(\mathbf{w}_m) - R_n(\mathbf{w}^*_n)$ can be written as
\begin{equation}\label{proof_pro_1_1}
\begin{split}
    R_n(\mathbf{w}_m) - R_n(\mathbf{w}^*_n) = &\ R_n(\mathbf{w}_m) - R_m(\mathbf{w}_m) + R_m(\mathbf{w}_m) - R_m(\mathbf{w}^*_m)\\
    & + R_m(\mathbf{w}^*_m) - R(\mathbf{w}^*) + R(\mathbf{w}^*) - R_n(\mathbf{w}^*_n).
\end{split}
\end{equation}
Notice that all samples are drawn independently in our algorithm and all the expectations are with respect to the corresponding training set. Therefore, for the training sets $S_m$, $S_n$ with $m < n$, where $S_m \subset S_n$, and their corresponding objective functions are $R_m(\textbf{w})$ and $R_n(\textbf{w})$, we have
\begin{equation*}
\mathbb{E}_{S_n}[R_n(\textbf{w})] = R(\textbf{w}),
\end{equation*}
\begin{equation*}
\mathbb{E}_{S_n}[R_m(\textbf{w})] = \mathbb{E}_{S_{n - m}}[\mathbb{E}_{S_m}[R_m(\textbf{w})]] = \mathbb{E}_{S_m}[R_m(\textbf{w})] = R(\textbf{w}),
\end{equation*}
where $R(\textbf{w})$ is the expected loss function. Computing the expectation of both sides in \eqref{proof_pro_1_1} and dismissing the notations of the corresponding training set implies that
\begin{equation}\label{proof_pro_1_2}
\begin{split}
\mathbb{E}[R_n(\mathbf{w}_m) - R_n(\mathbf{w}^*_n)] = &\ \mathbb{E}[ R_n(\mathbf{w}_m) - R_m(\mathbf{w}_m) ]+ \mathbb{E}[ R_m(\mathbf{w}_m) - R_m(\mathbf{w}^*_m)]\\
& + \mathbb{E}[ R_m(\mathbf{w}^*_m) - R(\mathbf{w}^*)] + \mathbb{E}[ R(\mathbf{w}^*) - R_n(\mathbf{w}^*_n)].
\end{split}
\end{equation}
Now note that 
\begin{equation*}
\mathbb{E}\left[R_n(\mathbf{w}_m) - R_m(\mathbf{w}_m)\right] = R(\mathbf{w}_m) - R(\mathbf{w}_m) = 0.
\end{equation*}
Moreover, since $\mathbf{w}_m$ solves the ERM problem $R_m$ within its statistical accuracy, we have 
\begin{equation*}
\mathbb{E}\left[R_m(\mathbf{w}_m) - R_m(\mathbf{w}^*_m)\right] \leq V_m.
\end{equation*}
Using \eqref{diff_function} and the fact that $n \geq m$ we can show that
\begin{equation*}
\mathbb{E}\left[R_m(\mathbf{w}^*_m) - R(\mathbf{w}^*)\right] \leq \mathbb{E}\left[|R_m(\mathbf{w}^*_m) - R(\mathbf{w}^*)|\right] \leq V_m,
\end{equation*}
\begin{equation*}
\mathbb{E}\left[R(\mathbf{w}^*) - R_n(\mathbf{w}^*_n)\right] \leq \mathbb{E}\left[|R_n(\mathbf{w}^*_n) - R(\mathbf{w}^*)|\right] \leq V_n \leq V_m.
\end{equation*}
Leveraging all the above inequalities and replacing the terms in \eqref{proof_pro_1_2} by their upper bounds implies 
\begin{equation*}
\mathbb{E}[R_n(\mathbf{w}_m) - R_n(\mathbf{w}^*_n)] \leq 3V_m
\end{equation*}
and the claim in \eqref{proposition_1_1} of Proposition~\ref{proposition_1} follows.

\subsection{Proof of Proposition \ref{proposition_2}}\label{sec:proof_pro_2}

Check Corollary 5.5 of \cite{qiujiang2020quasinewton1}.

\subsection{Proof of Proposition \ref{proposition_3}}\label{sec:proof_pro_3}

Recall that $R_n$ is strongly convex with $\mu$ and its gradient is smooth with $L$, Hence, we have
\begin{equation*}
\|\nabla^2{R_n(\mathbf{w}^*_n})^{\frac{1}{2}}(\mathbf{w}_m - \mathbf{w}^*_n)\| \leq \|\nabla^2{R_n(\mathbf{w}^*_n})\|^{\frac{1}{2}}\|\mathbf{w}_m - \mathbf{w}^*_n\| \leq \sqrt{\frac{2L}{\mu}\left[R_n(\mathbf{w}_m) - R_n(\mathbf{w}^*_n)\right]}.
\end{equation*}
By Proposition \ref{proposition_1} we know that
\begin{equation*}
\mathbb{E}\left[R_n(\mathbf{w}_m) - R_n(\mathbf{w}^*_n)\right] \leq 3V_m.
\end{equation*}
Using Jensen's inequality (notice that function $f(x) = \sqrt{x}$ is concave) we obtain
\begin{equation*}
\mathbb{E}\left[\|\nabla^2{R_n(\mathbf{w}^*_n})^{\frac{1}{2}}(\mathbf{w}_m - \mathbf{w}^*_n)\|\right] \leq \sqrt{\frac{2L}{\mu}\mathbb{E}\left[R_n(\mathbf{w}_m) - R_n(\mathbf{w}^*_n)\right]} \leq \sqrt{6\kappa V_m}.
\end{equation*}
Moreover, we assume that $V_m = \mathcal{O}(\frac{1}{m})$ under the condition that $m = \Omega(\kappa^2\log{d})$. So when $m$ is lower bounded by
\begin{equation*}
m = \Omega\left(\max\{\frac{6\kappa}{(\frac{1}{300})^2}, \kappa^2\log{d}\}\right) = \Omega\left(\max\{\kappa, \kappa^2\log{d}\}\right) = \Omega(\kappa^2\log{d}),
\end{equation*}
we can ensure that
\begin{equation*}
\mathbb{E}\left[\|\nabla^2{R_n(\mathbf{w}^*_n})^{\frac{1}{2}}(\mathbf{w}_m - \mathbf{w}^*_n)\|\right] \leq \sqrt{6\kappa V_m} \leq \frac{1}{300}.
\end{equation*}

\subsection{Proof of Proposition \ref{proposition_4}}\label{sec:proof_pro_4}

Notice that by triangle inequality we have
\begin{equation}\label{proof_proposition_4_1}
\begin{split}
    & \|\nabla^2{R_n(\mathbf{w}^*_n})^{-\frac{1}{2}}\left[\nabla^2{R_{m_0}(\mathbf{w}_{m_0})} - \nabla^2{R_n(\mathbf{w}^*_n})\right]\nabla^2{R_n(\mathbf{w}^*_n})^{-\frac{1}{2}}\|_{F}\\
    \leq & \|\nabla^2{R_n(\mathbf{w}^*_n})^{-\frac{1}{2}}\left[\nabla^2{R_{m_0}(\mathbf{w}_{m_0})} - \nabla^2{R_{n}(\mathbf{w}_{m_0}})\right]\nabla^2{R_n(\mathbf{w}^*_n})^{-\frac{1}{2}}\|_{F}\quad +\\
    \quad & \|\nabla^2{R_n(\mathbf{w}^*_n})^{-\frac{1}{2}}\left[\nabla^2{R_{n}(\mathbf{w}_{m_0})} - \nabla^2{R_n(\mathbf{w}^*_n})\right]\nabla^2{R_n(\mathbf{w}^*_n})^{-\frac{1}{2}}\|_{F}.
\end{split}
\end{equation}
Recall that $R_n$ is strongly convex with $\mu$ and thus $\|\nabla^2{R_n(\mathbf{w}^*_n})^{-\frac{1}{2}}\| \leq \sqrt{\frac{1}{\mu}}$. By \eqref{lemma_3_1} of Lemma~\ref{lemma_3} we derive that when $m_0 = \Omega(\log{d})$,
\begin{equation*}
\begin{split}
    & \mathbb{E}\left[\|\nabla^2{R_n(\mathbf{w}^*_n})^{-\frac{1}{2}}\left[\nabla^2{R_{m_0}(\mathbf{w}_{m_0})} - \nabla^2{R_{n}(\mathbf{w}_{m_0}})\right]\nabla^2{R_n(\mathbf{w}^*_n})^{-\frac{1}{2}}\|_F\right]\\
    \leq & \mathbb{E}\left[\|\nabla^2{R_n(\mathbf{w}^*_n})^{-\frac{1}{2}}\|^2\|\nabla^2{R_{m_0}(\mathbf{w}_{m_0})} - \nabla^2{R_{n}(\mathbf{w}_{m_0}})\|_{F}\right]\\
    \leq & \frac{1}{\mu}\mathbb{E}\left[\|\nabla^2{R_n}(\mathbf{w}_{m_0}) - \nabla^2{R_{m_0}}(\mathbf{w}_{m_0})\|_{F}\right]\\
    = & \mathcal{O}\left(\frac{L}{\mu}\sqrt{\frac{s\log{d}}{m_0}}\right)\\
    = & \mathcal{O}\left(\kappa\sqrt{\frac{s\log{d}}{m_0}}\right).
\end{split}
\end{equation*}
So when $m_0$ is lower bounded by
\begin{equation}\label{proof_proposition_4_2}
    m_0 = \Omega\left(\max\{\frac{\kappa^2s\log{d}}{(\frac{1}{14})^2}, \log{d}\}\right) = \Omega\left(\max\{\kappa^2s\log{d}, \log{d}\}\right) = \Omega(\kappa^2s\log{d}),
\end{equation}
we can ensure that,
\begin{equation}\label{proof_proposition_4_3}
    \mathbb{E}\left[\|\nabla^2{R_n(\mathbf{w}^*_n})^{-\frac{1}{2}}\left[\nabla^2{R_{m_0}(\mathbf{w}_{m_0})} - \nabla^2{R_{n}(\mathbf{w}_{m_0}})\right]\nabla^2{R_n(\mathbf{w}^*_n})^{-\frac{1}{2}}\|_F\right] = \mathcal{O}\left(\kappa\sqrt{\frac{s\log{d}}{m_0}}\right) \leq \frac{1}{14}.
\end{equation}
Using the same techniques in the proof of Proposition~\ref{proposition_3} we know that when $m_0$ is lower bounded by
\begin{equation}\label{proof_proposition_4_4}
m_0 = \Omega(\kappa^2\log{d}),
\end{equation}
we can ensure that
\begin{equation*}
\mathbb{E}\left[\|\nabla^2{R_n(\mathbf{w}^*_n})^{\frac{1}{2}}(\mathbf{w}_{m_0} - \mathbf{w}^*_n)\|\right] \leq \frac{1}{300}.
\end{equation*}
By Markov's inequality we know that
\begin{align*}
    & \mathbb{P}\left(\|\nabla^2{R_n(\mathbf{w}^*_n})^{\frac{1}{2}}(\mathbf{w}_{m_0} - \mathbf{w}^*_n)\| \leq \frac{1}{2}\right)\\
    & = 1 - \mathbb{P}\left(\|\nabla^2{R_n(\mathbf{w}^*_n})^{\frac{1}{2}}(\mathbf{w}_{m_0} - \mathbf{w}^*_n)\| \geq \frac{1}{2}\right)\\
    & \geq 1 - \frac{\mathbb{E}\left[\|\nabla^2{R_n(\mathbf{w}^*_n})^{\frac{1}{2}}(\mathbf{w}_{m_0} - \mathbf{w}^*_n)\|\right]}{{1}/{2}}\\
    & \geq 1 - \frac{{1}/{300}}{{1}/{2}} = \frac{149}{150}.
\end{align*}
This indicates that with high probability(w.h.p) of at least $149/150$ we get that
\begin{equation*}
    \|\nabla^2{R_n(\mathbf{w}^*_n})^{\frac{1}{2}}(\mathbf{w}_{m_0} - \mathbf{w}^*_n)\| \leq \frac{1}{2}.
\end{equation*}
Using \eqref{lemma_4_3} of Lemma~\ref{lemma_4} we have that with high probability
\begin{equation*}
\frac{\nabla^2{R_n(\mathbf{w}^*_n})}{1 + 6\|\nabla^2{R_n(\mathbf{w}^*_n})^{\frac{1}{2}}(\mathbf{w}_{m_0} - \mathbf{w}^*_n)\|} \preceq \nabla^2{R_n(\mathbf{w}_{m_0}}) \preceq (1 + 6\|\nabla^2{R_n(\mathbf{w}^*_n})^{\frac{1}{2}}(\mathbf{w}_{m_0} - \mathbf{w}^*_n)\|)\nabla^2{R_n(\mathbf{w}^*_n}).
\end{equation*}
Times the matrix $\nabla^2{R_n(\mathbf{w}^*_n})^{-\frac{1}{2}}$ on both sides we get that
\begin{equation*}
\frac{1}{1 + 6\|\nabla^2{R_n(\mathbf{w}^*_n})^{\frac{1}{2}}(\mathbf{w}_{m_0} - \mathbf{w}^*_n)\|}I \preceq \nabla^2{R_n(\mathbf{w}^*_n})^{-\frac{1}{2}}\nabla^2{R_n(\mathbf{w}_{m_0}})\nabla^2{R_n(\mathbf{w}^*_n})^{-\frac{1}{2}}, \quad w.h.p,
\end{equation*}
\begin{equation*}
\nabla^2{R_n(\mathbf{w}^*_n})^{-\frac{1}{2}}\nabla^2{R_n(\mathbf{w}_{m_0}})\nabla^2{R_n(\mathbf{w}^*_n})^{-\frac{1}{2}} \preceq (1 + 6\|\nabla^2{R_n(\mathbf{w}^*_n})^{\frac{1}{2}}(\mathbf{w}_{m_0} - \mathbf{w}^*_n)\|)I, \quad w.h.p.
\end{equation*}
So we achieve that
\begin{align*}
    & \|\nabla^2{R_n(\mathbf{w}^*_n})^{-\frac{1}{2}}[\nabla^2{R_n(\mathbf{w}_{m_0}}) - \nabla^2{R_n(\mathbf{w}^*_n})]\nabla^2{R_n(\mathbf{w}^*_n})^{-\frac{1}{2}}\|\\
    = & \|\nabla^2{R_n(\mathbf{w}^*_n})^{-\frac{1}{2}}\nabla^2{R_n(\mathbf{w}_{m_0}})\nabla^2{R_n(\mathbf{w}^*_n})^{-\frac{1}{2}} - I\|\\
    \leq & \max\{6\|\nabla^2{R_n(\mathbf{w}^*_n})^{\frac{1}{2}}(\mathbf{w}_{m_0} - \mathbf{w}^*_n)\|, 1 - \frac{1}{1 + 6\|\nabla^2{R_n(\mathbf{w}^*_n})^{\frac{1}{2}}(\mathbf{w}_{m_0} - \mathbf{w}^*_n)\|}\}\\
    = & 6\|\nabla^2{R_n(\mathbf{w}^*_n})^{\frac{1}{2}}(\mathbf{w}_{m_0} - \mathbf{w}^*_n)\|, \quad w.h.p.
\end{align*}
Hence we obtain that
\begin{equation}\label{proof_proposition_4_5}
\begin{split}
    & \|\nabla^2{R_n(\mathbf{w}^*_n})^{-\frac{1}{2}}[\nabla^2{R_n(\mathbf{w}_{m_0}}) - \nabla^2{R_n(\mathbf{w}^*_n})]\nabla^2{R_n(\mathbf{w}^*_n})^{-\frac{1}{2}}\|_F\\
    \leq & \sqrt{d}\|\nabla^2{R_n(\mathbf{w}^*_n})^{-\frac{1}{2}}[\nabla^2{R_n(\mathbf{w}_{m_0}}) - \nabla^2{R_n(\mathbf{w}^*_n})]\nabla^2{R_n(\mathbf{w}^*_n})^{-\frac{1}{2}}\|\\
    \leq & 6\sqrt{d}\|\nabla^2{R_n(\mathbf{w}^*_n})^{\frac{1}{2}}(\mathbf{w}_{m_0} - \mathbf{w}^*_n)\|, \quad w.h.p,
\end{split}
\end{equation}
where we use the fact that $\|A\|_{F} \leq \sqrt{d}\|A\|$ for any matrix $A \in \mathbb{R}^{d \times d}$. Because $\mathbf{w}^*_n$ is the optimal solution of the function $R_n(\mathbf{w})$ we know that $\nabla{R_n(\mathbf{w}^*_n)} = 0$. Recall that $w(t) = t -\ln{(1 + t)}$ and by \eqref{lemma_4_1} of Lemma~\ref{lemma_4} we get that
\begin{equation}\label{proof_proposition_4_6}
    w(\|\nabla^2{R_n(\mathbf{w}^*_n})^{\frac{1}{2}}(\mathbf{w}_{m_0} - \mathbf{w}^*_n)\|) \leq R_n(\mathbf{w}_{m_0}) - R_n(\mathbf{w}^*_n).
\end{equation}
By \eqref{lemma_4_2} of Lemma~\ref{lemma_4} and $\|\nabla^2{R_n(\mathbf{w}^*_n})^{\frac{1}{2}}(\mathbf{w}_{m_0} - \mathbf{w}^*_n)\| \leq \frac{1}{2}$ w.h.p, we obtain that with high probability
\begin{equation}\label{proof_proposition_4_7}
    w(\|\nabla^2{R_n(\mathbf{w}^*_n})^{\frac{1}{2}}(\mathbf{w}_{m_0} - \mathbf{w}^*_n)\|)
    \geq \frac{\|\nabla^2{R_n(\mathbf{w}^*_n})^{\frac{1}{2}}(\mathbf{w}_{m_0} - \mathbf{w}^*_n)\|^2}{2(1 + \|\nabla^2{R_n(\mathbf{w}^*_n})^{\frac{1}{2}}(\mathbf{w}_{m_0} - \mathbf{w}^*_n)\|)}
    \geq \frac{\|\nabla^2{R_n(\mathbf{w}^*_n})^{\frac{1}{2}}(\mathbf{w}_{m_0} - \mathbf{w}^*_n)\|^2}{3}.
\end{equation}
Combining \eqref{proof_proposition_4_5}, \eqref{proof_proposition_4_6} and \eqref{proof_proposition_4_7} we have that
\begin{align*}
    & \|\nabla^2{R_n(\mathbf{w}^*_n})^{-\frac{1}{2}}[\nabla^2{R_n(\mathbf{w}_{m_0}}) - \nabla^2{R_n(\mathbf{w}^*_n})]\nabla^2{R_n(\mathbf{w}^*_n})^{-\frac{1}{2}}\|_F\\
    \leq & 6\sqrt{d}\|\nabla^2{R_n(\mathbf{w}^*_n})^{\frac{1}{2}}(\mathbf{w}_{m_0} - \mathbf{w}^*_n)\|\\
    \leq & 6\sqrt{3}\sqrt{d}\sqrt{w(\|\nabla^2{R_n(\mathbf{w}^*_n})^{\frac{1}{2}}(\mathbf{w}_{m_0} - \mathbf{w}^*_n)\|)}\\
    \leq & 6\sqrt{3}\sqrt{d}\sqrt{R_n(\mathbf{w}_{m_0}) - R_n(\mathbf{w}^*_n)}, \quad w.h.p.
\end{align*}
By proposition \ref{proposition_1} we know that
\begin{equation*}
\mathbb{E}\left[R_n(\mathbf{w}_{m_0}) - R_n(\mathbf{w}^*_n)\right] \leq 3V_{m_0}.
\end{equation*}
Using Jensen's inequality (notice that function $f(x) = \sqrt{x}$ is concave) we get that
\begin{align*}
    & \mathbb{E}\left[\|\nabla^2{R_n(\mathbf{w}^*_n})^{-\frac{1}{2}}[\nabla^2{R_n(\mathbf{w}_{m_0}}) - \nabla^2{R_n(\mathbf{w}^*_n})]\nabla^2{R_n(\mathbf{w}^*_n})^{-\frac{1}{2}}\|_F\right]\\
    \leq & 6\sqrt{3}\sqrt{d}\sqrt{\mathbb{E}[R_n(\mathbf{w}_{m_0}) - R_n(\mathbf{w}^*_n)]}\\
    \leq & 18\sqrt{dV_{m_0}}.
\end{align*}
We assume that $V_{m_0} = \mathcal{O}(\frac{1}{m_0})$ under the condition that $m_0 = \Omega(\kappa^2\log{d})$. So when $m_0$ is lower bounded by
\begin{equation}\label{proof_proposition_4_8}
    m_0 = \Omega\left(\max\{\frac{18^2d}{(\frac{1}{14})^2}, \kappa^2\log{d}\}\right) = \Omega\left(\max\{d, \kappa^2\log{d}\}\right),
\end{equation}
we can ensure that
\begin{equation}\label{proof_proposition_4_9}
    \mathbb{E}\left[\|\nabla^2{R_n(\mathbf{w}^*_n})^{-\frac{1}{2}}[\nabla^2{R_n(\mathbf{w}_{m_0}}) - \nabla^2{R_n(\mathbf{w}^*_n})]\nabla^2{R_n(\mathbf{w}^*_n})^{-\frac{1}{2}}\|_F\right] \leq 18 \sqrt{d V_{m_0}} \leq \frac{1}{14}.
\end{equation}
Leveraging \eqref{proof_proposition_4_1}, \eqref{proof_proposition_4_2}, \eqref{proof_proposition_4_3}, \eqref{proof_proposition_4_4}, \eqref{proof_proposition_4_8} and \eqref{proof_proposition_4_9} and using the fact that $s \geq 1$ we know that when the initial sample size $m_0$ is lower bounded by
\begin{equation*}
m_0 = \Omega\left(\max\left\{d, \kappa^2\log{d}, \kappa^2s\log{d}\right\}\right) = \Omega\left(\max\left\{d, \kappa^2s\log{d}\right\}\right),
\end{equation*}
we have that
\begin{equation*}
\begin{split}
    & \mathbb{E}\left[\|\nabla^2{R_n(\mathbf{w}^*_n})^{-\frac{1}{2}}\left[\nabla^2{R_{m_0}(\mathbf{w}_{m_0})} - \nabla^2{R_n(\mathbf{w}^*_n})\right]\nabla^2{R_n(\mathbf{w}^*_n})^{-\frac{1}{2}}\|_{F}\right]\\
    \leq & \mathbb{E}[\left[\|\nabla^2{R_n(\mathbf{w}^*_n})^{-\frac{1}{2}}\left[\nabla^2{R_{m_0}(\mathbf{w}_{m_0})} - \nabla^2{R_{n}(\mathbf{w}_{m_0}})\right]\nabla^2{R_n(\mathbf{w}^*_n})^{-\frac{1}{2}}\|_{F}\right] \quad +\\
    \quad & \mathbb{E}[\left[\|\nabla^2{R_n(\mathbf{w}^*_n})^{-\frac{1}{2}}\left[\nabla^2{R_{n}(\mathbf{w}_{m_0})} - \nabla^2{R_n(\mathbf{w}^*_n})\right]\nabla^2{R_n(\mathbf{w}^*_n})^{-\frac{1}{2}}\|_{F}\right]\\
    \leq & \frac{1}{14} + \frac{1}{14}\\
    = & \frac{1}{7}.
\end{split}
\end{equation*}
So the proof of Proposition~\ref{proposition_4} is complete.\\

Finally we present the proof of the main theorem of the paper.

\subsection{Proof of Theorem \ref{theorem_1}}

Suppose we are at the stage with $n=2m$ samples and the initial iterate $\mathbf{w}_m$ is within the statistical accuracy of $R_m$. Further, suppose the size of initial set $m_0$ satisfies~\eqref{theorem_1_1} and by $s \geq 1$ we have
\begin{equation*}
m_0 = \Omega(\kappa^2\log{d}),
\end{equation*}
hence by Proposition~\ref{proposition_3} and \ref{proposition_4} the conditions in \eqref{proposition_2_1} are satisfied in expectation. If we define $\mathbf{w}_n$ as the output of the process after running $t_n$ updates of BFGS on $R_n$ with step size $1$, then Proposition \ref{proposition_2} implies that
\begin{equation*}
\mathbb{E}[R_n(\mathbf{w}_n) - R_n(\mathbf{w}^*_n)] \leq 1.1 \ \left(1/{t_n}\right)^{t_n}\mathbb{E}[R_n(\mathbf{w}_m) - R_n(\mathbf{w}^*_n)].
\end{equation*}
Moreover, if we set $n=2m$ by Proposition \ref{proposition_1} we obtain $
\mathbb{E}[R_n(\mathbf{w}_m) - R_n(\mathbf{w}^*_n)] \leq 3V_m$. Combining these two inequalities implies that
\begin{equation}\label{proof_thm_1_1}
    \mathbb{E}[R_n(\mathbf{w}_n) - R_n(\mathbf{w}^*_n)] \leq 3.3\ V_m \left(1/{t_n}\right)^{t_n}.
\end{equation}
Our goal is to ensure that the output iterate $\mathbf{w}_n$ reaches the statistical accuracy of $R_n$ and satisfies
$\mathbb{E}[R_n(\mathbf{w}_n) - R_n(\mathbf{w}^*_n)] \leq V_n$. This condition is indeed satisfied if the upper bound in \eqref{proof_thm_1_1} is smaller than $V_n$ and we have 
\begin{equation}\label{proof_thm_1_2}
     3.3\  V_m \left(1/{t_n}\right)^{t_n} \leq V_n.
\end{equation}
As $V_n = \mathcal{O}({1}/{n})$(since $n \geq m_0 = \Omega(\kappa^2\log{d})$) and $n = 2m$, condition (\ref{proof_thm_1_2}) is equivalent to
$\left({1}/{t_n}\right)^{t_n} \leq ({1}/{6.6}) $. It can be easily verified that this condition holds if $t_n$ satisfies (see Lemma~\ref{lemma_6}):
\begin{equation}\label{proof_thm_1_3}
t_n \geq 1 + \ln{(6.6)}.
\end{equation}
The expression in \eqref{proof_thm_1_3} shows that, in the stage that the number of active samples is $n$, we need to run BFGS for at most $t_n =3$ iterations (since $3 \geq 1 + \ln(6.6))$. Note that this threshold is independent of $n$ or any other parameters. Hence, the cost of running BFGS updates at the phase that we have $n$ samples in the training set is 
\begin{equation*}
n t_n \tau_{grad} + t_n \tau_{prod}= 3 (n\tau_{grad}+\tau_{prod}),
\end{equation*}
where the first term corresponds to the cost of computing $n$ gradients for $t_n$ iterations and the second term corresponds to the cost of computing $t_n$ matrix-vector products for updating the Hessian inverse approximation. To compute the overall cost of AdaQN, we must sum up the cost of each phase from the initial training set with $m_0$ samples up to the last phase that we have $N$ samples in the active training set. 
For simplicity we assume that the total number of samples $N$ and the initial sample set $m_0$ satisfy the condition $N = 2^q m_0$ where $q \in \mathbb{Z}$. Hence, the computational cost of solving ERM problems from $m_0$ samples to $N$ samples is 
\begin{align*}
 & \sum_{k=0}^q  [3 (2^k m_0 \tau_{grad}+\tau_{prod})]\\
 \leq &3\left[ \left(m_0 \tau_{grad}   2^{q+1}\right)  + (q+1)
\tau_{prod}\right]= 6N  \tau_{grad} +3 \left(1+\log \left(\frac{N}{m_0}\right)\right)\tau_{prod}
\end{align*}
Further, we need to compute $m_0$ Hessians and one matrix inversion at the first stage, when we compute $\mathbf{H}_{m_0} = \nabla^2{R_{m_0}(\mathbf{w}_{m_0})}^{-1}$. By combining these costs, the claim in Theorem \ref{theorem_1} follows.

\section{Analysis of parameter s}\label{sec:analysis_of_s}

As we discussed in the paper, the lower bound on the size of initial training set $m_0$, depends on the parameter $s$ which is formally defined as
\begin{equation*}
s = \sup_{\mathbf{w}, n}\left(\frac{\mathbb{E}\left[\|\nabla^2{R_n}(\mathbf{w}) - \nabla^2{R}(\mathbf{w})\|_F\right]}{\mathbb{E}\left[\|\nabla^2{R_n}(\mathbf{w}) - \nabla^2{R}(\mathbf{w})\|\right]}\right)^2.
\end{equation*}
This parameter could be as small as $1$ in the best case scenario, and as large as $d$ in the worst case scenario. This parameter depends heavily on the variation/variance of the input features for linear models. To better illustrate this point, we consider a simple case where the input vectors are $\mathbf{x}=[x_1,\dots,x_d]$ and we use a linear regression model for our prediction function and a quadratic loss function.\footnote{Case of the logistic regression is similar.} In this case, the empirical risk $R_n(\mathbf{w})$ defined by the samples $\{\mathbf{x}^{(k)},y^{(k)}\}_{k=1}^n$ and the expected risk are given by
\begin{equation*}
R_n(\mathbf{w})=\frac{1}{2n}\sum_{k=1}^n (y^{(k)}- \mathbf{w}^\top  \mathbf{x}^{(k)})^2,\qquad
R(\mathbf{w})=\frac{1}{2}\mathbb{E}_{(\mathbf{x},y)} [(y- \mathbf{w}^\top  \mathbf{x})^2].
\end{equation*}
And the Hessians $\nabla^2{R_n}(\mathbf{w})$ and $\nabla^2{R}(\mathbf{w})$ are given by
\begin{equation*}
\nabla^2{R_n}(\mathbf{w}) = \frac{1}{n}\sum_{k=1}^n \mathbf{x}^{(k)}{\mathbf{x}^{(k)}}^\top, \qquad \nabla^2{R}(\mathbf{w}) = \mathbb{E}_{(\mathbf{x},y)}[ \mathbf{x}\mathbf{x}^\top ].
\end{equation*}
To understand the behavior of the ratio $s$ we need to study the gap between these two matrices. Note that  the $(i,j)$ element of these matrices are given by
\begin{equation*}
\nabla^2{R_n}(\mathbf{w})[i,j]= \frac{1}{n}\sum_{k=1}^n x_i^{(k)}x_j^{(k)}, \qquad
\nabla^2{R}(\mathbf{w})[i,j]=\mathbb{E}[x_ix_j],
\end{equation*}
where $x_i^{(k)}$ denotes the $i$-th element of the $k$-th sample point. Without loss of generality we assume that the correlation between two different features, i.e., $ \mathbb{E}[x_ix_j]$ for $i\neq j$, is much smaller than the second moment of each feature $\mathbb{E}[x_i^2]$. Thus, we can focus on the diagonal components of these two matrices only. In that case, the $i$-th component on the diagonal of the difference matrix $\nabla^2{R_n}(\mathbf{w})-\nabla^2{R}(\mathbf{w})$ is given by
\begin{equation*}
e_i:=\frac{1}{n}\sum_{k=1}^n (x_i^{(k)})^2-\mathbb{E}[x_i^2],
\end{equation*}
which is equivalent to the concentration error of the random variable $x_i^2$. By the definition of Frobenius norm and $l_2$ norm we know that $s$ is approximately
\begin{equation*}
s\approx \frac{\sum_{i = 1}^d e^2_i}{\max_{i = 1,\dots,d} e^2_i}.
\end{equation*}
Now if different features of the input vector $\mathbf{x}$ have similar range and second moments, we would expect the parameters $e_i$ to be close to each other. In this case, the Frobenius norm of the difference matrix $\nabla^2{R_n}(\mathbf{w})-\nabla^2{R}(\mathbf{w})$ is almost $\sqrt{d}$ of its operator norm and hence $s\approx d$. However, if the variance and range of different elements of the feature vector $\mathbf{x}$ are substantially different from each other, $s$ would be much smaller than $d$. For instance, when all the $e_i^2$ coordinate in the same range, except that $\max_{i } e_i^2$ is much larger than the rest then the Frobenius norm and operator norm of  the difference matrix $\nabla^2{R_n}(\mathbf{w})-\nabla^2{R}(\mathbf{w})$ are very close to each other and therefore $s=O(1)$. In this case, $s$ does not scale with $d$. For many datasets, the second scenario holds, since we often observe that the matrix of difference $\nabla^2{R_n}(\mathbf{w})-\nabla^2{R}(\mathbf{w})$ has several eigenvalues with small absolute value  and a few eigenvalues with large absolute value. 

\begin{figure}[t!]
  \centering
    \includegraphics[width=0.50\linewidth]{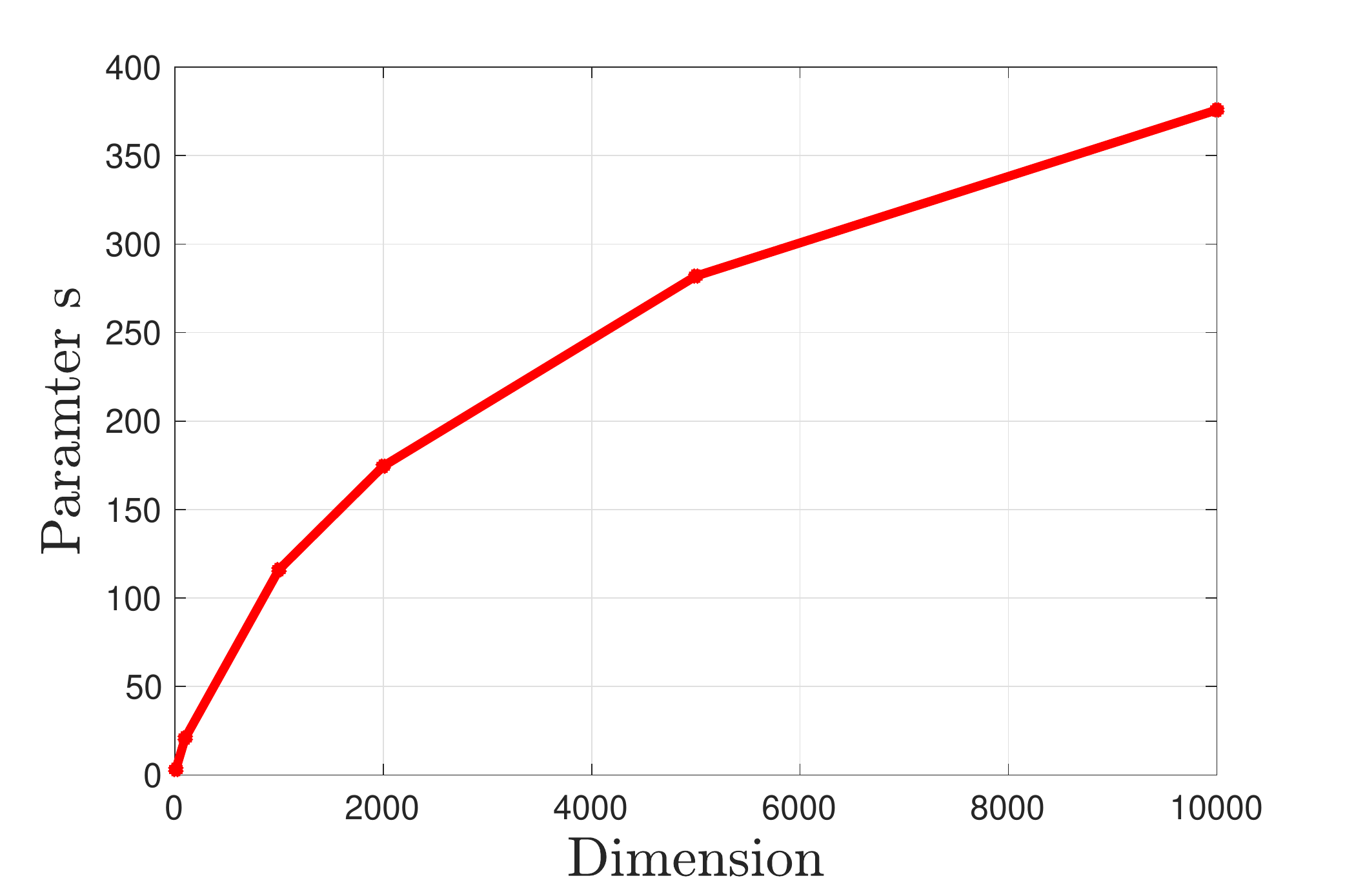}
    \hspace{-4mm}
    \includegraphics[width=0.50\linewidth]{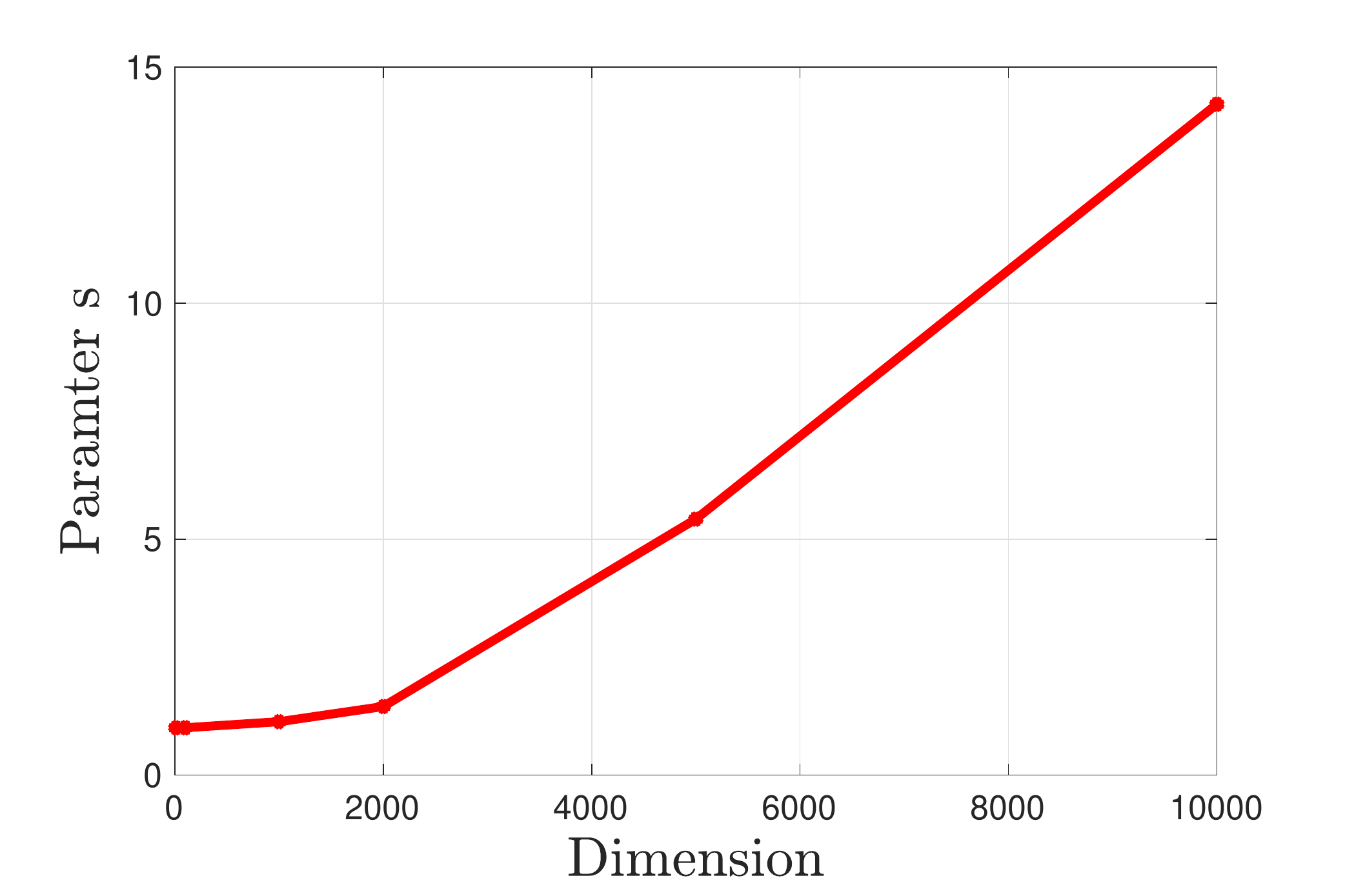}
  \vspace{-1mm}
  \caption{Parameter $s$ versus dimension $d$. The left plot corresponds to the case that all features have the same variance and the right plot corresponds to the case that the variance of one feature is much larger than the rest of the features.}
  \vspace{-4mm}
  \label{parameter_s}
\end{figure}

To better quantify the parameter $s$, we conduct the following numerical experiment. We generate the feature vector randomly $\mathbf{x}=[x_1,\dots,x_d]$ where $x_i$ are independent random variables from normal distribution. We generate $n = 1000$ samples to compute the empirical risk Hessian $\nabla^2{R_n}(\mathbf{w}) = \frac{1}{n}\sum_{k=1}^n \mathbf{x}^{(k)}{\mathbf{x}^{(k)}}^\top$. We consider two cases. In the first case, we generate all the features according to the distribution $x_i \sim \mathcal{N}(0,1)$, and thus the expected risk Hessian is $\nabla^2{R}(\mathbf{w}) = I$. In the second case, we generate the first coordinate according to $x_1 \sim \mathcal{N}(0,100)$ and the rest of the coordinates are generated based on $x_i \sim \mathcal{N}(0,1)$. In this case,  we have $\nabla^2{R}(\mathbf{w}) = D$ where $D \in \mathbb{R}^{d \times d}$ is a diagonal matrix with $D(1, 1) = 100$ and $D(i, i) = 1$ for $2 \leq i \leq d$. 

We plot the results in Figure~\ref{parameter_s} to present how parameter $s$ behaves as the dimension $d$ grows for these two scenarios. As shown in the right plot of the Figure~\ref{parameter_s}, the parameter $s$ always stays very small, in the second scenario, no matter how large the dimension $d$ is. This case corresponds to the second scenario where the variance of one of the features is much larger than the rest of the features and therefore, the Frobenius norm and operator norm of  the difference matrix $\nabla^2{R_n}(\mathbf{w})-\nabla^2{R}(\mathbf{w})$ are very close to each other. So in the best case, $s$ is almost independent of the scale of dimension $d$. Note that in this case for $d=10,000$, the parameter $s$ is $14$, which shows that $s=O(1)$.

The left plot of Figure~\ref{parameter_s} shows that even in the worst case, where all features have similar variances/variations (the first scenario), the parameter $s$ could still be much smaller than the problem dimension $d$. Indeed, in this case, $s$ is larger than the previous studied case. However, it is still much smaller than dimension $d$ and it does not grow linearly with the dimension. In fact, for $d=10,000$, the value of $s$ is $s=380$ which is much smaller than $d$. 

The above toy example showed that often parameter $s$ is much smaller than the problem dimension $d$. Next, we try to numerically identify the value of $s$ for the four datasets that we considered in our numerical experiments. Note that, in this case, we are not able to compute the Hessian of expected risk exactly as the underlying probability distribution is unknown. Instead, we approximate that by the Hessian of an empirical risk that is defined by a very large number of samples. Specifically, for each dataset, we use the Hessian of the ERM problem computed by all available samples $\nabla^2{R_N}(\mathbf{w})$ in the training set as an approximation of the  expected risk Hessian $\nabla^2{R}(\mathbf{w})$. Hence,  we compute the parameter $s$ by looking at the ratio of the Frobenius norm and operator norm of  the difference matrix $\nabla^2{R_{m_0}}(\mathbf{w})-\nabla^2{R}_N(\mathbf{w})$. The results are summarized in Table~\ref{tab_3}. As we observe, the parameter $s$ for all datasets stays close to $1$ and is not within the same order as the problem dimension $d$. In fact, for all considered datasets, $s$ is less than $10$, while dimensions are much larger. 

Leveraging the results from numerical experiments in Figure~\ref{parameter_s} and Table~\ref{tab_3}, we argue that in most cases we are in the regime that $s = \mathcal{O}(1)$ and is independent of dimension $d$. Now recall that our lower bound on $m_0$ is  $\Omega\left(\max\left\{d, \kappa^2s\log{d}\right\}\right)$. These observations imply that, the worst theoretical lower bound of $\Omega\left(\kappa^2d\log{d}\right)$ (where $s=O(d)$) rarely holds in practice, and in most common cases, we can assume that the lower bound on the initial sample size $m_0$ is $\Omega\left(\max\left\{d, \kappa^2\log{d}\right\}\right)$. 

\begin{table}[t]
  \caption{Parameter $s$ for different datasets.}
  \vspace{1mm}
  \centering
  \begin{tabular}{ |c|c|c|c|c| }
    \hline
    Dataset & MNIST & GISETTE & Orange & Epsilon \\
    \hline
    Dimension $d$ & 784 & 5000 & 14000 & 2000 \\
    \hline
    Parameter $s$ & 1.6 & 4.43 & 6.49 & 2.56 \\
    \hline
  \end{tabular}
  \label{tab_3}
  \vspace{-4mm}
\end{table}

{{
\bibliography{bmc_article.bib}
\bibliographystyle{abbrvnat}
}}

\end{document}